\documentclass{amsart}

\usepackage{amssymb,mathrsfs,amscd,graphicx, array}
\allowdisplaybreaks
\usepackage{comment}
\usepackage[nocompress, noadjust]{cite}  

\usepackage[shortlabels]{enumitem}

\usepackage{mathtools}

\usepackage[OT2,T1]{fontenc}
\DeclareSymbolFont{cyrletters}{OT2}{wncyr}{m}{n}
\DeclareMathSymbol{\Sha}{\mathalpha}{cyrletters}{"58}

\makeatletter
\newcommand*{\Relbarfill@}{\arrowfill@\Relbar\Relbar\Relbar}
\newcommand*{\xeq}[2][]{\ext@arrow 0055\Relbarfill@{#1}{#2}}
\makeatother

\usepackage{tikz}
\usetikzlibrary{matrix,arrows,cd}

\usepackage[
  pdfencoding=auto, 
  psdextra,
  hypertexnames=false,
]{hyperref}
\usepackage{letltxmacro}
\LetLtxMacro{\oldsqrt}{\sqrt}
\renewcommand{\sqrt}[2][]{\,\oldsqrt[#1]{#2}\,}

\makeatletter
\def\@tocline#1#2#3#4#5#6#7{\relax
  \ifnum #1>\c@tocdepth 
  \else
    \par \addpenalty\@secpenalty\addvspace{#2}%
    \begingroup \hyphenpenalty\@M
    \@ifempty{#4}{%
      \@tempdima\csname r@tocindent\number#1\endcsname\relax
    }{%
      \@tempdima#4\relax
    }%
    \parindent\z@ \leftskip#3\relax \advance\leftskip\@tempdima\relax
    \rightskip\@pnumwidth plus4em \parfillskip-\@pnumwidth
    #5\leavevmode\hskip-\@tempdima
      \ifcase #1
       \or\or \hskip 1em \or \hskip 2em \else \hskip 3em \fi%
      #6\nobreak\relax
    \dotfill\hbox to\@pnumwidth{\@tocpagenum{#7}}\par
    \nobreak
    \endgroup
  \fi}
\makeatother

\usepackage{bbm}
\makeatletter

\def\greekbolds#1{%
 \@for\next:=#1\do{%
    \def\X##1;{%
     \expandafter\def\csname V##1\endcsname{\boldsymbol{\csname##1\endcsname}}
     }
   \expandafter\X\next;
  }
}

\greekbolds{alpha,beta,iota,gamma,lambda,nu,eta,Gamma,varsigma}

\def\make@bb#1{\expandafter\def
  \csname bb#1\endcsname{{\mathbb{#1}}}\ignorespaces}

\def\make@bbm#1{\expandafter\def
  \csname bb#1\endcsname{{\mathbbm{#1}}}\ignorespaces}

\def\make@bf#1{\expandafter\def\csname bf#1\endcsname{{\bf
      #1}}\ignorespaces} 

\def\make@gr#1{\expandafter\def
  \csname gr#1\endcsname{{\mathfrak{#1}}}\ignorespaces}

\def\make@scr#1{\expandafter\def
  \csname scr#1\endcsname{{\mathscr{#1}}}\ignorespaces}

\def\make@cal#1{\expandafter\def\csname cal#1\endcsname{{\mathcal
      #1}}\ignorespaces} 

\def\do@Letters#1{#1A #1B #1C #1D #1E #1F #1G #1H #1I #1J #1K #1L #1M
                 #1N #1O #1P #1Q #1R #1S #1T #1U #1V #1W #1X #1Y #1Z}
\def\do@letters#1{#1a #1b #1c #1d #1e #1f #1g #1h #1i #1j #1k #1l #1m
                 #1n #1o #1p #1q #1r #1s #1t #1u #1v #1w #1x #1y #1z}
\do@Letters\make@bb   \do@letters\make@bbm
\do@Letters\make@cal  
\do@Letters\make@scr 
\do@Letters\make@bf \do@letters\make@bf   
\do@Letters\make@gr   \do@letters\make@gr
\makeatother

\newcommand{\wt}{\widetilde}

\newcommand{\Lsymb}[2]{\genfrac{(}{)}{}{}{#1}{#2}}  

\DeclareMathSymbol{\twoheadrightarrow} {\mathrel}{AMSa}{"10}

\DeclareMathOperator{\id}{id}

\DeclareMathOperator{\pr}{pr}
\DeclareMathOperator{\rk}{rank}

\DeclareMathOperator{\diag}{diag}

\DeclareMathOperator{\Ann}{Ann}

\DeclareMathOperator{\Bil}{Bil}
\DeclareMathOperator{\Latt}{Latt}

\DeclareMathOperator{\Aut}{Aut}
\DeclareMathOperator{\End}{End}
\DeclareMathOperator{\Hom}{Hom}

\DeclareMathOperator{\Gal}{Gal}
\DeclareMathOperator{\Mat}{Mat}

\DeclareMathOperator{\Tr}{Tr}
\DeclareMathOperator{\Nm}{N}  

\DeclareMathOperator{\GL}{GL}

\DeclareMathOperator{\SO}{SO}

\newcommand{\Gin}{\Gamma_{\mathrm{in}}}
\newcommand{\Grs}{\Gamma_{\mathrm{rs}}}
\newcommand{\Pin}{\mathscr{P}_\mathrm{in}}

\def\Fpbar{\overline{\bbF}_p}
\def\Fp{{\bbF}_p}
\def\Fq{{\bbF}_q}

\newcommand{\<}{\langle}   
\renewcommand{\>}{\rangle} 

\newcommand{\isoto}{\stackrel{\sim}{\longrightarrow}}
\newcommand{\embed}{\hookrightarrow}

\newcommand{\Z}{\mathbb Z}
\newcommand{\Q}{\mathbb Q}

\newcommand{\F}{\mathbb F}

\newcounter{thmcounter} 
\numberwithin{thmcounter}{section}  
\newtheorem{thm}[thmcounter]{Theorem}
\newtheorem{lem}[thmcounter]{Lemma}
\newtheorem{cor}[thmcounter]{Corollary}
\newtheorem{prop}[thmcounter]{Proposition}

\theoremstyle{definition}
\newtheorem{defn}[thmcounter]{Definition}
\newtheorem{ex}[thmcounter]{Example}

\newtheorem{rem}[thmcounter]{Remark}

\numberwithin{equation}{section}
\numberwithin{figure}{section}
\numberwithin{table}{section}

\newtheoremstyle{notitle}  
  {}
  {}
  {\itshape}
  {}
  {}
  {\ }
  {.5em}
  {}
\theoremstyle{notitle}

\title[Superspecial abelian varieties]{Polarized superspecial abelian varieties over $\Fp$ via hermitian lattices}

 \author{Yucui Lin}

\address{(Lin) School of Mathematics and Statistics, Wuhan University, Luojiashan, 430072, Wuhan, Hubei, P.R. China}   

\email{yucui.lin@whu.edu.cn}

 \author{Jiangwei Xue}

\address{(Xue) School of
  Mathematics and Statistics, Wuhan University, Luojiashan, 430072,
  Wuhan, Hubei, P.R. China}

\email{xue\_j@whu.edu.cn}

\author{Chia-Fu Yu}

\address{(Yu) Institute of Mathematics,
  Academia Sinica and NCTS,  No. 1, Sec. 4, Roosevelt Road, Taipei 10617, TAIWAN.}
 \email{chiafu@math.sinica.edu.tw}

\begin{document}
\date{\today} 
\subjclass[2020]{11G10, 11E39} 
\keywords{superspecial abelian varieties, hermitian forms, lattices over Bass orders.}

\begin{abstract}
   We study the set of isomorphism classes of polarized superspecial abelian varieties $(A,\lambda)$ of a 
   fixed dimension over $\mathbb{F}_p$ with Frobenius endomorphism $\pi_A=\sqrt{-p}$ and $\ker \lambda =\ker \pi_A$. This set plays an important role in the geometry of the supersingular locus, and the generalizations of Deuring's $2T-H$ Theorem by Ibukiyama and Katsura. We determine  when this set is nonempty and classify its genera. Our method reduces the problems of superspecial abelian varieties to those of certain hermitian lattices by the lattice description established by Jordan et.~al and Ibukiyama--Karemaker--Yu, and we treat these problems on the lattices concerned by arithmetic methods.    
\end{abstract}

\maketitle

\section{Introduction}
Throughout this paper, we fix a prime number $p$. Let $n\ge 1$ be a positive integer and $\Lambda_n$ be the set of isomorphism classes of
$n$-dimensional principally 
polarized superspecial abelian varieties $(A,\lambda)$ over
$\Fpbar$, called the \emph{principal genus} of $n$-dimensional 
polarized superspecial abelian varieties. Recall that an abelian variety over $\Fpbar$ is said to be 
\emph{superspecial} if
it is isomorphic to a product of supersingular elliptic curves. 
The Galois group $\Gamma:=\Gal(\Fpbar/\Fp)$ acts stably on the finite set 
$\Lambda_n$ and we denote by 
\[ \Lambda_n(\Fp):=\Lambda_n^\Gamma \]
 the subset
of fixed elements. Then $\Lambda_n(\Fp)\subseteq \Lambda_n$ 
is the subset consisting of all members
$(A,\lambda)\in \Lambda_n$ that admit a model defined over
$\Fp$ (\cite{Ibukiyama-Katsura-1994}, cf.~\cite[Lemma 5.4]{Yu:Indiana-ArithSSlocus}). 

In \cite{Ibukiyama-Katsura-1994} Ibukiyama and Katsura introduce the \emph{type number}
of $\Lambda_n$, denoted $T(\Lambda_n)$, and prove the relation
\begin{equation}
  \label{eq:1.1}
  2T(\Lambda_n)-|\Lambda_n|=|\Lambda_n(\Fp)|.
\end{equation}
They further produce an explicit formula for $|\Lambda_2(\Fp)|$. 
When $n=1$, $\Lambda_1$ reduces to the set of isomorphism classes
of supersingular elliptic curves, and $T(\Lambda_1)$ recovers the type
number of the definite quaternion $\Q$-algebra $B_{p,\infty}$ ramified precisely
at $p$ and $\infty$, in which case Equation~\eqref{eq:1.1} is a classical result
of Deuring \cite{Deuring1950}.

Assume that $n$ is even, and let $\Sigma_n$ denote the set of isomorphism
classes of $n$-dimensional polarized superspecial abelian varieties $(A,\lambda)$ over
$\Fpbar$ such that $\ker \lambda=A[F]$. The set $\Sigma_n$ is  called the \emph{non-principal genus}
of $n$-dimensional  polarized superspecial abelian varieties. Here
$F:A\to A^{(p)}:=X\otimes_{\Fpbar, {\rm Frob}} \Fpbar$ is the relative
Frobenius morphism and $A[F]$ denotes the kernel of $F$. Note that the assumption $n$ being even is necessary for $\Sigma_n$ to be non-empty, as the condition $\ker \lambda=A[F]$ implies that $n=\dim A$ is even by the Riemann-Roch Theorem \cite[\S16]{mumford:av}. 
As shown by Li and Oort~\cite{li-oort}, the set $\Sigma_n$ parametrizes irreducible
components of the moduli space $\calS_n$ over $\Fpbar$ of
$n$-dimensional principally polarized supersingular abelian varieties. The set $\Sigma_n(\Fp)$ of fixed elements by $\Gamma$ then parametrizes those irreducible
components that are defined over $\Fp$. 

Ibukiyama~\cite{Ibukiyama:Osaka2018} proves the same relation
for the non-principal genus set $\Sigma_n$: 
\begin{equation}
  \label{eq:1.2}
  2T(\Sigma_n)-|\Sigma_n|=|\Sigma_n(\Fp)|.
\end{equation} 
In~\cite{Ibukiyama:tohoku2019} he also produces an explicit formula for $T(\Sigma_2)$ by Asai's class number formula for $\SO_5$~\cite{Asai}, and hence obtains an explicit formula for $|\Sigma_2(\Fp)|$ from the relation~\eqref{eq:1.2}.  

In~\cite{Yu:Indiana-ArithSSlocus} the last named author provides a geometric and adelic approach for proving the results \eqref{eq:1.1} and~\eqref{eq:1.2} 
by describing the Galois action on the sets $\Lambda_n$ and
$\Sigma_n$, which can be regarded as an explicit reciprocity law. This approach
requires the existence of a base point $(A_0,\lambda_0)$ defined over
$\Fp$, which is obvious for the principal genus, but not so clear for the
non-principal genus. As a result, he establishes the relation \eqref{eq:1.1} unconditionally, and also for \eqref{eq:1.2} under the existence assumption of a non-principal superspecial point $(A_0,\lambda_0)$ over
$\Fp$ whose Frobenius endomorphism $\pi_{A_0}$ satisfies $\pi_{A_0}^2=-p$. Later he~\cite{yu:field-of-def-2017} shows the existence of a non-principal superspecial point $(A'_0,\lambda'_0)$ over $\Fp$ whose Frobenius endomorphism $\pi_{A_0'}$ satisfies $(\pi_{A_0'})^2=p$ and hence establishes the relation~\eqref{eq:1.2} unconditionally as well.  
However, the problem for the existence of such a non-principal superspecial abelian variety $(A_0,\lambda_0)$ over $\Fp$ with $\pi_{A_0}^2=-p$ remains open up to now. \\

The first main result of this paper is to determine when there is an object $(A,\lambda)$ in $\Sigma_n(\Fp)$ that admits a model over $\Fp$ with Frobenius endomorphism $\sqrt{-p}$. 
Let us denote the set of such members $(A,\lambda)\in \Sigma_n(\Fp)$ by $\Sigma_n(\sqrt{-p})$. 
To study the above problem, we consider the set 
$\wt\Sigma_n(\sqrt{-p})$ of $\Fp$-isomorphism classes of polarized superspecial abelian varieties $(A,\lambda)$ of dimension $n$ over $\Fp$ such that $\pi_{A}^2=-p$ and $\ker \lambda=A[\pi_A]$.
Note that $\Sigma_n(\sqrt{-p})$ is non-empty if and only if $\wt\Sigma_n(\sqrt{-p})$ is non-empty, as the natural map $\wt\Sigma_n(\sqrt{-p})\to \Sigma_n(\sqrt{-p})$ is surjective; see~\cite[Lemma~2.1]{yu:field-of-def-2017}. Thus, the problem is reduced to classifying the members in $\wt\Sigma_n(\sqrt{-p})$.

We say that two polarized abelian varieties $(A_1,\lambda_1)$ and $(A_2,\lambda_2)$ over a field $k$ are \emph{isogenous}, denoted $(A_1,\lambda_1)\sim (A_2, \lambda_2)$, if there is a quasi-isogeny $\alpha:A_1 \to A_2$ over $k$ such that $\alpha^*\lambda_2=\lambda_1$.

\begin{thm}\label{thm:main.1} For any positive even integer $n$, we have $\wt\Sigma_n(\sqrt{-p})=\emptyset$ if and only if $p\equiv 7\pmod{8}$ and $n\equiv 2\pmod{4}$. In other words,
the set $\wt\Sigma_n(\sqrt{-p})$ is non-empty if and only if one of the following mutually exclusive conditions holds:
\begin{itemize}
    \item [{\rm (a)}] $p\not \equiv 3 \pmod 4$;
    \item [{\rm (b)}] $p\equiv 3 \pmod 4$ and $4|n$;
    \item [{\rm (c)}] $p\equiv 3 \pmod 8$ and $n\equiv 2 \pmod 4$.
\end{itemize}   
Moreover, in each case, any two members $(A_1,\lambda_1)$ and $(A_2, \lambda_2)$ in $\wt\Sigma_n(\sqrt{-p})$ are isogenous.
\end{thm}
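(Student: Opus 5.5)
We translate the problem into one about hermitian lattices and then solve it prime by prime. Let $\pi=\sqrt{-p}$, $K=\Q(\pi)$, and let $R=\Z[\pi]$. By the lattice description of Jordan et al.\ and Ibukiyama--Karemaker--Yu, the category of superspecial abelian varieties over $\Fp$ with $\pi_A^2=-p$ is equivalent to the category of finitely generated torsion-free modules $M$ over a suitable order $O$ containing $R$. Here $O=R$ or the maximal order $O_K$ of $K$; when $p\equiv 3\pmod 4$ both orders occur, which is where the Bass-order theory enters. Under this equivalence $\dim A=\rk_{O}M$ up to the factor relating $n$ to the $K$-rank: a rank-$m$ $O$-lattice gives $\dim A=m$. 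A polarization $\lambda$ corresponds to a positive definite hermitian form $h$ on $M\otimes\Q$ with respect to the canonical involution $\pi\mapsto-\pi$, integral on $M$. The condition $\ker\lambda=A[\pi_A]$ translates to $M^\vee=\pi^{-1}M$, where $M^\vee$ is the dual of $M$ with respect to $\Tr_{K/\Q}\circ h$ or the appropriate trace form. Thus $\wt\Sigma_n(\sqrt{-p})$ becomes the set of isometry classes of positive definite hermitian $O$-lattices of rank $n$ with $M^\vee=\pi^{-1}M$.

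Next we study the local conditions. At primes $\ell\ne p$, with $\ell\ne 2$ or the order locally maximal, the condition says $M_\ell$ is unimodular. Unimodular lattices exist in every hermitian space of rank $n$ whose determinant is a local norm, up to the usual constraints. At $\ell=p$, $K_p$ is ramified and the condition says $M_p$ is $\pi^{-1}$-modular, i.e.\ $\pi$-modular after scaling. Such lattices exist only in even rank, which explains the evenness of $n$, and only with a determinant class prescribed by $n$, roughly $\det\equiv(-p)^{n/2}\cdot(\text{unit})$ modulo norms. At $\ell=2$ when $p\equiv 3\pmod 4$ the order $R$ is non-maximal at $2$ and $2$ is unramified or split in $K$ according as $p\equiv 3$ or $7\pmod 8$. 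Here we must determine which hermitian spaces over $K_2$ (or $K_2=\Q_2\times\Q_2$) carry a self-dual $R_2$-lattice or $O_{K,2}$-lattice satisfying the trace-dual condition. I would do this by a direct computation with the Jordan splitting of lattices over the Bass order $R_2$.

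The global existence then reduces to a Hasse-principle question. A global hermitian space of rank $n$ over $K$ is determined by its determinant $d\in\Q^\times/\Nm K^\times$ and its signatures. We need a positive definite space whose determinant has the prescribed local classes at every prime. By the product formula for Hilbert symbols $\prod_v(d,-p)_v=1$, such a space exists if and only if the product of the local symbols forced at $p$, at $2$, and at $\infty$ is $1$. This is where the congruence conditions appear. For $p\equiv 1\pmod 4$ or $p=2$ there is enough freedom at $2$, since $K_2$ is ramified and the lattices there may be chosen with either determinant class, so existence always holds. For $p\equiv 3\pmod 4$ the $2$-adic condition is forced: in the split case $p\equiv 7\pmod 8$ every determinant is a norm, while in the inert case the determinant must be a unit times norms. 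The parity of the forced symbol is then $(\ -1,-p)_p^{n/2}$ times the archimedean symbol $(-1)^n$-type contributions. Hence, when $p\equiv 7\pmod 8$, the obstruction vanishes iff $4\mid n$. When $p\equiv 3\pmod 8$ the inert place contributes a compensating choice, perhaps via the alternative order $O_K$ versus $R$, so existence holds for all even $n$.

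The isogeny statement follows because isogeny classes correspond to isometry classes of the rational hermitian spaces $M\otimes\Q$, by Tate's theorem together with the lattice equivalence. The local conditions above pin down $\det$ locally everywhere, and the signature is fixed, so by Landsberg's classification all such spaces are isometric.

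The main obstacle is the $2$-adic analysis for $p\equiv 3\pmod 4$: classifying the lattices over the non-maximal Bass order $R_2$ and $O_{K,2}$ with the required duality, and computing exactly which determinant classes they realize. I would first try an explicit description via rank-$2$ building blocks, $\begin{pmatrix}a&b\\ \bar b&c\end{pmatrix}$, and orthogonal sums.
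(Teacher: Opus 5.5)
\textbf{There is a genuine gap.} Your skeleton is the paper's: pass to positive-definite hermitian $\Z[\sqrt{-p}]$-lattices with $\sqrt{-p}M^\vee=M$, impose local conditions prime by prime, use reciprocity for the symbols $(d\varphi,-p)_v$, glue local lattices, and get the isogeny statement from Landherr's theorem. Your reciprocity argument also correctly gives emptiness for $p\equiv 7\pmod 8$, $n\equiv 2\pmod 4$.

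The problem is the case $p\equiv 3\pmod 8$ with $n\equiv 2\pmod 4$, which is the one that makes the theorem nontrivial. There you only offer a guess, and the $2$-adic claim you do make excludes this case. If $2$ inert forced $d\varphi_2$ to be a unit times norms, then $(d\varphi,-p)_2=1$. But $(d\varphi,-p)_p=(-1)^{n/2}=-1$, and every other symbol is $1$ (including $v=\infty$, since $d\varphi>0$). So your own product formula would give $\wt\Sigma_n(\sqrt{-p})=\emptyset$.

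That $2$-adic claim is false for the correct duality. The condition $\ker\lambda=A[\pi_A]$ corresponds to $\sqrt{-p}M^\vee=M$ for the $R$-valued dual $M^\vee=\{x\mid h(x,M)\subseteq R\}$. The dual for $\Tr_{K/\Q}\circ h$ is $(2\sqrt{-p})^{-1}M^\vee$, so the choice is not cosmetic. For the $R$-valued duality, the $O_{K_2}$-line $\calL_0=O_{K_2}e$ with $\langle e,e\rangle=2$ is unimodular over $R_2$, because $2O_{K_2}$ is the largest $O_{K_2}$-ideal inside $R_2$. Yet its determinant $2$ is not a norm from the unramified $K_2$.

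Your fallback ``via the alternative order $O_K$'' points the wrong way. An $O_K$-stable $M$ with $\sqrt{-p}M^\vee=M$ has $M_2^\star=\frac12M_2$, hence $d\varphi_2\in 2^n\Z_2^\times\subseteq\Nm(K_2^\times)$. Such $M$ exist only when $4\mid n$ (Lemma~\ref{lem:modular-OK-lat-v2}), so in case (c) one has $\wt\Sigma_{n,1}(\sqrt{-p})=\emptyset$.

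The missing ingredient is a mixed-type unimodular $R_2$-lattice, for example $\calL_1\perp\calL_0^{n-1}$ with $\calL_1=R_2e$, $\langle e,e\rangle=1$. Its determinant $2^{n-1}$ has odd valuation. Take $d\varphi=[2]$, realized by the Gram matrix $\diag(1,\dots,1,2)$. Then:
\begin{itemize}
\item $(2,-p)_p=-1=(-1)^{n/2}$, so $H_p(1)^{n/2}$ sits in $V_p$;
\item $\calL_1\perp\calL_0^{n-1}$ sits in $V_2$;
\item $R^n$ is self-dual at all other primes.
\end{itemize}
The local--global principle for lattices then produces the global lattice; this is the paper's proof of Theorem~\ref{thm:4.12}(1).

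You also still owe two existence arguments. For $4\mid n$ you need an explicit construction, e.g.\ via $O_K$-lattices as in Theorem~\ref{thm:main1}. For $p=2$ you need the ramified-prime dyadic input of Lemma~\ref{lem:RP-case}: both determinant classes carry $\sqrt{-2}$-modular lattices, contrary to your general claim at $\ell=p$.

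Two further corrections:
\begin{itemize}
\item Theorem~\ref{thm:IKY-JKPRST} must be applied with $E$ chosen so that $\End(E)=\Z[\sqrt{-p}]$ (Waterhouse). Then all $R$-lattices, including the non-$O_K$-stable ones, parametrize $\wt\Sigma_n(\sqrt{-p})$, not ``$O$-lattices with $O=R$ or $O_K$''.
\item For the isogeny statement, when $2$ is inert the local conditions at $2$ do not fix $d\varphi_2$, since both classes occur. Reciprocity fixes it, and after that your Landherr argument goes through.
\end{itemize}
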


A basic step of the proof is to observe that every member of $\wt\Sigma_n(\sqrt{-p})$ is a polarized abelian variety over $\Fp$  isogenous to a power of an elliptic curve $E$ over $\Fp$ with $\pi_E^2=-p$. Thus, we can use  the very recent result of lattice descriptions of such geometric objects due to Ibukiyama-Karemaker-Yu~\cite[Corollary~4.6]{Ibukiyama-Karemaker-Yu-2025}, which is in turn built upon the work of Jordan et al.~\cite{JKPRST}. This reduces problems on $\wt\Sigma_n(\sqrt{-p})$ to those on a class of positive-definite hermitian $\Z[\sqrt{-p}]$-lattices, and we treat such problems using arithmetic methods. This lattice description will be recalled in Section~\ref{sec:lattice-descr}.
We refer to \cite{BKM, karemaker-pries, centeleghe-stix, centeleghe-Stix:2} for more studies on (polarized or unpolarized) abelian varieties over finite fields. 

Let $K=\Q(\sqrt{-p})$ and $O_K$ be the ring of integers of $K$. Let $\wt\Sigma_{n,1}(\sqrt{-p})\subseteq \wt\Sigma_n(\sqrt{-p})$ be the subset consisting of all members $(A,\lambda)$ of $\wt\Sigma_n(\sqrt{-p})$ such that $O_K\subseteq \End(A)$. If $\Z[\sqrt{-p}]\subsetneq O_K$, that is, $p\equiv 3 \pmod 4$, let $\wt\Sigma_{n,2}(\sqrt{-p})$ be the complement of $\wt\Sigma_{n,1}(\sqrt{-p})$ in $\wt\Sigma_n(\sqrt{-p})$. Namely, we have 
\[ \wt\Sigma_n(\sqrt{-p})=\wt\Sigma_{n,1}(\sqrt{-p})\,\bigsqcup\, \wt\Sigma_{n,2}(\sqrt{-p}).\]

Theorem~\ref{thm:main.1} follows immediately from the following refinement.

\begin{thm}
    \label{thm:main.2}
    Let the notation be as above.
\begin{enumerate}
    \item [{\rm (1)}] The set $\wt\Sigma_{n,1}(\sqrt{-p})$ is non-empty if and only if one of conditions {\rm (a)} and {\rm (b)} in Theorem~\ref{thm:main.1} holds.
    \item [{\rm (2)}] When $p\equiv 3\pmod{4}$,  the set $\wt\Sigma_{n,2}(\sqrt{-p})$is non-empty if and only if one of conditions {\rm (b)} and {\rm (c)} in Theorem~\ref{thm:main.1} holds. 
\end{enumerate}    
\end{thm}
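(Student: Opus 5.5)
The plan is to translate everything into hermitian lattices via the lattice description of Ibukiyama--Karemaker--Yu recalled in Section~\ref{sec:lattice-descr}, and then to decide existence one prime at a time.

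Fix an elliptic curve $E$ over $\Fp$ with $\pi_E^2=-p$. Choose $E$ with $\End(E)=O_K$ for part (1), and with $\End(E)=\Z[\sqrt{-p}]$ (when $p\equiv 3\pmod 4$) for part (2). Every $(A,\lambda)\in\wt\Sigma_n(\sqrt{-p})$ is isogenous to $E^n$. By \cite[Corollary~4.6]{Ibukiyama-Karemaker-Yu-2025}, the category of such $A$ is equivalent to the category of torsion-free $\Z[\sqrt{-p}]$-lattices $M$ of rank $n$, where $A\mapsto \Hom(E,A)$ or the analogous functor is used. Polarizations then correspond to positive-definite hermitian forms $h$ on $M\otimes\Q$, valued in the appropriate inverse different. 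The condition $\ker\lambda=A[\pi_A]$ becomes a condition on the dual: $M^\vee=\pi^{-1}M$, where $\pi=\sqrt{-p}$. Equivalently, $\sqrt{-p}\,h$ restricted to $M$ is a ``$\pi$-modular'' lattice. The membership $O_K\subseteq\End(A)$ corresponds to $M$ being an $O_K$-module.

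So part (1) asks: when does there exist a positive-definite $O_K$-hermitian lattice of rank $n$ that is $\pi$-modular, i.e.\ $M^\#=\pi^{-1}M$? Part (2) asks the same for lattices over the Bass order $\Z[\sqrt{-p}]$ that are not $O_K$-stable.

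\textbf{Local-global step.} By the Hasse principle for hermitian spaces (Landherr) and the existence of lattices in a given genus, a global lattice with prescribed local completions exists if and only if three things hold:
\begin{itemize}
\item[(i)] at each prime $\ell$ there is a local lattice of the required type;
\item[(ii)] the local determinants $\det \in \Q_\ell^\times/\Nm K_\ell^\times$ patch to a global positive-definite hermitian space;
\item[(iii)] the resulting global determinant $d\in\Q^\times_{>0}/\Nm K^\times$ satisfies the reciprocity law $\prod_v (d,-p)_v=1$ for Hilbert symbols.
\end{itemize}

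At $\ell\ne p$ with $\ell$ unramified in $K$, unimodular lattices exist with determinant a unit norm, so they impose nothing. At $p$ (ramified), a $\pi$-modular lattice of rank $n$ is an orthogonal sum of $n/2$ hyperbolic $\pi$-modular planes, or more generally is classified by Jacobowitz. Its determinant is forced up to norms, namely $(-p)^{n/2}$ times a controlled unit class, where $n$ even is used. At $\infty$ the determinant is positive. At $\ell=2$ when $p\equiv 3\pmod 4$, $2$ is unramified in $O_K$; in part (2) the order $\Z[\sqrt{-p}]$ is non-maximal at $2$, so we need a local classification of lattices over the Bass order $\Z_2[\sqrt{-p}]$ with the duality condition.

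The proof then reduces to computing $(d,-p)_2(d,-p)_p=1$. For $p\equiv1\pmod 4$ or $p=2$, the ramified places give freedom in choosing the unit class, so a solution always exists, which gives case (a). For $p\equiv 3\pmod 4$ in part (1), $2$ is unramified, so $(d,-p)_2$ is determined by $\mathrm{ord}_2 d$, which is even for a unimodular lattice. Then $(d,-p)_p=((-p)^{n/2},-p)_p\cdot(\text{unit})$, and computing this gives the value $(-1/p)^{n/2}=(-1)^{n/2}$. So existence holds iff $4\mid n$, which is case (b).

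For part (2), the non-maximal local lattices at $2$ contribute extra determinant classes. Their $2$-adic Hilbert symbol is $(2,-p)_2$, which equals $+1$ iff $p\equiv 7\pmod 8$. One then checks which combinations fix the global sign, yielding (b) or (c).

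\textbf{Main obstacle.} I expect the hardest step to be the local analysis at $2$ for lattices over the Bass order $\Z_2[\sqrt{-p}]$ that are not $O_K$-stable. Concretely, one must classify rank-$n$ such lattices with $M^\#=\pi^{-1}M$ and compute their possible determinant classes. I would first reduce via Bass order theory: such $M$ decomposes as $O_K$-part $\oplus$ copies of $\Z_2[\sqrt{-p}]$. I would then compute small-rank building blocks (ranks $1$ and $2$) and show the determinant picks up an extra factor $2$ modulo norms, giving the $\pmod 8$ dependence.

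Finally, the isogeny statement follows because all members have hermitian spaces $M\otimes\Q$ of the same dimension and determinant. The determinant is forced by the local conditions together with reciprocity, so by Landherr these spaces are isometric, and the corresponding polarized abelian varieties are isogenous.
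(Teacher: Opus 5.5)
Your route is the paper's. You pass to $\sqrt{-p}$-modular hermitian $\Z[\sqrt{-p}]$-lattices via Proposition~\ref{prop:wtSigma} and identify $\wt\Sigma_{n,1}$ with the $O_K$-stable ones. You then decide existence by combining Jacobowitz's classification at $p$, the local situation at $2$, and Hilbert reciprocity for $d\varphi$.

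Part (1) is correct, but the set-up needs one correction. You cannot take $\End(E)=O_K$ when $p\equiv 3\pmod 4$, because Theorem~\ref{thm:IKY-JKPRST} gives an equivalence only when $\End(E)=\Z[\pi]$. Use a single $E$ with $\End(E)=\Z[\sqrt{-p}]$ for both parts, as your next sentences implicitly do. Then for $O_K$-stable $M$ the $\Z[\sqrt{-p}]$-valued dual is $2M^{\star}$. So $M_2$ has Gram matrix $2$ times an $O_{K_2}$-unimodular one, with $\mathrm{ord}_2\det=n$, which is still even. Your argument that $(d,-p)_p=(-1)^{n/2}$ forces $4\mid n$ therefore goes through; the paper instead rescales by $\tfrac12$ (Lemma~\ref{lem:two-duals}). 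Also, $h_\lambda$ is $\Z[\sqrt{-p}]$-valued on $M$; no inverse different is involved.

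The genuine gap is in part (2), which is where this statement says more than Theorem~\ref{thm:main1}. Your sentence that the non-maximal lattices at $2$ have Hilbert symbol $(2,-p)_2$ misdescribes exactly the lattices part (2) is about.

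\emph{Taken literally, it gives the wrong answer.} For $p\equiv 3\pmod 8$ it forces $(d,-p)_2=-1$ for every non-$O_K$-stable lattice. Reciprocity with $(d,-p)_p=(-1)^{n/2}$ then gives $n\equiv 2\pmod 4$, so you would conclude $\wt\Sigma_{n,2}=\emptyset$ for $p\equiv 3\pmod 8$, $4\mid n$, contradicting (b).

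\emph{Read charitably, it still leaves the key question open.} If you mean ``$O_K$-lattices give $+1$, the extra ones give $(2,-p)_2$'', you have not shown that a non-$O_K$-stable lattice with symbol $+1$ exists. That is what case (b) needs. Mere existence of some $\sqrt{-p}$-modular lattice there comes from an $O_K$-lattice (Lemma~\ref{lem:modular-OK-lat-v2}) and says nothing about $\wt\Sigma_{n,2}$.

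\emph{The correct bookkeeping.} Write $R_2=\Z_2[\sqrt{-p}]$. A self-dual $R_2$-lattice of type $(r,s)$ has determinant $2^s$ times a unit. This is because $(R_2:O_{K_2})=2O_{K_2}$, so on the $O_{K_2}$-part the form is $2$ times a perfect $O_{K_2}$-valued form (Corollary~\ref{cor:bij-pairing}, Theorem~\ref{thm:orthogonal-decomp}). $O_K$-stability means $r=0$.

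Non-$O_K$-stable examples of both parities exist. For instance, $R_2^n$ with identity Gram matrix has determinant $1$, and $\calL_1\perp\calL_0^{n-1}$ has determinant $2^{n-1}$.
\begin{itemize}
\item For $p\equiv 3\pmod 8$, $2$ is inert, so these give $(d,-p)_2=+1$ and $-1$ respectively. Gluing with Lemma~\ref{lem:det-at-p} at $p$ realizes (b) with $d\varphi=[1]$ and (c) with $d\varphi=[2]$.
\item For $p\equiv 7\pmod 8$, $2$ splits, so the symbol at $2$ is always $1$. Reciprocity then forces $4\mid n$, which handles the ``only if'' direction with no analysis at $2$. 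The lattice $R_2^n$ supplies a non-$O_K$-stable example.
\end{itemize}

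With these explicit examples, the full Bass-order classification you flag as the main obstacle is not needed for this statement. The paper uses it (Theorem~\ref{thm:4.12}(2)) because it also counts genera, and it reads off non-$O_K$-stable genera from that count.
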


The second part of this paper focuses on the classification of polarized abelian varieties in $\wt\Sigma_n(\sqrt{-p})$. This can be reduced to classify smaller subsets called genera. Two polarized abelian varieties $(A_1,\lambda_1)$ and $(A_2,\lambda_2)$ over a finite field $\Fq$ are said to \emph{belong to the same genus} if they are isogenous and for every prime $\ell$ including $p$, their associated polarized $\ell$-divisible groups $(A_1,\lambda_1)[\ell^\infty]$ and $(A_2,\lambda_2)[\ell^\infty]$ are isomorphic over $\Fq$. 
Our second main result classifies the genera in $\wt\Sigma_n(\sqrt{-p})$, which paves the way for  more precise size estimates of $\wt\Sigma_n(\sqrt{-p})$.  Using the result \cite[Theorem 2.2]{yu:smf} (cf.~\cite[Theorem 5.10]{xue-yu:counting-av}), we can express each genus $\calG$ as a double coset space attached to the automorphism group scheme $G_x$ over $\Z$ of any base point $x=(A,\lambda)\in \calG$. Then we can compute the associated mass of each genus $\calG$, and estimate the size of $\calG$. See \cite{Hashimoto-Ibukiyama-1,Hashimoto-Koseki, karemaker2020mass, xue-yu:ppas, terakado-yu:basic-locus2022, terakado-xue-yu:2023} for some related work in this direction. 

\begin{thm}
    \label{thm:main.3}
   \begin{enumerate}
       \item[{\rm (1)}] If $p\not \equiv 3 \pmod 4$, then $\wt\Sigma_{n,1}(\sqrt{-p})$ consists of either two genera or one genus according to $4\mid n$ or not.
       \item[{\rm (2)}] Assume $p\equiv 3 \pmod 4$ and $4\mid n$.
         \begin{enumerate}
           \item[{\rm(a)}] $\wt\Sigma_{n,1}(\sqrt{-p})$ consists of one genus.
           \item[{\rm(b)}] $\wt\Sigma_{n,2}(\sqrt{-p})$ consists of $n$ genera if $p\equiv 3\pmod{8}$, and $3n/2$ genera if $p\equiv 7\pmod{8}$. 
         \end{enumerate}
       \item[{\rm (3)}] If $p\equiv 3 \pmod 8$ and $n\equiv 2 \pmod 4$, then $\wt\Sigma_{n,1}(\sqrt{-p})=\emptyset$ and $\wt\Sigma_{n,2}(\sqrt{-p})$ consists of $n/2$ genera.
   \end{enumerate} 
\end{thm}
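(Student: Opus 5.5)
We would prove Theorem~\ref{thm:main.3} by moving everything to hermitian lattices, using the lattice description of Ibukiyama--Karemaker--Yu \cite[Corollary~4.6]{Ibukiyama-Karemaker-Yu-2025} recalled in Section~\ref{sec:lattice-descr}. Fix an elliptic curve $E/\Fp$ with $\pi_E=\sqrt{-p}$ and take $\End(E)=O_K$ in the case of $\wt\Sigma_{n,1}$, resp.\ $\Z[\sqrt{-p}]$ in the case of $\wt\Sigma_{n,2}$. Then members $(A,\lambda)$ correspond to positive-definite hermitian lattices $(L,h)$ in $K^n$ over the relevant order. The condition $\ker\lambda=A[\pi_A]$ translates into the local condition that $L^\vee=\pi^{-1}L$ at $p$, where $\pi=\sqrt{-p}$ and $L^\vee$ is the dual with respect to $h$, and that $L$ is unimodular at all $\ell\neq p$. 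For $\wt\Sigma_{n,2}$, at $\ell=2$ the lattice is a module over $\Z_2[\sqrt{-p}]$ that is not $O_{K,2}$-stable. Since the genus of $(A,\lambda)$ is determined by isogeny class together with the local isometry classes $(L_\ell,h_\ell)$, counting genera amounts to counting the possible tuples of local isometry classes that are realized by a global positive-definite hermitian space. By Theorem~\ref{thm:main.1}, all members are isogenous, so the global hermitian space $V=L\otimes\Q$ is fixed. The first step is therefore to pin down, for each $\ell$, which local lattices of the required type exist inside $V_\ell$, and then check that they all glue, which is automatic because lattices with prescribed completions exist.

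\emph{Local classification away from $2$.} For odd $\ell\neq p$, unimodular hermitian lattices in a given space form a single class, so there is no contribution. At $\ell=p$, $K_p/\Q_p$ is ramified and the condition says $\pi L^\vee=L$, i.e.\ $L$ is $\pi$-modular. By Jacobowicz's classification, $\pi$-modular lattices over a ramified quadratic extension (with $p$ odd) are hyperbolic sums, unique up to isometry when $n$ is even. This yields one class in both the (1) and (2) settings. In case (1) with $p=2$, or whenever $2$ ramifies, the count is done instead at $\ell=2$.

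\emph{The prime $2$: the main step.} For $p\not\equiv3\pmod4$, the prime $2$ ramifies in $K$ (respectively $p=2$ is itself the ramified prime). One must classify unimodular, respectively $\pi$-modular, $O_{K,2}$-lattices of rank $n$ in $V_2$ via Jacobowicz's norm-ideal invariant. We expect exactly two classes, a normal and a subnormal (``even'') one, when $4\mid n$, and only one when $n\equiv2\pmod4$, because the discriminant constraint then forbids the second class. This gives~(1). For $p\equiv3\pmod4$, $2$ is unramified in $K$: it is split when $p\equiv7\pmod8$ and inert when $p\equiv3\pmod8$. Unimodular $O_{K,2}$-lattices are then unique, giving (2a). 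For $\wt\Sigma_{n,2}$ we must classify $\Z_2[\sqrt{-p}]$-lattices $L_2$ that are unimodular but not $O_{K,2}$-stable. Our plan is to attach to $L_2$ its $O_{K,2}$-saturation $M=O_{K,2}L_2$, so that $L_2$ is determined by the subspace $\bar L=L_2/2M\subset M/2M$ over $\F_4$ (inert case) or $\F_2\times\F_2$ (split case). Unimodularity of $L_2$ forces $M$ to have a specific Jordan type $M\supset M^\vee$, so that $M^\vee/M$ has $\F_2$-length $2r$. We would then classify the invariant $r$ together with a parity or type invariant in the split case. This should give $r\in\{1,\dots,n/2\}$ combined with two types, i.e.\ $n$ genera, when $p\equiv3\pmod8$ and $4\mid n$; an extra third option per value in the split case, giving $3n/2$; and only $n/2$ allowed values when $n\equiv2\pmod4$ and $p\equiv3\pmod8$. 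Here global determinant compatibility with $V$ kills half of the options, matching Theorem~\ref{thm:main.2}. The emptiness statement in~(3) is Theorem~\ref{thm:main.2}(1).

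We expect this $2$-adic classification of non-maximal $\Z_2[\sqrt{-p}]$-hermitian lattices to be the main obstacle. It is not covered by Jacobowicz's theory, since the order is non-maximal (Bass) but not Gorenstein-free in the relevant sense. It also requires a careful count of orbits of $U(M)$ on the admissible $\F$-subspaces, together with a check of which local types are compatible with the fixed global hermitian space via the product formula for determinants and Hasse invariants.
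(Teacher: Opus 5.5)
\textbf{There is a genuine gap: the classification that (2b) and (3) depend on is announced but never carried out.}

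Your overall reduction matches the paper's. Via Proposition~\ref{prop:wtSigma} and Proposition~\ref{prop:genera}, a genus is a tuple of local isometry classes in the fixed space $V$. Odd $\ell\neq p$ and $\ell=p$ each contribute one class. The ramified dyadic cases in (1) reduce, as you say, to Jacobowitz's classification together with $d\varphi_2=1$. Finally, $\wt\Sigma_{n,2}$ is obtained by discarding the unique $O_K$-stable class. The missing step is the one you flag yourself: the classification of unimodular $R_2=\Z_2[\sqrt{-p}]$-lattices in $V_2$. The counts $n$, $3n/2$, $n/2$ are read off from the statement rather than derived.

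Your sketched parametrization also does not fit your own invariant. If $M=O_{K_2}L_2$ and $M/M^\vee$ has $\F_2$-length $2r$, then $r$ is the rank of the free $R_2$-summand of $L_2$, and $s=n-r$ is the rank of the free $O_{K_2}$-summand. The admissible values of $r$ are then:
\begin{itemize}
\item[] $\{1,\dots,n\}$ in the split case;
\item[] $\{2,4,\dots,n\}$ when $p\equiv 3\pmod 8$ and $4\mid n$;
\item[] $\{1,3,\dots,n-1\}$ in case (3).
\end{itemize}
For each even $r$ there are \emph{two} classes, and for each odd $r$ only \emph{one}. Your ``$r\in\{1,\dots,n/2\}$ with two types'' and ``a third option per value in the split case'' have the right cardinalities but the wrong structure. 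Nothing in your plan explains why a second type exists exactly for even $r$, or why the parity of $s$ is forced.

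Three ingredients are missing.
\begin{enumerate}
\item Every $R_2$-lattice is $R_2^r\oplus O_{K_2}^s$ (Borevich--Faddeev, since $R_2$ is Bass). Because $(R_2:O_{K_2})=2O_{K_2}$ is principal and the upper off-diagonal blocks of the matrix of $\Phi$ lie in the Jacobson radical, this splitting can be chosen orthogonal for unimodular $L_2$ (Theorem~\ref{thm:orthogonal-decomp}). The $O_{K_2}$-part is then $\calL_0^s$, and Witt cancellation makes the free $R_2$-part an isometry invariant (Proposition~\ref{prop:orth-decomp-R2}).
\item Free unimodular $R_2$-lattices are classified by reducing modulo $\grm_2=2O_{K_2}$ to nondegenerate symmetric bilinear forms over $\F_2$ and lifting orthogonal splittings. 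Hensel's lemma shows that a plane with $\langle v,v\rangle\in 2\Z_2$ for all $v$ is hyperbolic (Lemma~\ref{lem:R2-hyperbolic}). This yields $\calL_1\perp\calH^{(r-1)/2}$ for odd $r$, and $\calH^{r/2}$ or $\calL_1^2\perp\calH^{(r-2)/2}$ for even $r$ (Proposition~\ref{prop:str-free-R2-latt}). The two types for even $r$ are exactly the alternating/non-alternating dichotomy over $\F_2$.
\item The determinant of such a lattice is $\pm 2^s$. When $2$ is inert, this forces $s$ even if $d\varphi=[1]$ and $s$ odd if $d\varphi=[2]$, in $\Q_2^\times/\Nm(K_2^\times)$.
\end{enumerate}
Your orbit count of admissible subspaces $L_2/2M\subset M/2M$ could in principle replace this. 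You would still have to determine which subspaces give unimodular lattices and count the $U(M)$-orbits, and none of that is done.

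Two smaller points.
\begin{itemize}
\item[] For $p=2$, every $\sqrt{-2}$-modular lattice is subnormal. The two classes are separated by norm $2O_{K_2}$ versus $4O_{K_2}$, not ``normal versus subnormal''.
\item[] A base curve with $\End=O_K$ lies outside the equivalence of Theorem~\ref{thm:IKY-JKPRST} when $p\equiv 3\pmod 4$. It is cleaner to keep $\End(E)=\Z[\sqrt{-p}]$, identify $\wt\Sigma_{n,1}$ with the $O_K$-stable lattices, and compare the two duals via Lemma~\ref{lem:two-duals}.
\end{itemize}
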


Theorems~\ref{thm:main.2} and~\ref{thm:main.3} are derived directly from the lattice description of $\wt\Sigma_n(\sqrt{-p})$ in Proposition~\ref{prop:wtSigma} and purely arithmetic results on $\Z[\sqrt{-p}]$-hermitian lattices to be stated in Theorem~\ref{thm:main.4}. A $\Z[\sqrt{-p}]$-lattice $L$ in a nondegenerate hermitian $K$-space $(V,\varphi)$ is said to be {\it $\sqrt{-p}$-modular} if $\sqrt{-p} L^\vee=L$, where $L^\vee$ is the $\Z[\sqrt{-p}]$-valued dual lattice:
\begin{equation}\label{eqn:1.3}
   L^\vee:=\{\, x\in V\, | \, \varphi(x,L)\subseteq \Z[\sqrt{-p}]\, \}. 
\end{equation}
For each prime $\ell$, let $L_\ell\coloneqq L\otimes_\Z\Z_\ell$ be the $\ell$-adic completion of $L$. Two $\Z[\sqrt{-p}]$-lattices $L, L'$ in $(V, \varphi)$ are said to \emph{belong to the same genus} if $L_\ell$ is isometric to $L'_\ell$ for every prime $\ell$. 
   


\begin{thm}[Theorems~\ref{thm:main1} and \ref{thm:4.12}]\label{thm:main.4} Let $(V,\varphi)$ be a positive-definite hermitian $K$-space of even dimension $n$ and determinant $d\varphi\in \Q^\times_{+}/{\rm N}(K^\times)$, where ${\rm N}: K^\times \to \Q^\times$ is the norm map.

\begin{enumerate}
\item[{\rm (1)}] If there  exists a $\sqrt{-p}$-modular
   $O_K$-lattice $L$ in $(V, \varphi)$, then $d\varphi=[1]$.
\item[{\rm (2)}] Suppose $d\varphi=[1]$. Then there exists a $\sqrt{-p}$-modular
   $O_K$-lattice $L$ in $(V, \varphi)$ if and only if
   \begin{itemize}
       \item[{\rm (a)}]  $p\not\equiv 3\pmod{4}$, or
       \item[{\rm (b)}] $p\equiv 3\pmod{4}$ and $4\mid n$. 
   \end{itemize}
   \item[{\rm (3)}] Suppose that one of the conditions in \emph{(2)} holds so that there exists a $\sqrt{-p}$-modular $O_K$-lattice in $(V, \varphi)$. 
   \begin{itemize}
       \item[{\rm (a)}] If $p\not\equiv 3\pmod{4}$ and $4\mid n$, then there are two genera of $\sqrt{-p}$-modular $O_K$-lattices in $(V, \varphi)$. 
       \item [{\rm (b)}] For the remaining cases (that is, either $p\not\equiv 3\pmod{4}$ and $4\nmid n$ or condition (b) in (2) holds),  there is a unique genus of $\sqrt{-p}$-modular $O_K$-lattices in $(V, \varphi)$.
   \end{itemize}
   
  \item[{\rm (4)}] Suppose that $p\equiv 3\pmod{4}$. There exists a $\sqrt{-p}$-modular $\Z[\sqrt{-p}]$-lattice $L$ in $(V, \varphi)$ if and only if 
    \begin{enumerate}
        \item[{\rm (a)}] $p\equiv 7\pmod{8}$, $4\mid n$  and $d\varphi=[1]$; or  
        \item[{\rm (b)}] $p\equiv 3\pmod{8}$, $4\mid n$ and $d\varphi=[1]$; or  
\item[{\rm (c)}]   $p  \equiv 3\pmod{8}$, $ n\equiv 2\pmod{4}$  and $d\varphi=[2]$.  
    \end{enumerate}
    \item[{\rm (5)}] Keep the assumption in {\rm (4)} and suppose that one of the conditions in {\rm (4)} holds so that there exists a $\sqrt{-p}$-modular $\Z[\sqrt{-p}]$-lattice in $(V, \varphi)$. Then each genus of such lattices is uniquely determined by the corresponding isometric class of unimodular $\Z_2[\sqrt{-p}]$-lattices in $(V_2, \varphi_2)$ at the prime $2$.  Moreover, there are $3n/2+1$ genera in case~{\rm (a)}, $n+1$ genera in case~{\rm {(b)}}, and $n/2$ genera in case~{\rm (c)}.  
    \end{enumerate}
\end{thm}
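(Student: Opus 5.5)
Throughout, I would work locally and then globalize via the Hasse principle for hermitian spaces. A nondegenerate hermitian $K$-space is determined by its dimension, its signature at $\infty$ and its determinant in $\Q^\times/\Nm(K^\times)$ (Landherr). Moreover, a $\sqrt{-p}$-modular lattice exists globally if and only if one exists in $V_\ell$ for every prime $\ell$, because the local lattices can be patched. For $\ell\nmid 2p$ the order $\Z_\ell[\sqrt{-p}]$ is maximal and $\sqrt{-p}$ is a unit. So $\sqrt{-p}$-modular simply means unimodular, which exists in $V_\ell$ exactly when $\mathrm{ord}_\ell(d\varphi)$ is compatible, i.e.\ $d\varphi\in \Nm(K_\ell^\times)\Z_\ell^\times$. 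Since $d\varphi$ is positive, the product formula for the Hilbert symbol $(d\varphi,-p)_v$ reduces the global condition to the places $\ell=p$ and $\ell=2$.

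For parts (1)--(3) the $O_K$-case is handled at $\ell=p$, which is ramified with uniformizer $\pi=\sqrt{-p}$. A $\pi$-modular $O_{K,p}$-lattice is a $\pi^1$-modular lattice in Jacobowitz's classification of ramified hermitian lattices. Here $\varphi$ induces an alternating form $\pi^{-1}\varphi$ modulo $\pi$, so $n$ must be even and $L$ is an orthogonal sum of hyperbolic planes $H(1)$. Then $d\varphi=(-\pi\bar\pi)^{n/2}=p^{n/2}$ up to norms, so $d\varphi$ is determined, and $(d\varphi,-p)_p$ is forced. Combined with the conditions at $\ell\ne 2,p$ and the product formula, this should force $d\varphi=[1]$, proving (1). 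For (2), the remaining condition is at $\ell=2$:
\begin{itemize}
\item If $p\not\equiv 3\pmod 4$, then $2$ ramifies, and $\sqrt{-p}$ is a unit at $2$, so we need a unimodular $O_{K,2}$-lattice in $V_2$. Jacobowitz shows such a lattice always exists in any even-dimensional space with the right determinant, and here there are two classes, distinguished by the norm ideal (odd vs.\ even), when the dimension allows both.
\item If $p\equiv 3\pmod 4$, then $2$ is unramified and unimodular lattices exist iff $\mathrm{ord}_2 d\varphi$ is even. The subtle point is then the compatibility at $p$: the local space at $p$ carrying $H(1)^{n/2}$ has Hasse invariant depending on $(-1)^{n/2}$ and $(-1,-p)_p$. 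For $p\equiv3\pmod 4$ this gives exactly $4\mid n$.
\end{itemize}
For (3) I would count local classes. At $p$ the $\pi$-modular lattice is unique. At $2$, in the ramified case with $4\mid n$, the two classes are the even and odd unimodular ones, whereas for $n\equiv 2\pmod 4$ only one is compatible with the determinant. In the unramified case the class is unique.

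For (4) and (5), with $p\equiv 3\pmod 4$, the analysis of $\Z[\sqrt{-p}]$-lattices differs from that of $O_K$-lattices only at $2$, where $R=\Z_2[\sqrt{-p}]$ is a non-maximal Bass order of conductor $2$ with $\sqrt{-p}$ a unit. I would classify unimodular $R$-lattices using the theory of lattices over Bass orders: each such lattice decomposes as an orthogonal sum of rank-one and rank-two pieces over $R$ or over $O_{K,2}$, each with a specified local multiplier ring. Then I would compute which determinants occur. Elements of $R^\times$ have norms in $1+4\Z_2$ or similar, and the relevant distinction is between $d\varphi\equiv 1$ and $2$ modulo norms. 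This distinction depends on whether $2$ is split ($p\equiv 7\pmod 8$) or inert ($p\equiv 3\pmod 8$).

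Next I would enumerate the isometry classes of unimodular $R$-lattices that are not $O_{K,2}$-modules, together with the $O_{K,2}$ one. Their invariants should be the number $k$ of rank-one $R$-components, ranging over $0\le k\le n$ with parity restrictions, together with a discriminant-type invariant; in the split case there is a third type. The counts should come out as $3n/2+1$, $n+1$ and $n/2$. Subtracting the $O_K$-genus gives Theorem~\ref{thm:main.3}(2b). Since the local lattices at all other primes are unique, each genus is determined by its $2$-adic class.

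The main obstacle is this $2$-adic classification of unimodular hermitian lattices over the non-maximal order $\Z_2[\sqrt{-p}]$, including the cancellation and the determination of the complete invariants. I would attack it by reducing modulo the conductor and studying the induced quadratic or symmetric forms over $\F_2$ on $L/2L^{\#}$, in analogy with O'Meara's treatment of dyadic quadratic lattices.
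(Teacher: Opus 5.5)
\textbf{Parts (1)--(3).} For odd $p$ your outline matches the paper: local existence away from $2p$, Jacobowitz's $H_p(1)^{n/2}$ at $p$, the R-U classification at $2$ when $p\equiv 1\pmod 4$, and Hilbert symbols with the Hasse norm theorem. There is one slip: $\det H_p(1)=-\pi\bar\pi=-p$, so $d\varphi_p\equiv(-1)^{n/2}$, not $p^{n/2}$. With $p^{n/2}$ the condition $4\mid n$ could never arise.

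The genuine gap is $p=2$, which falls under ``$p\not\equiv 3\pmod 4$''. There $\ell=p=2$ is dyadic of R-P type and $\sqrt{-2}$ is \emph{not} a unit at $2$. Moreover, $\sqrt{-2}$-modular $O_{K_2}$-lattices are neither all hyperbolic nor unique. Suppose they were all $H_2(1)^{n/2}$. Then $d\varphi_2\equiv(-2)^{n/2}\equiv(-1)^{n/2}$, and since $(-1,-2)_2=-1$, no such lattice would exist for $n\equiv 2\pmod 4$ and $d\varphi=[1]$. That contradicts (2)(a).

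What you need is Jacobowitz's R-P results (Props.~9.1, 9.2, 10.2--10.4). Every such lattice is $M_2\perp H_2(1)^{n/2-1}$ with $M_2$ one of $H_2(1)$, $\begin{bmatrix}-2&\sqrt{-2}\\-\sqrt{-2}&0\end{bmatrix}$, $\begin{bmatrix}-2&\sqrt{-2}\\-\sqrt{-2}&4\end{bmatrix}$. The last plane gives existence when $n\equiv 2\pmod 4$. For $4\mid n$, the two genera come from two classes (norms $4O_{K_2}$ and $2O_{K_2}$) at the single ramified prime. They do not come from ``a unique class at $p$ and two unimodular classes at $2$''.

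\textbf{Parts (4)--(5).} The substance here is the classification of unimodular hermitian $R_2$-lattices, $R_2=\Z_2[\sqrt{-p}]$, which you leave open, and what you do assert is not right.

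\emph{Orthogonal splitting.} You posit an orthogonal decomposition into lines and planes. Borevich--Faddeev only gives a direct sum $N=N_1\oplus N_2$, with $N_1$ free over $R_2$ and $N_2$ free over $O_{K_2}$. The paper's key step (its orthogonal decomposition theorem over local Bass orders) takes a pseudo-basis of $N$ and the dual pseudo-basis $\{\alpha_i f^{(i)}_k\}$. In these bases the matrix of $N\to\Hom_{R_2}(N,R_2)$ is block lower-triangular modulo the Jacobson radical of $O_{K_2}$, because its upper-right block has entries in $(R_2:O_{K_2})=2O_{K_2}$. Hence $\langle~,~\rangle|_{N_1}$ is perfect and $N_1$ splits off orthogonally.

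\emph{The $O_{K_2}$-part.} Then $\langle N_2,N_2\rangle\subseteq 2O_{K_2}$ with $\tfrac12\langle~,~\rangle$ perfect, so $N_2\simeq\calL_0^s$. Here $\calL_0$ is the $O_{K_2}$-line with $\langle e,e\rangle=2$, of determinant $2$; it is exactly what realizes case (c), e.g.\ $\calL_1\perp\calL_0^{n-1}$.

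\emph{Invariance of $N_1$.} The class of $N_1$ is an invariant of $N$ only through a Witt cancellation theorem that uses that $2$ is unramified in $K$; it fails in general (cf.\ Example~\ref{ex:no-witt}).

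\emph{The free part.} Your idea of reducing modulo the conductor is right here, but only after the splitting. Free unimodular $R_2$-lattices are classified by reducing modulo $\grm_2=2O_{K_2}$ to nondegenerate symmetric $\F_2$-forms and lifting orthogonal splittings. This uses two facts:
\begin{enumerate}
\item $\Nm(R_2^\times)=\Z_2^\times$, so every free unimodular line is $\calL_1$. Your guess that unit norms lie in $1+4\Z_2$ is false and would inflate the count.
\item A Hensel argument shows that a free unimodular plane with all $\langle v,v\rangle\in 2\Z_2$ is $\calH$.
\end{enumerate}

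\emph{Invariants and counts.} The complete invariants are the type $(r,s)$ and, for even $r>0$, whether the $\F_2$-form on $N_1/\grm_2N_1$ is alternating. They are not ``the number of rank-one $R$-components plus a discriminant, with a third type in the split case''. Since $d\calL_1=1$, $d\calH=-1$, $d\calL_0=2$, there is no determinant constraint when $2$ splits, giving $3n/2+1$ classes. When $2$ is inert, $s$ must be even or odd according as $d\varphi=[1]$ or $[2]$, giving $n+1$ and $n/2$ classes. Your proposal asserts these counts rather than deriving them.
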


The remaining part (Sections 3--5) of this paper
treats the hermitian $\Z[\sqrt{-p}]$-lattices concerned, which is of interest in its own right. Even though our motivation arises from the geometry, these arithmetic results can be read on their own. The method for detecting the global constraint of the lattices here can be applied to more general hermitian lattices. 

We refer to Section~\ref{sec:4} for the classification of the genera of  $\Z[\sqrt{-p}]$-lattices when $p\equiv 3\pmod 4$, which is made more complicated because of the non-maximality of $\Z[\sqrt{-p}]\subsetneq O_K$.
The proof in this case requires the classification of self-dual hermitian $\Z_2[\sqrt{-p}]$-lattices. This classification is originally given in Yu~\cite[Theorems 1.1 and 1.2]{Yu-CF-PAMQ-2012}, where his proofs in cases $p\equiv 3\pmod 8$ and $p\equiv 7\pmod 8$ are treated separately. We give a uniform proof of the required classification, which results from  an orthogonal decomposition theorem in Section~\ref{sec:5} for any self-dual hermitian lattice over an arbitrary  commutative local Bass order.

Let $Z$ be a complete discrete valuation ring with fraction field $Q$, $E$ a commutative semi-simple $Q$-algebra and $R$ be a local Bass $Z$-order of $E$. See an exposition for the definition of Bass orders and classification of lattices over a Bass order in Section~\ref{sec:5}. Suppose $E$ admits a (possibly trivial) involution $\sigma$ that leaves $R$ stable and let $V$ be a free $E$-module together with a non-degenerate sesquilinear pairing 
\[ \<~,~\>: V\times V\to E. \]
If $M$ is an $R$-lattice, then there is an ascending  chain of $Z$-orders
\[   R\subseteq R_1\subsetneq R_2 \subsetneq \cdots \subsetneq R_m\]
and an $m$-tuple $(r_1, \cdots, r_m)$ of positive integers so that there is an isomorphism of $R$-lattices 
\[ M\simeq R_1^{r_1}\oplus \dots \oplus R_m^{r_m};\]
moreover, the orders $R_i$, $1\le i\le m$, and $(r_1, \cdots, r_m)$ are uniquely determined by $M$; see \cite[Section 37,
p.~789]{curtis-reiner:1}. These orders $R_i$, for $1\le i\le m$, together with the $m$-tuple $(r_1, \cdots, r_m)$ are called the \emph{structural invariants} of $M$. 
For each $i$, the $R_i$-ideal $(R:R_i):=\{x\in E\, |\, aR_i\subseteq R\}$ is principal (Lemma~\ref{lem:rel_cond}), and we fix a generator $\alpha_i$ of $(R:R_i)$. 

\begin{thm}[Theorem~\ref{thm:orthogonal-decomp}]\label{thm:main.5}
Let $M$ be a self-dual $R$-lattice in $(V,\<~,~\>)$ with structural invariants
$R_1\subsetneq R_2\subsetneq \cdots \subsetneq
R_m$ and $(r_1, \cdots, r_m)$.  Then there exists an orthogonal decomposition of
  $M$ into $R$-sublattices
  \begin{equation}
    \label{eq:1.3}
  M=M_1\oplus \cdots\oplus M_m,\qquad \text{with}\quad \langle M_i, M_j\rangle=0, \quad \forall 1\leq i<j\leq m, 
  \end{equation}
  where each $M_i$ is a free $R_i$-lattice of rank $r_i$, and the restriction $\langle~,~\rangle_{M_i}: M_i\times
M_i\to R$ of $\langle~,~\rangle$ to $M_i$ is of the form
$\alpha_i\langle~,~\rangle_i$ for a perfect sesquilinear pairing $\langle~,~\rangle_i: M_i\times
M_i\to R_i$. 
\end{thm}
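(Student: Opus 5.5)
We argue by induction on $m$, the length of the chain of structural invariants, splitting off the summand with the smallest order $R_1$ at each step. We use the following facts about local Bass orders (Section~\ref{sec:5}, \cite[\S37]{curtis-reiner:1}).
\begin{itemize}
\item Every overorder of $R$ is again Bass and local.
\item Every $R$-lattice is a direct sum of copies of overorders.
\item $(R:R_i)=\alpha_i R_i$ is principal (Lemma~\ref{lem:rel_cond}).
\item The orders $R_i$ are $\sigma$-stable, since they are determined intrinsically by $M$ and the overorders of $R$ form a chain. We would check this along the way.
\end{itemize}
Throughout, self-duality means $M^\vee:=\{x\in V: \langle x,M\rangle\subseteq R\}=M$.

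\textbf{Step 1: dualising a single summand.} Compute the dual of a free $R_i$-module $N=R_i^{r}$ with respect to an $R$-valued form. The $R$-dual of $R_i$ is $(R:R_i)=\alpha_iR_i$, so $\Hom_R(R_i,R)\cong \alpha_i R_i\cong R_i$ as $R_i$-modules. Hence the dual of an $R_i$-lattice is again of type $R_i$. Combined with uniqueness of structural invariants, $M\simeq M^\vee$ forces the pairing to respect the decomposition in a filtered sense. Concretely, if $M=N_1\oplus N'$ with $N_1$ the $R_1$-part and $N'$ an $R_2$-module (so $R_1\subsetneq R_2\subseteq \End(N')$), then:
\begin{itemize}
\item for $x\in N'$ and $y\in M$ we have $\langle x,y\rangle\in (R:R_2)$-type ideals;
\item $\langle N_1,N_1\rangle$ generates $\alpha_1R_1$ modulo the image of the $R_2$-part.
\end{itemize}

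\textbf{Step 2: splitting off a free $R_1$-summand with perfect form.} This is the key step. Set $\langle~,~\rangle'=\alpha_1^{-1}\langle~,~\rangle$ on $M$. Since $\alpha_1R_1$ is the largest $R_1$-submodule of $R$, the form $\langle~,~\rangle'$ takes values in $R_1$ on $R_1M$. Now reduce modulo the Jacobson radical of $R_1$, equivalently pass to $R_1M/\mathfrak m_{R_1}$. The summands of type $R_j$ with $j\geq 2$ should pair into $\mathfrak m_{R_1}$ under $\langle~,~\rangle'$, because $\alpha_1^{-1}\alpha_j R_j\subseteq \mathfrak m_{R_1}$ when $R_j\supsetneq R_1$. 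Self-duality then forces the induced form on the $r_1$-dimensional residue space coming from $N_1$ to be nondegenerate.

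Lift a basis by Nakayama. This gives $e_1,\dots,e_{r_1}\in M$ whose $\langle~,~\rangle'$-Gram matrix lies in $\GL_{r_1}(R_1)$. Put $M_1:=\sum R_1e_k$; it lies in $M$ after adjusting, since $N_1$ is already an $R_1$-module. Because the Gram matrix is invertible over $R_1$, the standard projection
\[
x\mapsto x-\sum_k \langle x,e_k\rangle'\,(\text{dual basis})
\]
has coefficients in $R_1$. This gives $M=M_1\oplus M_1^{\perp}$, with the restriction of $\langle~,~\rangle$ to $M_1$ equal to $\alpha_1$ times a perfect $R_1$-valued form.

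\textbf{Step 3: induction.} $M_1^\perp$ is self-dual. By Krull–Schmidt/uniqueness of invariants, its structural invariants are $R_2\subsetneq\cdots\subsetneq R_m$ with $(r_2,\dots,r_m)$. Induction then finishes the proof; the case $m=1$ is Step 2 alone.

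\textbf{Main obstacle.} The hard point is the claim in Step 2 that the higher summands pair into $\alpha_1\mathfrak m_{R_1}$, that is, the containment $\alpha_j R_j\subseteq \alpha_1\mathfrak m_{R_1}$ for $j\geq2$. We would first try to deduce this from the chain property of Bass overorders: $R_1\subsetneq R_2$ implies $(R:R_2)\subsetneq (R:R_1)$, and both are principal. Checking that the quotient really lands in the radical, rather than merely in a proper ideal, is where the Bass hypothesis (Gorenstein overorders) must be used carefully.
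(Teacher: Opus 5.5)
Your skeleton matches the paper's proof. You rescale so that $R_1=R$, show that the form restricted to the $R_1$-isotypic summand $N_1$ is $\alpha_1$ times a perfect $R_1$-valued form, split $N_1$ off with its orthogonal complement (Lemma~\ref{lem:orthog-decomp}), and induct using uniqueness of the structural invariants.

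The gap is in the one step that carries the content: ``self-duality then forces the induced form on the residue space coming from $N_1$ to be nondegenerate.'' You give no argument for it, and the reduction you sketch cannot supply one, because for $m\geq 2$ the residue form of a unimodular $M$ is degenerate. Take $R_1=R$. For $y\in N_j$ with $j\geq 2$ and any $x\in M$, we have $\langle x,y\rangle,\langle y,x\rangle\in (R:R_j)\subseteq \mathfrak m_R$. So the (nonzero) image of $N_j$ in $M/\mathfrak m_R M$ lies in the radical of the induced form. Hence the argument ``the residue form is nondegenerate and $N_1$ is orthogonal to the rest'' is not available. Unimodularity of a non-free lattice has to be used through $\Hom_R(M,R)$.

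The paper does this in Proposition~\ref{prop:restr-unimodular}. With respect to the pseudo-bases $\scrB_M$ and $\scrC_M$ of Corollary~\ref{cor:pseudo-basis-dual}, the matrix $B=[\alpha_i^{-1}A_{ij}]$ of $\Phi$ lies in $\GL_n(O_E)$. Its blocks above the diagonal have entries in $(R_i:R_j)\subseteq \grJ(O_E)$ (Lemma~\ref{lem:upper-block-radical}). So $B$ is block triangular modulo $\grJ(O_E)$, and every diagonal block is invertible.

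Your own observation also closes the gap once you bring in the inverse. The matrix $C=B^{-1}\in\End_R(\Lambda_0)$ has $C_{j1}$ with entries in $(R_j:R_1)=R_j$. The block $B_{1j}$ has entries in $(R_1:R_j)$, which is an $R_j$-module. Hence
\[
B_{11}C_{11}=I-\sum_{j\geq 2}B_{1j}C_{j1}\in I+\Mat_{r_1}(\mathfrak m_{R_1}),
\]
so $B_{11}\in\GL_{r_1}(R_1)$. Only this case $i=1$ is needed for the induction.

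By contrast, what you call the main obstacle is immediate. The set $(R_1:R_j)=\alpha_1^{-1}\alpha_jR_j$ is an ideal of $R_1$, proper because $R_j\not\subseteq R_1$. Moreover $R_1$ is local when $m\geq 2$: the only non-local order that can occur in the chain is $O_{E_1}\times O_{E_2}$, and only as $R_m$. So $(R_1:R_j)\subseteq\mathfrak m_{R_1}$, and the Bass hypothesis enters only through Lemma~\ref{lem:rel_cond}. Your assertion that every overorder of $R$ is local is false in that split case.

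One smaller point concerns forms that are sesquilinear but not $\varepsilon$-hermitian. Your projection $x\mapsto x-\sum_k\langle x,e_k\rangle'(\cdots)$ produces the left orthogonal complement, which gives $\langle M_j,M_1\rangle=0$. To get $\langle M_1,M_j\rangle=0$ for $j>1$ as stated, split off with the right complement $\{v:\langle M_1,v\rangle=0\}$, as in Lemma~\ref{lem:orthog-decomp}.
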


If $\langle~,~\rangle$ is $\varepsilon$-hermitian for some $\varepsilon\in O_E^\times$ with $\varepsilon\varepsilon^\sigma=1$, then Theorem~\ref{thm:main.5} produces an orthogonal decomposition $M=M_1\perp\cdots \perp M_m$ in the classical sense, that is, \eqref{eq:1.3} holds with $\langle M_i, M_j\rangle=0$ for all $i\neq j$.
This theorem reduces the classification of self-dual hermitian, symmetric or symplectic $R$-lattices $M$ to the subclass of free lattices over suitable suporders $R_i\supseteq R$, provided that the generators $\alpha_i$ of $(R:R_i)$ can be chosen $\sigma$-invariant. Thus, this result can be used to extend 
some previous results in classical literature such as Baeza \cite{Baeza-QF-semilocal}, Knebusch \cite {Knebusch-1977} or
Knus \cite{Knus-1991}, where the lattices $M$ considered are projective, to general lattices over a local Bass order as above.  
We remark that the self-dual $R_i$-lattice $M_i$ in the orthogonal decomposition \eqref{eq:1.3} does not need to be uniquely determined up to isomorphism by $M$; see Example~\ref{ex:no-witt}. \\

This paper is organized as follows. Section~2 describes the set $\wt \Sigma_n(\sqrt{-p})$ in terms of hermitian lattices based on the works \cite{JKPRST} and \cite{Ibukiyama-Karemaker-Yu-2025}. Section~3 classifies the positive-definite hermitian $O_K$-lattices whose $O_K$-valued dual lattice $L^\star$ is equal to $\sqrt{-p}^{-1} L$. Extending results of Section~3, in Section~4 we classify the positive-definite $\sqrt{-p}$-modular hermitian $\Z[\sqrt{-p}]$-lattices. Section~5 treats the proof of Theorem~\ref{thm:main.5}.

\section{Polarized abelian varieties and lattice description}
\label{sec:lattice-descr}


In this section,  we review the lattice description of polarized abelian varieties over $\Fq$ that are isogenous to a power of an elliptic curve, where $\Fq$ denotes the finite field of $q$ elements for a power $q$ of $p$. 
This is the main tool to translate problems for $\wt \Sigma_n(\sqrt{-p})$ to those of suitable hermitian lattices over an imaginary quadratic order. This method also works for  more general classes of polarized abelian varieties; see Theorem~\cite[Theorem C]{Ibukiyama-Karemaker-Yu-2025}. Our exposition is based on Ibukiyama-Karemaker-Yu~\cite[Section 4]{Ibukiyama-Karemaker-Yu-2025}.  

Let $E$ be a fixed elliptic curve over $\Fq$ with Frobenius endomorphism $\pi$ and canonical principal polarization $\lambda_E$. 
Let $R:=\End(E)$ and $B:=\End^0(E)$ denote the respective endomorphism ring and endomorphism algebra of $E$ over $\Fq$, which are equipped with the involution $a \mapsto a^\sigma$ induced by $\lambda_E$. Then $B$ is either an imaginary quadratic field or the definite quaternion $\Q$-algebra $B_{p,\infty}$, and $a\mapsto a^\sigma$ is the canonical involution. Let $\prescript{}{R}{\mathrm{Lat}}$ (resp.~$\mathrm{Lat}_R$) denote the category of left (resp.~right) $R$-lattices. To each right $R$-lattice $M$, one attaches a left $R$-lattice $\prescript{}{R}{M}:=M$ with left action given by $a \cdot m:= m a^\sigma $ for $m\in M$ and $a\in R$, and vice versa. Then the functor $M\mapsto \prescript{}{R}{M}$ induces an equivalence of categories from $\mathrm{Lat}_R$ to $\prescript{}{R}{\mathrm{Lat}}$.

Following~\cite{Ibukiyama-Karemaker-Yu-2025}, by a left (resp.~right) hermitian $R$-lattice $M$ we mean an $R$-lattice in a finitely generated left (resp.~right) hermitian $B$-space $(V,h)$. That is, $h:V\times V\to B$ is a $\Q$-bilinear pairing satisfying $h(y,x)=h(x,y)^\sigma$ and
\[ h(ax,by)=ah(x,y)b^\sigma\quad  (\text{resp.}~h(xa,yb)=a^\sigma h(x,y)b ), \quad \forall a, b \in B, \, x,y\in V.\]
If $h(M,M)\subseteq R$, we say that $M$ is integral or $R$-valued, and $h$ is integral on $M$. 
Let $\prescript{}{R}{\mathrm{Lat}}^H$ (resp.~$\mathrm{Lat}_R^H$) denote the category of positive-definite left (resp.~right) hermitian $R$-lattices.  
Similarly as before, we have an equivalence of categories from $\mathrm{Lat}_R^H$ to $\prescript{}{R}{\mathrm{Lat}}^H$.

Given an abelian variety $A$, we call an element $\lambda\in \Hom(A,A^t)\otimes \Q$ a \emph{$\Q$-polarization} on  $A$ if $N\lambda$ is a polarization for some positive integer $N$; $\lambda$ is called \emph{integral} if $N$ can be taken to $1$, that is, $\lambda$ is a polarization. A $\Q$-polarized abelian variety is an abelian variety $A$ together with a $\Q$-polarization. Let $\calA_E^{\rm pol}$ denote the category of $\Q$-polarized abelian varieties over $\Fq$ such that $A$ is isogenous to a power of $E$. Morphisms in $\calA_E^{\rm pol}$ are morphisms of abelian varieties that preserve the $\Q$-polarizations (not up to a positive scarlar of $\Q$). 

To each $\Q$-polarized abelian variety $(A,\lambda)$ in $\calA^{\rm pol}_E$, we attach a pair $(M,h)$, where 
\begin{equation}\label{eq:M}
   M:=\Hom(E,A) 
\end{equation}
is a right $R$-lattice, and 
\begin{equation}\label{eq:h}
    h=h_\lambda: M_\Q \otimes M_\Q \to B, \quad h_\lambda(m_1,m_2):=\lambda_E^{-1} m_1^t \lambda m_2
\end{equation}
is a pairing on $M_\Q:=M\otimes \Q$. Then $(M,h)$ is a positive-definite hermitian $R$-lattice. The hermitian form $h$ is integral on $M$ if and only if $\lambda$ is a polarization, and it is perfect if and only if $\lambda$ is a principal polarization; see \cite[Lemma 4.4]{Ibukiyama-Karemaker-Yu-2025}. 

\begin{thm}[{\cite[Corollary 4.6]{Ibukiyama-Karemaker-Yu-2025}}]\label{thm:IKY-JKPRST}  
Let $E$ be an elliptic curve over $\Fq$ with Frobenius endomorphism $\pi$ and endomorphism ring $R = \mathrm{End}(E)$. The functor $\Hom(E,-): \calA_E^{\rm pol} \to {\rm Lat}_R^H$ induces an equivalence of categories if and only if one of the following holds:
\begin{itemize}
    \item $E$ is ordinary and $\Z[\pi]=R$;
    \item $E$ is supersingular, $\Fq=\Fp$ and $\Z[\pi]=R$; or
    \item $E$ is supersingular, $\Fq=\F_{p^2}$ and $R$ has rank $4$ over $\mathbb{Z}$.
\end{itemize}  
\end{thm}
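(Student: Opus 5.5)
The plan is to split the statement into an unpolarized part and a polarization part. First, we reduce to the known unpolarized equivalence: by Jordan et al.~\cite{JKPRST}, the functor $A\mapsto \Hom(E,A)$ from the category of abelian varieties over $\Fq$ isogenous to a power of $E$ to the category of right $R$-lattices is an equivalence exactly in the three listed cases. In each case $R$ is a Gorenstein or Bass order for which every torsion-free module is reflexive, and $\Hom(E,-)$ has quasi-inverse $M\mapsto M\otimes_R E$. Outside those cases the functor either fails to be essentially surjective or fails to be full. For example, if $\Z[\pi]\subsetneq R$, a module over $\Z[\pi]$ not stable under $R$ is not realized, or the Tate module over $\Z_\ell[\pi]$ is not determined by $\Hom(E,A)$. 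We take this unpolarized dichotomy as the input.

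Second, we upgrade to polarizations. The key point is to identify the dual abelian variety. We will show that, under the equivalence, $A^t$ corresponds to the dual lattice $M^\ast=\Hom_R(M,R)$ with the twisted $R$-action induced by $\sigma$. Concretely, $m\mapsto \lambda_E^{-1}m^t$ gives an isomorphism $\Hom(E,A^t)\simeq \Hom_R(\Hom(E,A),R)$, which we check first for $A=E^n$ and then extend to general $A$ by isogeny and reflexivity. Granting this, a homomorphism $\lambda:A\to A^t$ corresponds to a sesquilinear pairing $h_\lambda$ on $M$, and the symmetry $\lambda^t=\lambda$ corresponds to $h(y,x)=h(x,y)^\sigma$. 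Positivity of the Rosati involution then makes $h_\lambda$ positive-definite exactly when $\lambda$ is a $\Q$-polarization. Conversely, a positive-definite hermitian form on $M_\Q$ yields a symmetric element of $\Hom(A,A^t)\otimes\Q$. We will check that this element is ample by verifying that its Rosati trace form is positive, using Mumford's criterion that a symmetric $\Q$-homomorphism with positive-definite associated form is a $\Q$-multiple of a polarization.

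Third, morphisms preserving the polarizations correspond to isometries, since $h_{\alpha^*\lambda}(m_1,m_2)=h_\lambda(\alpha m_1,\alpha m_2)$. This gives full faithfulness on the polarized categories, and essential surjectivity follows from the two previous steps. For the converse direction, if the unpolarized functor is not an equivalence, we exhibit a polarized counterexample. We take a non-realized lattice, or a non-full morphism, and equip it with any positive-definite hermitian form, for instance one restricted from the standard form on $R^n$. This shows that the polarized functor fails to be an equivalence as well.

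The main obstacle is the duality identification $\Hom(E,A^t)\simeq M^\ast$ in the supersingular case over $\Fp$ with $R=\Z[\pi]$ non-maximal. There, projectivity fails and the comparison must be carried out prime by prime. We will do this on Tate modules for $\ell\ne p$, and on Dieudonn\'e modules at $p$, where the Gorenstein property of $\Z_p[\pi]$ gives self-duality.
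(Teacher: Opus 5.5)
Your architecture matches the argument behind the cited result: the polarized equivalence on the essential image, \cite[Proposition~4.5]{Ibukiyama-Karemaker-Yu-2025}, built on the unpolarized theorem of Jordan et al. Your Steps 2--3 for the ``if'' direction are fine. However, the step you single out as the main obstacle is purely formal once the unpolarized functor is fully faithful:
\[
\Hom(E,A^t)\xrightarrow{\ \alpha\mapsto\lambda_E^{-1}\alpha^t\ }\Hom(A,E)\xrightarrow{\ f\mapsto f_*\ }\Hom_R\bigl(\Hom(E,A),\Hom(E,E)\bigr)=\Hom_R(M,R).
\]
The first map is bijective by biduality and because $\lambda_E$ is an isomorphism; the second is bijective by full faithfulness. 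This is exactly how the paper obtains $\Hom(E,A^t)\simeq M^t$ in the lemma characterizing $\ker\lambda=A[\pi]$. No Tate- or Dieudonn\'e-module comparison is needed. Your local Gorenstein argument would work, but it re-proves part of Jordan et al. Also, for $\Fq=\Fp$ and $\pi^2=-p$ the order $\Z_p[\pi]$ is already maximal; any non-maximality of $\Z[\pi]$ sits at $\ell=2$, not at $p$.

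The genuine gap is in the converse.

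First, a ``non-realized lattice'' does not exist. By Jordan et al.\ every $R$-lattice is realized (via $\mathcal{H}om_R(-,E)$ and duality). So $\Hom(E,-)$ is always essentially surjective and faithful. Outside the three cases it fails only to be full, because some $A\in\calA_E$ lies outside the essential image.

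Second, ``equipping a non-full morphism with any positive-definite form'' does not by itself give a morphism in $\mathrm{Lat}_R^H$ between images of polarized objects. The forms must make the non-induced map an isometry, and they must come from $\Q$-polarizations on the given abelian varieties. Your Step 2 only provides this in the good cases.

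A correct version runs as follows.
\begin{itemize}
\item Take $A$ outside the essential image, put $M=\Hom(E,A)$, and let $A'$ realize $M$. Then $A'\not\simeq A$, since $A'$ lies in the essential image and $A$ does not.
\item Any $R$-isomorphism $\beta\colon \Hom(E,A')\to M$ equals $\phi_*$ for a quasi-isogeny $\phi\colon A'\to A$, because $\Hom(E,-)$ is fully faithful on isogeny categories.
\item For any polarization $\lambda$ on $A$, $\phi^*\lambda=\phi^t\lambda\phi$ is a $\Q$-polarization on $A'$ with $h_{\phi^*\lambda}(m_1,m_2)=h_\lambda(\beta m_1,\beta m_2)$.
\item Hence $\beta$ is an isometry between the images of the non-isomorphic objects $(A',\phi^*\lambda)$ and $(A,\lambda)$.
\end{itemize}
So the polarized functor does not reflect isomorphisms and is not an equivalence.
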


Recall that two $\Q$-polarized abelian varieties $(A_1,\lambda_1)$ and $(A_2,\lambda_2)$ over $\Fq$ are said to  \emph{belong to the same genus} if they are isogenous and for every prime $\ell$ including $p$, their associated $\Q_\ell$-polarized $\ell$-divisible groups $(A_1,\lambda_1)[\ell^\infty]$ and $(A_2,\lambda_2)[\ell^\infty]$ are isomorphic over $\Fq$.

Let ${\mathcal Hom}_R(-,E): \prescript{}{R}{\rm Lat}^{\rm opp} \to \calA_E$ denote the sheaf Hom functor constructed in~\cite{JKPRST}, where $\calA_E$ denotes the category of abelian varieties over $\Fq$ that are isogenous to a power of $E$.  
For each left $R$-lattice $M$, the functor ${\mathcal Hom}_R(M,E)$ attaches to each $\Fq$-scheme $S$ the abelian group $\Hom_R(M,E(S))$, and it is representable by an abelian variety $A_M$ isogenous to a power to $E$. The functor ${\mathcal Hom}_R(-,E)$ is fully faithful with inverse functor $\Hom(-,E)$. 
Let $\calA_{E,{\rm ess}}^{\rm pol}$ be the full subcategory of $\calA_E^{\rm pol}$ consisting of objects $(A,\lambda)$ whose underlying abelian variety $A$ lies in the essential image of ${\mathcal Hom}_R(-,E)$. By \cite[Proposition~4.5]{Ibukiyama-Karemaker-Yu-2025}, the functor 
\begin{equation}\label{eq:Hom}
   \Hom(E,-): \calA_{E,{\rm ess}}^{\rm pol}\to {\rm Lat}_R^H 
\end{equation}
gives an equivalence of categories.

\begin{prop}\label{prop:genera}
    Any two objects $(A_1,\lambda_1)$ and $(A_2,\lambda_2)$ in $\calA_{E,{\rm ess}}^{\rm pol}$ belong to the same genus if and only if their attached hermitian $R$-lattices $(M_1,h_1)$ and $(M_2,h_2)$ belong to the same genus. 
\end{prop}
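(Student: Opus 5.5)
The plan is to compare both notions of genus prime by prime, using the equivalence of categories \eqref{eq:Hom} for the global isogeny condition and a local version of the same $\Hom(E,-)$ construction at each prime $\ell$.

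\textbf{Step 1: isogeny versus rational isometry.} Since $\Hom(E,-)$ in \eqref{eq:Hom} is an equivalence, it induces an equivalence after tensoring morphisms with $\Q$. On the abelian variety side this gives the category of $\Q$-polarized abelian varieties up to quasi-isogeny. On the lattice side it gives hermitian $B$-spaces with isometries. Hence $(A_1,\lambda_1)\sim(A_2,\lambda_2)$ if and only if $(M_{1,\Q},h_1)\simeq(M_{2,\Q},h_2)$ as hermitian $B$-spaces.

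\textbf{Step 2: comparing local data.} After fixing a quasi-isogeny, view both $M_1$ and $M_2$ as lattices in the same hermitian space $(V,h)$. For $\ell\neq p$, I would use the isomorphism
\[ M\otimes\Z_\ell=\Hom(E,A)\otimes\Z_\ell\isoto \Hom_{\Z_\ell[\pi]}(T_\ell E,T_\ell A). \]
By Tate's theorem over $\Fq$, this is an isomorphism onto the Galois-equivariant homomorphisms. The pairing $h_\lambda$ then corresponds to the pairing induced by the Weil pairings of $\lambda_E$ and $\lambda$. Conversely, $T_\ell A$ is recovered from $M_\ell$ as $M_\ell\otimes_{R_\ell}T_\ell E$. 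This should use the fact that $A$ lies in the essential image of ${\mathcal Hom}_R(-,E)$, so that $A\simeq A_M$, together with the functorial formula $T_\ell A_M\simeq \Hom_R(M,T_\ell E)$ from \cite{JKPRST}. With these identifications, a $\Q_\ell$-polarized isomorphism $(T_\ell A_1,\lambda_1)\simeq(T_\ell A_2,\lambda_2)$ commuting with Frobenius corresponds exactly to an isometry $M_{1,\ell}\simeq M_{2,\ell}$ of hermitian $R_\ell$-lattices.

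At $\ell=p$, I would do the same with Dieudonn\'e modules, or equivalently $p$-divisible groups over $\Fq$. Here $M_p\simeq\Hom(E[p^\infty],A[p^\infty])$ by Tate's theorem for $p$-divisible groups over finite fields, and $A[p^\infty]\simeq M_p\otimes_{R_p}E[p^\infty]$. The latter again holds because $A=A_M$ and the sheaf-Hom construction commutes with passing to $p$-divisible groups.

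\textbf{Step 3: conclusion.} Combining Steps 1 and 2 prime by prime gives the equivalence. Note that the isometries $M_{1,\ell}\simeq M_{2,\ell}$ need not lie in a common ambient space for the genus definition, and this is consistent with the $\ell$-divisible group isomorphisms, which are also arbitrary.

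The main obstacle is the case $\ell=p$, together with the claim that the local functor $M_\ell\mapsto M_\ell\otimes_{R_\ell}E[\ell^\infty]$ is fully faithful on the relevant objects. I would reduce this to the statement that $\Hom(E,-)$ commutes with $\ell$-adic completion for $A$ in the essential image. I would prove that statement by writing $M$ as a direct summand of a module built from a finite presentation $R^a\to R^b\to M\to 0$. Then ${\mathcal Hom}_R(M,E)$ is a kernel of $E^b\to E^a$, and taking $\ell$-divisible groups is left exact, so the formula for $A_M[\ell^\infty]$ follows. Tate's theorem then identifies the Hom groups.
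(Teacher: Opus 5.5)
Your proposal is correct and rests on the same two inputs as the paper: Tate's theorem over $\Fq$ (including $\ell=p$) and full faithfulness of $\Hom(E,-)$. Your Step~1 is the paper's argument. The paper makes ``tensoring morphisms with $\Q$'' precise via the unpolarized isomorphism $\Hom(A_1,A_2)\otimes\Q\simeq\Hom_B(M_{1,\Q},M_{2,\Q})$ and the explicit formula $h_2(\beta m_1,\beta m_1')=\lambda_E^{-1}m_1^t\alpha^t\lambda_2\alpha m_1'$.

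For the local step you take a noticeably longer route. The paper never reconstructs $A[\ell^\infty]$ from $M_\ell$. It just chains natural isomorphisms
\[ \Hom(A_1[\ell^\infty],A_2[\ell^\infty])\simeq\Hom(A_1,A_2)\otimes\Z_\ell\simeq\Hom_R(M_1,M_2)\otimes\Z_\ell=\Hom_{R_\ell}(M_{1,\ell},M_{2,\ell}), \]
using Tate's theorem and then full faithfulness of $\Hom(E,-)$ on $\calA_{E,{\rm ess}}^{\rm pol}$. Since these isomorphisms are compatible with composition, they match isomorphisms with isomorphisms. The same pullback computation, with $\lambda_{i,\ell}$ in place of $\lambda_i$, then matches polarized isomorphisms with isometries. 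This treats all $\ell$ uniformly, with no Weil pairings, no Dieudonn\'e modules and no tensor-product formula.

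The ``main obstacle'' you identify is therefore not really there. The isomorphism $\Hom(E,A)\otimes\Z_\ell\simeq\Hom(E[\ell^\infty],A[\ell^\infty])$ is just Tate's theorem and holds for every $A$. What you actually need is full faithfulness on pairs $(A_1,A_2)$. Your closing sentence (``Tate's theorem then identifies the Hom groups'') delivers exactly that once you apply Tate to $\Hom(A_1,A_2)$ rather than to $\Hom(E,A)$.

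If you keep the reconstruction detour, fix a bookkeeping slip. The construction of \cite{JKPRST} realizes $A$ as $\mathcal{H}om_R(M',E)$, where $M'=\Hom(A,E)$ is a \emph{left} module. Your kernel presentation therefore yields $A[\ell^\infty]\simeq\mathcal{H}om_R(M',E[\ell^\infty])$, a Hom formula in $M'$. It does not yield $M_\ell\otimes_{R_\ell}E[\ell^\infty]$ with $M=\Hom(E,A)$. Converting one into the other needs $M\simeq\Hom_R(M',R)$ together with knowledge of $E[\ell^\infty]$ as an $R_\ell$-module, for example $T_\ell E\simeq R_\ell$ for $\ell\neq p$ when $R$ is quadratic. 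That is extra input you have not supplied.

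Your route does give an explicit description of the local objects. For this proposition, though, it only adds verification burden compared with the paper's direct Hom-level argument.
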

\begin{proof}
    The equivalence of categories~\eqref{eq:Hom} gives rise to a natural isomorphism
    \begin{equation}\label{eq:ff}
        \Hom(A_1,A_2)\xrightarrow[\alpha\mapsto \alpha_*]{\sim} \Hom_R(M_1,M_2),
    \end{equation}
    which in turn induces a natural isomorphism $\Hom(A_1,A_2)\otimes \Q \isoto \Hom_B(M_{1,\Q},M_{2,\Q})$. Let $\alpha\in \Hom(A_1,A_2)\otimes \Q$ be an isomorphism, and let $\beta=\alpha_*$ be the isomorphism corresponding to $\alpha$; one has $\beta(m_1)=\alpha m_1$ for $m_1\in M_{1,\Q}$. One computes that
    \[ h_2(\beta m_1, \beta m_1')=\lambda_E^{-1} m_1^t \alpha^t \lambda_2 \alpha m_1', \quad h_1(m_1,m_1')=\lambda_E^{-1} m_1^t  \lambda_1  m_1' \]
    for $m_1,m_1'\in M_{1,\Q}$. It follows that $\alpha^t \lambda_2 \alpha=\lambda_1$ if and only if $h_2(\beta m_1,\beta m_1')=h_1(m_1, m_1')$ for all $m_1, m_1'\in M_{1,\Q}$, so as $(A_1,\lambda_1)$ and
    $(A_2, \lambda_2)$ are isogenous if and only if $(M_{1,\Q},h_1)$ and $(M_{2,\Q},h_2)$ are isomorphic.

    We denote by $\lambda_{i,\ell}: A_i[\ell^\infty]\to A_i^t[\ell^\infty]$ the isogeny induced by $\lambda_i$ for every prime $\ell$. We have natural isomorphisms 
    \[ \begin{split}
        \Hom(A_1[\ell^\infty],A_2[\ell^\infty])&\simeq \Hom(A_1,A_2)\otimes \Z_\ell \\
        &\simeq \Hom_R(M_1,M_2)\otimes \Z_\ell \\
        &=\Hom_{R_\ell}(M_{1,\ell},M_{2,\ell}), \quad M_{i,\ell}:=M_i\otimes \Z_\ell.
    \end{split}\]
    The first isomorphism follows from Tate's theorem on homomorphisms of abelian varieties over finite fields (see \cite{tate:eav} and \cite[Theorem, p.~525]{waterhouse:thesis}), and the second isomorphism is given by the isomorphism \eqref{eq:ff}. Let $\alpha\in  \Hom(A_1[\ell^\infty],A_2[\ell^\infty])$ be an isomorphism, and let $\beta\in \Hom_{R_\ell}(M_{1,\ell},M_{2,\ell})$ be the isomorphism corresponding to $\alpha$. The same computation shows $\alpha^t \lambda_{2,\ell} \alpha=\lambda_{1,\ell}$ if and only if $h_2(\beta m_1,\beta m_1')=h_1(m_1, m_1')$ for all $m_1, m_1'\in M_{1,\ell}$. Thus, $(A_1,\lambda_1)[\ell^\infty]\simeq (A_2,\lambda_2)[\ell^\infty]$ if and only if $(M_{1,\ell},h_1)\simeq (M_{2,\ell},h_2)$. This proves the proposition.  
\end{proof}


Now take $\pi=\sqrt{-p}$, $\Fq=\Fp$, $R=\Z[\sqrt{-p}]$ and $B=\Q(\sqrt{-p})$.  Waterhouse shows in his thesis \cite[Theorem~4.2(3)]{waterhouse:thesis} that there exists an elliptic curve $E$ over $\Fp$ with $\End(E)=R$. We denote the canonical polarization of $E$ by $\lambda_E$ as usual.

\begin{lem}
Let $(A,\lambda)$ be a polarized abelian variety in $\calA_E^{\rm pol}$ and $(M,h)$ the attached positive-definite hermitian $R$-lattice. 
We have $\ker \lambda =A[\pi]$ if and only if $\pi M^\vee=M$, where $M^\vee\coloneqq \{x\in M\otimes \Q\mid h(x, M)\subseteq R\}$ is the dual lattice of $M$ with respect to the hermitian form $h$.
\end{lem}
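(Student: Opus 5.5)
We will translate the condition $\ker\lambda=A[\pi]$ into a statement about lattices by using the full faithfulness of $\Hom(E,-)$, via Theorem~\ref{thm:IKY-JKPRST}. Here $E$ is supersingular over $\Fp$ with $\Z[\pi]=R$, so the functor is an equivalence and every object of $\calA_E^{\rm pol}$ is essential.

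Since $\pi$ is central in $\End(A)$ (it is the Frobenius of $A$), we have $\pi_A=\pi$. Because $\pi\bar\pi=p$ and $\deg\pi_A=p^{n}$, where $n=\dim A$, the kernel $A[\pi]$ is a finite group scheme of order $p^{n}$. Both $\lambda$ and $\pi_A$ are isogenies of $A$. The condition $\ker\lambda=A[\pi]$ is equivalent to $\lambda$ factoring as $\lambda=\mu\circ\pi_A$ with $\mu:A\to A^t$ an isomorphism. Indeed, if the kernels agree, $\lambda$ factors through $A/A[\pi]\cong A$ via $\pi_A$, and the induced map is injective with the same degree, hence an isomorphism. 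Conversely, such a factorization forces the kernels to agree.

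Next we translate into $(M,h)$. Set $\lambda'=\lambda\pi_A^{-1}\in\Hom(A,A^t)\otimes\Q$. For $m_1,m_2\in M_\Q$ we compute
\[ h_{\lambda'}(m_1,m_2)=\lambda_E^{-1}m_1^t\lambda\pi_A^{-1}m_2=\lambda_E^{-1}m_1^t\lambda m_2\pi^{-1}=h(m_1,m_2)\pi^{-1}, \]
using $\pi_A^{-1}m_2=m_2\pi^{-1}$, since homomorphisms commute with Frobenius. The form $h_{\lambda'}$ need not be hermitian; it is sesquilinear with a twist. The criterion of \cite[Lemma 4.4]{Ibukiyama-Karemaker-Yu-2025} (integral iff polarization, perfect iff principal) is really a statement about $\lambda'$ being a homomorphism or an isomorphism. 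We check this directly. Under the equivalence, an element $\mu\in\Hom(A,A^t)\otimes\Q$ is an isomorphism $A\to A^t$ if and only if the map $M\to\Hom(E,A^t)\cong\Hom(E^t,A^t)$ it induces is an isomorphism of lattices. Via $\lambda_E$ and the duality $\Hom(E,A^t)\cong \Hom_R(M,R)$ given by $m'\mapsto(m\mapsto\lambda_E^{-1}m^tm')$, this map identifies with $x\mapsto h_\mu(x,\cdot)$ from $M$ to the dual lattice. So $\mu$ is an isomorphism if and only if $\{x: h_\mu(x,M)\subseteq R\}=M$, i.e. $h_\mu$ is perfect on $M$.

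Applied to $\mu=\lambda'$, this gives: $\ker\lambda=A[\pi]$ if and only if $\{x\in M_\Q: h(x,M)\pi^{-1}\subseteq R\}=M$. The left side equals $\{x: h(x,M)\subseteq R\pi\}$. By sesquilinearity, $h(x,M)\subseteq \pi R$ is equivalent to $h(\bar\pi^{-1}x,M)\subseteq R$, since in the right-hermitian convention scalars in the first slot come out conjugated. Hence the set is $\bar\pi M^\vee=-\pi M^\vee=\pi M^\vee$, and we conclude $\pi M^\vee=M$.

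The main obstacle is the middle step: justifying rigorously that the isomorphism property of $\lambda'$ matches perfectness of the twisted pairing. This requires the identification $\Hom(E,A^t)\cong\Hom_R(M,R)$, the dual-lattice compatibility from \cite{JKPRST}/\cite{Ibukiyama-Karemaker-Yu-2025}. We would handle it by citing \cite[Lemma 4.4]{Ibukiyama-Karemaker-Yu-2025}'s proof, which treats arbitrary $\mu\in\Hom(A,A^t)$, not only symmetric ones. Alternatively, we can avoid this by an index count. $M^\vee\supseteq M$ since $h$ is integral, and $[M^\vee:M]=\deg\lambda$. So $\ker\lambda\supseteq A[\pi]$ corresponds to $\pi M^\vee\subseteq M$, and equality of degrees $p^n$ then forces equality of lattices.
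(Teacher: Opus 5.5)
Your proposal is correct and is essentially the paper's argument: both factor $\lambda$ through $\pi_A$ by an isomorphism ($\mu$ for you, $\gamma=\mu^{-1}$ in the paper), apply $\Hom(E,-)$ with the JKPRST identification $\Hom(E,A^t)\cong\Hom_R(M,R)=M^\vee$ (under which $\lambda_*$ is the inclusion $M\subseteq M^\vee$), and observe that the induced map, multiplication by $\pi^{\pm 1}$, is an isomorphism exactly when $\pi M^\vee=M$. Two small quibbles: with your stated duality the induced map is $x\mapsto h_\mu(\cdot,x)$ rather than $h_\mu(x,\cdot)$ (harmless, since $\bar\pi=-\pi$ gives $\pi M^\vee$ either way), and your fallback index count still needs that same identification to get $[M^\vee:M]=\deg\lambda$, so it does not actually avoid it.
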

\begin{proof}
Although $R$ is commutative, we still distinguish left and right $R$-modules to keep track of the functoriality properties. 
Since $M$ is a right $R$-module, its dual module $M^t\coloneqq \Hom_R(M, R)$ is naturally a left $R$-module. As discussed before, we equip it with a right $R$-module structure as follows:
\[ (f\cdot a)(m)\coloneqq a^\sigma f(m), \qquad \forall f\in M^t, a\in R,  m\in M.\]
 From the original unpolarized version of Theorem~\ref{thm:IKY-JKPRST} (namely, \cite[Theorem~1.1]{JKPRST}), the right $R$-lattice $\Hom(E, A^t)$ attached to the dual abelian variety $A^t$ is canonically isomorphic to $M^t$, where each $\alpha\in \Hom(E, A^t)$ corresponds to 
$(\lambda_E^{-1}\alpha^t)_*\in M^t$.
The polarization $\lambda:A\to A^t$ gives rise to an $R$-linear inclusion  
$\lambda_*: M\embed M^t$, which  coincides with the map induced by the hermitian form $h$ sending each $m_1\in M$ to $h(m_1, -)\in M^t$. 
Observe that $\ker \lambda =A[\pi]$ if and only if there exists an isomorphism $\gamma: A^t\to A$ such that $\gamma\lambda=\pi$.
We further identify $M^t$ with $M^\vee$ and apply the equivalence functor $\Hom(E, -): \calA_E\to \mathrm{Lat}_R$ to transform the desired commutative diagram  on the left hand side of \eqref{eqn:cds} to a  commutative diagram of $R$-lattices on the right hand side:
\begin{equation}\label{eqn:cds}
\begin{tikzcd}
    A \ar[rd, "\lambda"']\ar[rr, "\pi"] & & A\\
    & A^t\ar[ru, "\exists \gamma"', "\simeq", dashrightarrow] & 
\end{tikzcd}\qquad \qquad
\begin{tikzcd}
    M\ar[r, "\pi"]\ar[d, "\lambda_*"'] & M\\
    M^t\ar[r, equal] & M^\vee \ar[u, dashrightarrow, "\exists \gamma_*"', "\simeq"]
\end{tikzcd}    
\end{equation}
Since the composition $M\xrightarrow{\lambda_*}M^t=M^\vee$ is just the canonical inclusion, there exists an isomorphism $\gamma_*: M^\vee\to M$ such that the diagram on the right hand side of \eqref{eqn:cds} commutes if and only if $\pi M^\vee=M$.
\end{proof}

\begin{prop}\label{prop:wtSigma}
    The functor $\Hom(E,-):\calA_{E}^{\rm pol}\to {\rm Lat}_R^H$ gives rise to a bijection between the set $\wt \Sigma_n(\sqrt{-p})$ and the set of isomorphism classes of positive-definite hermitian $\Z[\sqrt{-p}]$-lattices of rank $n$ such that $\sqrt{-p} M^\vee=M$. Morevoer, any two members in $\wt \Sigma_n(\sqrt{-p})$ belong to the same genus if and only if their associated positive-definite hermitian $R$-lattices belong to the same genus. 
\end{prop}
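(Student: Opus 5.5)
We combine Theorem~\ref{thm:IKY-JKPRST}, the preceding lemma, and Proposition~\ref{prop:genera}. Take $E$ over $\Fp$ with $\pi_E=\sqrt{-p}$ and $\End(E)=R=\Z[\sqrt{-p}]$, as provided by Waterhouse. Then $E$ is supersingular, $\Fq=\Fp$ and $\Z[\pi]=R$, so Theorem~\ref{thm:IKY-JKPRST} says that $\Hom(E,-):\calA_E^{\rm pol}\to {\rm Lat}_R^H$ is an equivalence of categories. In particular $\calA_{E,{\rm ess}}^{\rm pol}=\calA_E^{\rm pol}$, and Proposition~\ref{prop:genera} applies to all objects of $\calA_E^{\rm pol}$.

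The first step is to show that every member $(A,\lambda)$ of $\wt\Sigma_n(\sqrt{-p})$ lies in $\calA_E^{\rm pol}$, i.e.\ that $A$ is $\Fp$-isogenous to $E^n$. Since $\pi_A^2=-p$, the minimal polynomial of $\pi_A$ is $x^2+p$, which is the characteristic polynomial of $\pi_E$. Hence the characteristic polynomial of $\pi_A$ is $(x^2+p)^n$: it is a power of the minimal polynomial, and its degree is $2\dim A=2n$. By Tate's isogeny theorem, $A\sim E^n$ over $\Fp$. Conversely, let $M$ be a positive-definite hermitian $R$-lattice of rank $n$ with $\sqrt{-p}M^\vee=M$. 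By the equivalence it comes from some $(A,\lambda)$ with $A\sim E^n$, so $\dim A=n$ and $\pi_A=\sqrt{-p}$ acts as $\pi$, giving $\pi_A^2=-p$. Integrality of $h$ on $M$ holds because $M=\sqrt{-p}M^\vee\subseteq M^\vee$, so $\lambda$ is a genuine polarization by \cite[Lemma 4.4]{Ibukiyama-Karemaker-Yu-2025}. The preceding lemma then gives $\ker\lambda=A[\pi]$. We also have to check that $A$ is superspecial. Superspecial abelian varieties are exactly those with $a(A)=n$, and an abelian variety $A$ over $\Fp$ with $\pi_A^2=-p$ has $F^2=-p$ up to the identification $A^{(p)}=A$. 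From this, $\ker F\supseteq A[F]$ with $F\circ F=-p$ forces $\ker F=\ker V$; equivalently, the Dieudonn\'e module satisfies $FM=VM$, which is the superspecial condition. This is the point where I expect some care is needed: one can cite \cite[Lemma~2.1]{yu:field-of-def-2017} or the standard fact that $\pi^2=\pm p$ implies superspecial. Since the translation from the dimension condition is immediate (rank $M=\dim A$ because $\rk_\Z \Hom(E,A)=2n$), we obtain the bijection on isomorphism classes. $\Fp$-isomorphisms of polarized varieties correspond exactly to isometries of hermitian lattices, because the equivalence respects morphisms preserving polarizations.

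Finally, the genus statement follows directly from Proposition~\ref{prop:genera}, since all objects concerned lie in $\calA_{E,{\rm ess}}^{\rm pol}=\calA_E^{\rm pol}$. The main obstacle is the superspeciality verification above; everything else is formal bookkeeping with the equivalence.
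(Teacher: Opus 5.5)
Your proof is correct and is essentially the argument the paper has in mind. The paper states this proposition without a separate proof, deducing it from Theorem~\ref{thm:IKY-JKPRST} (applied to Waterhouse's $E$ with $\End(E)=\Z[\sqrt{-p}]$), the preceding lemma, Proposition~\ref{prop:genera}, and the observation that every member of $\wt\Sigma_n(\sqrt{-p})$ is $\Fp$-isogenous to $E^n$. The superspeciality check you flag, which the paper leaves implicit, is cleanest stated as follows: $F^2=-p=-VF$ and $F$ is an isogeny, so $V=-F$; hence $FM=VM$ on the Dieudonn\'e module and $a(A)=n$, and no further citation is needed.
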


\section{$\sqrt{-p}$-modular hermitian $O_K$-lattices}\label{sec:3}

In this section, we fix a prime $p\in \bbN$ and write $K$ for the
imaginary quadratic field $\Q(\sqrt{-p})$.  Let $\sigma$ be the unique nontrivial element of $\Gal(K/\Q)$, which sends each $a\in K$ to its complex conjugate $a^\sigma$.   Let $V$ be a nonzero
$K$-vector space of finite dimension $n\coloneqq \dim V$.  Suppose that $V$
is equipped with a positive-definite hermitian form
$\varphi: V\times V\to K$ that is linear in the first variable
and anti-linear in the second variable.    

\begin{defn}
The \emph{determinant}
$d\varphi$ of the hermitian space $(V, \varphi)$ is defined to be the element of
$\Q^\times/\Nm_{K/\Q}(K^\times)$ represented by the determinant of the
Gram matrix $[\varphi(e_i, e_j)]_{1\leq i,j\leq n}$ for a $K$-basis
$\{e_i\}_{1\leq i \leq n}$ of $V$, which is 
 independent of the choice of the
basis. Here $\Nm_{K/\Q}: K\to \Q$ denotes the norm map, which is 
abbreviated as $\Nm:K\to \Q$.  
\end{defn}
By the positive-definiteness
assumption on $\varphi$, we have
$d\varphi\in \Q_{+}^\times/\Nm(K^\times)$, where $\Q_{+}^\times$
denotes the multiplicative group of positive rational numbers. Given $a\in \Q^\times_+$, we write $[a]$ for its class $a\Nm(K^\times)\in \Q_{+}^\times/\Nm(K^\times)$.
From
\cite[Theorem~2.2]{Shimura-HermForm-2008} or
\cite[Examples~10.1.6(iv)]{Scharlau-Quad-Herm}, the positive-definite
hermitian space $(V, \varphi)$ is uniquely determined up to isomorphism by its dimension
$n$ and determinant $d\varphi\in \Q_{+}^\times/\Nm(K^\times)$.

Let $O_K$ be the ring of integers of
$K$.  By \emph{an $O_K$-lattice in $V$}, we mean exclusively an $O_K$-lattice of \emph{full rank}, that is,  a finitely generated $O_K$-module in $V$ that contains a $K$-basis of $V$.  Similarly, we define the notion of $\Z[\sqrt{-p}]$-lattice in $V$.  Clearly, each $O_K$-lattice is automatically a $\Z[\sqrt{-p}]$-lattice. In light of Proposition~\ref{prop:wtSigma}, we are interested in the $\Z[\sqrt{-p}]$-lattices $L$ with $\sqrt{-p}L^\vee=L$, where $L^\vee\coloneqq\{\, x\in V\, | \, \varphi(x,L)\subseteq \Z[\sqrt{-p}]\, \}$ is the $\Z[\sqrt{-p}]$-valued dual lattice defined in \eqref{eqn:1.3}.
Observe that $\Z[\sqrt{-p}]$ coincides with $O_K$ if $p\not\equiv 3\pmod{4}$, and $\Z[\sqrt{-p}]$ is a suborder of index $2$ in $O_K$ if $p\equiv 3\pmod{4}$. For simplicity, we focus on $O_K$-lattices in this section and leave the more difficult $\Z[\sqrt{-p}]$-lattices for which  $\Z[\sqrt{-p}]\subsetneq O_K$ to section~\ref{sec:4}. For an $O_K$-lattice in $(V, \varphi)$, it is conceptually simpler to consider the $O_K$-valued dual lattice than the $\Z[\sqrt{-p}]$-valued dual lattice.


\begin{defn}\label{defn:mod-OK-lattice}
Given an $O_K$-lattice $L$ in $V$, we
define its \emph{$O_K$-valued  dual lattice} $L^{\star, \varphi}$ with respect to $\varphi$
as
\begin{equation}
  \label{eq:4}
L^{\star, \varphi}\coloneqq \{x\in V\mid \varphi(x, L)\subseteq O_K\}.  
\end{equation}
If $L^{\star, \varphi}=\gra^{-1}L$ for some fractional $O_K$-ideal
$\gra\subseteq K$, then we say that $L$ is
\emph{$\gra$-modular}.   In particular, we say that $L$ is \emph{$\sqrt{-p}$-modular} if $L=\sqrt{-p}L^{\star, \varphi}$. 
\end{defn}

If $p\not\equiv 3\pmod{4}$,  then $\Z[\sqrt{-p}]=O_K$  and hence $L^{\star, \varphi}=L^\vee$. For this case, the definition of  $\sqrt{-p}$-modular $O_K$-lattices is identical to the original one, which requires $\sqrt{-p}L^\vee=L$.   On the other hand, these two notions of $\sqrt{-p}$-modularity differ for the case $p\equiv 3\pmod{4}$, because 
\begin{equation}
  \label{eq:11}
  L^\vee=2L^{\star, \varphi}=L^{\star, \frac{1}{2}\varphi} \qquad \text{if}\quad p\equiv 3\pmod{4}.
\end{equation}
Indeed, in this case $2O_K$ is the maximal $O_K$-ideal contained in
$\Z[\sqrt{-p}]$. For every $x\in V$,  the set $\varphi(x, L)$ forms an $O_K$-module since $L$ is an $O_K$-lattice by assumption. Thus $L^\vee =\{x\in V\mid \varphi(x, L)\subseteq  2O_K\}$, from which \eqref{eq:11} follows directly. Nevertheless, this discrepancy cause no extra trouble because of the following lemma. 
\begin{lem}\label{lem:two-duals}
For $p\equiv 3\pmod{4}$, there is a bijection between the following set of $O_K$-lattices:
\begin{equation}\label{eqn:bij-3.3}
    \left\{\parbox{3.6cm}{$O_K$-lattices $M$ in $(V, \varphi)$ with $\sqrt{-p}M^\vee=M$
      }\right\}\longleftrightarrow
  \left\{\parbox{3.6cm}{$O_K$-lattices $L$ in $(V, \varphi)$ with $\sqrt{-p}L^{\star, \varphi}=L$}\right\}
\end{equation}
  that preserves isometric classes and genera. 
\end{lem}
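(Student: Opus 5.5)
The idea is to absorb the factor $2$ in \eqref{eq:11} into an isometry of hermitian spaces rather than into a rescaling of lattices. Scaling $M$ by a scalar $c\in K^\times$, or by a fractional ideal, cannot work. Indeed $(cM)^{\star,\varphi}=(c^\sigma)^{-1}M^{\star,\varphi}$, so we would need $\Nm(c)=1/2$ up to units, and $2$ is not a norm from $K$ when $p\equiv 3\pmod 4$. Instead I would compare the two hermitian spaces $(V,\varphi)$ and $(V,\tfrac12\varphi)$.

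\textbf{Step 1 (reformulation).} By \eqref{eq:11}, an $O_K$-lattice $M$ satisfies $\sqrt{-p}M^\vee=M$ if and only if $\sqrt{-p}\,M^{\star,\frac12\varphi}=M$. In other words, $M$ is $\sqrt{-p}$-modular in the sense of Definition~\ref{defn:mod-OK-lattice}, but for the rescaled form $\tfrac12\varphi$. So the left-hand set in \eqref{eqn:bij-3.3} is exactly the set of $\sqrt{-p}$-modular $O_K$-lattices in $(V,\tfrac12\varphi)$.

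\textbf{Step 2 (an isometry of spaces).} The form $\tfrac12\varphi$ is positive definite of dimension $n$, and $d(\tfrac12\varphi)=2^{-n}d\varphi$. The lemma is only needed for even $n$, as in Theorem~\ref{thm:main.4}; I would state this restriction explicitly. For even $n$ we have $2^{-n}=\Nm(2^{-n/2})$, so $d(\tfrac12\varphi)=d\varphi$ in $\Q_+^\times/\Nm(K^\times)$. By the classification of positive-definite hermitian spaces cited above (\cite[Theorem~2.2]{Shimura-HermForm-2008}), there is therefore a $K$-linear isometry
\[ g:(V,\varphi)\isoto (V,\tfrac12\varphi), \qquad \tfrac12\varphi(gx,gy)=\varphi(x,y). \]
This could also be made explicit by pairing up an orthogonal basis two vectors at a time and using $x\mapsto$ a $2\times 2$ matrix of determinant $2^{-1}$ up to norms. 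The abstract existence suffices.

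\textbf{Step 3 (the bijection).} Define $M\mapsto L:=g^{-1}(M)$. Since $g$ is an isometry, dual lattices correspond: $g^{-1}\bigl(M^{\star,\frac12\varphi}\bigr)=L^{\star,\varphi}$. Hence $\sqrt{-p}M^{\star,\frac12\varphi}=M$ if and only if $\sqrt{-p}L^{\star,\varphi}=L$, using $K$-linearity of $g$ to move $\sqrt{-p}$ across. The inverse map is $L\mapsto g(L)$.

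\textbf{Step 4 (isometry classes and genera).} If $M'=\tau(M)$ for an isometry $\tau$ of $(V,\tfrac12\varphi)$, equivalently of $(V,\varphi)$, since the isometry groups of $\varphi$ and $\tfrac12\varphi$ coincide, then $g^{-1}(M')=(g^{-1}\tau g)(g^{-1}M)$, and $g^{-1}\tau g$ is an isometry of $(V,\varphi)$. The same argument applied to the completions $g_\ell=g\otimes\Q_\ell$ at every prime $\ell$ shows that genera are preserved. Here one also notes that lattices isometric for $\varphi$ are the same as lattices isometric for $\tfrac12\varphi$.

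The only nontrivial point is Step 2, namely that $(V,\varphi)\simeq(V,\tfrac12\varphi)$. This is where even dimension and the global classification are used. Everything else is formal.
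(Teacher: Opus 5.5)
Your Steps 1--4 are the paper's argument, and they are correct for even $n$. You rewrite the condition via \eqref{eq:11} as $\sqrt{-p}$-modularity for $\tfrac12\varphi$, use $d(\tfrac12\varphi)=2^{-n}d\varphi=d\varphi$ to get an isometry $(V,\varphi)\simeq(V,\tfrac12\varphi)$, and transport lattices along it.

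\textbf{The gap: odd $n$.} The lemma has no parity hypothesis on $n$, and instead of proving the odd case you propose to add a restriction. This is not cosmetic, because your Step~2 actually fails for odd $n$ when $p\equiv 3\pmod 8$. There $d(\tfrac12\varphi)=[2]\,d\varphi\neq d\varphi$, since $2$ is inert and $(2,-p)_2=-1$, so no isometry $g$ exists.

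The paper handles odd $n$ by showing both sets are empty. Since $\Z_p[\sqrt{-p}]=O_{K_p}$, the $\Z[\sqrt{-p}]$-valued and $O_K$-valued duals agree after completing at $p$. Hence any lattice in either set has completion at $p$ a $\sqrt{-p}$-modular $O_{K_p}$-lattice, and such lattices have even rank by Lemma~\ref{lem:even-rk}. Adding this one observation completes your proof.

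\textbf{A correction to your motivating remark.} It is not true that $2$ fails to be a norm from $K$ for all $p\equiv 3\pmod 4$. When $p\equiv 7\pmod 8$, $2\in\Nm(K^\times)$; for example $2=\Nm\bigl(\tfrac{1+\sqrt{-7}}{2}\bigr)$ when $p=7$, and the general case is in the proof of Lemma~\ref{lem:nec-cond-mod-latt}. In that case $L=cM$ with $\Nm(c)=\tfrac12$ already gives the bijection. Rescaling is genuinely impossible only for $p\equiv 3\pmod 8$. This does not affect your proof, which works uniformly in $p$.
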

\begin{proof}
 For each prime $\ell$, let $V_\ell\coloneqq V\otimes_\Q\Q_\ell$ be the $\ell$-adic completion of $V$, and $\varphi_\ell: V_\ell\times V_\ell\to K_\ell\coloneqq K\otimes_\Q\Q_\ell$ be the hermitian $K_\ell$-form on $V_\ell$ induced from $\varphi$. Since the order $\Z[\sqrt{-p}]$ is maximal at the odd prime $p$, for each $O_K$-lattice $L$ in $V$ we have 
 \[L^\vee\otimes\Z_p=\{x\in V_p\mid \varphi_p(x, L_p)\subseteq \Z_p[\sqrt{-p}]=O_{K_p}\}=L^{\star, \varphi}\otimes\Z_p.\]   
From Jacobowitz's classification \cite{Jacobowitz-HermForm}  of $\sqrt{-p}$-modular hermitian $O_{K_p}$-lattices to be recalled in Lemma~\ref{lem:even-rk}, both sides of \eqref{eqn:bij-3.3} are empty if $n$ is odd, in which case the lemma is vacuous. Now assume that $n$ is even.   From \eqref{eq:11}, an $O_K$-lattice $M$ in $(V, \varphi)$ satisfies
  $\sqrt{-p}M^\vee=M$ if and only if $M^{\star, \frac{1}{2}\varphi}=M$. Since $n$ is even, we have 
  $d(\frac{1}{2}\varphi)=d\varphi$ in
  $\Q_+^\times/\Nm(K^\times)$, so there exists an isometry 
  $\vartheta: (V, \frac{1}{2}\varphi)\to (V, \varphi)$ in this case. Therefore, an $O_K$-lattice $M\subset V$ belongs to the left hand side of \eqref{eqn:bij-3.3} if and only if its image $L\coloneqq \vartheta(M)$ belongs to the right hand side. This establishes the desired bijection, which preserves isometric classes and genera since $\vartheta$ is an isometry. 
\end{proof}



Lemma~\ref{lem:two-duals} allows us to treat $\sqrt{-p}$-modular $O_K$-lattices uniformly for all prime $p$ without worrying  about the distinction between $\Z[\sqrt{-p}]$-valued dual lattices and $O_K$-valued ones. For the rest of this section, we talk about $\sqrt{-p}$-modular $O_K$-lattices exclusively in the sense of Definition~\ref{defn:mod-OK-lattice}. 
If the hermitian form $\varphi$ is clear from the context, then we drop it from the superscript of 
 $L^{\star, \varphi}$ and simply write $L^\star$.
 

   \begin{defn}       
   The \emph{norm} $\grn(L)$ of a hermitian $O_K$-lattice $L$ in $(V, \varphi)$ is defined as the fractional $O_K$-ideal generated by the set $\{\varphi(x, x)\mid x\in L\}$.  At each prime $\ell$, the fractional $O_{K_\ell}$ norm ideal $\grn(L_\ell)$  is defined analogously. 
   \end{defn}
   The local and global norms fit together according to the rule 
   \begin{equation}\label{eqn:lg-norm}
       \grn(L)\otimes \Z_\ell=\grn(L_\ell),\qquad  \text{for every prime } \ell,  
   \end{equation}
from which it follows that $\grn(L)$ descends to be an invariant of the genus of $L$.

The main result of this section is as follows.

\begin{thm}\label{thm:main1}\begin{enumerate}[label={\upshape(\roman*)}, align=right, widest=iii,  leftmargin=*]
\item[{\rm (1)}] If there  exists a $\sqrt{-p}$-modular
   $O_K$-lattice $L$ in $(V, \varphi)$, then $n$ is even and $d\varphi=[1]$ in $\Q_{+}^\times/\Nm(K^\times)$.
\item[{\rm (2)}] Conversely, let $(V, \varphi)$ be an even dimensional hermitian $K$-space with $d\varphi=[1]$. There exists a $\sqrt{-p}$-modular
   $O_K$-lattice $L$ in $(V, \varphi)$ if and only if
   \begin{itemize}
       \item[{\rm (a)}] either $p\not\equiv 3\pmod{4}$, or
       \item[{\rm (b)}] $p\equiv 3\pmod{4}$ and $4\mid n$. 
   \end{itemize}
   \item[{\rm (3)}] Suppose that one of the conditions in \emph{(2)} holds so that there exists a $\sqrt{-p}$-modular $O_K$-lattice in $(V, \varphi)$. 
   \begin{itemize}
       \item[{\rm (a)}] If $p\not\equiv 3\pmod{4}$ and $4\mid n$, then there are two genera of $\sqrt{-p}$-modular $O_K$-lattices in $(V, \varphi)$, distinguished according to whether the norm of the lattice is equal to $pO_K$ or $2pO_K$. 
       \item[{\rm (b)}]  For the remaining cases,  there is a unique genus of such lattices,  for which the corresponding norm is $pO_K$. 
   \end{itemize}
   \end{enumerate}
\end{thm}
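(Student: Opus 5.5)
The proof runs local–global: classify the localizations $L_\ell$ with Jacobowitz's theory \cite{Jacobowitz-HermForm}, then glue them together using the Hasse principle for hermitian spaces.

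\emph{Local conditions.} Let $L$ be $\sqrt{-p}$-modular, so $L_\ell^\star=\sqrt{-p}^{-1}L_\ell$ for every prime $\ell$.

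For $\ell\neq p$ we have that $\sqrt{-p}$ is a unit at $\ell$, hence $L_\ell$ is unimodular.
\begin{itemize}
\item If $\ell$ is inert or split in $K$, a unimodular lattice exists in $V_\ell$ exactly when $d\varphi_\ell$ lies in $\Z_\ell^\times\Nm(K_\ell^\times)$. That subgroup is everything in the split case, and in the inert case it says $\operatorname{ord}_\ell d\varphi$ is even.
\item If $\ell=2$ is ramified (that is, $p\not\equiv3\pmod 4$), Jacobowitz's classification of unimodular lattices over the ramified dyadic extension $K_2/\Q_2$ applies. Such a lattice exists in every space whose determinant is a unit times a norm, and its norm ideal is either $O_{K_2}$ or $2O_{K_2}$ (the latter only in the "even" case).
\end{itemize}

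At $\ell=p$, the prime $\sqrt{-p}$ is ramified and non-dyadic, and $\sqrt{-p}$-modular lattices are orthogonal sums of hyperbolic planes $H(\sqrt{-p})$ (Lemma~\ref{lem:even-rk}). So $n$ is even, and $d\varphi_p$ is the class of $(-(\sqrt{-p})(-\sqrt{-p}))^{n/2}$. Concretely $H(\sqrt{-p})$ has Gram matrix $\begin{pmatrix}0&\sqrt{-p}\\ -\sqrt{-p}&0\end{pmatrix}$ with determinant $-p$, so $d\varphi_p=[(-p)^{n/2}]$ in $\Q_p^\times/\Nm(K_p^\times)$. Note that $-p=\Nm(\sqrt{-p})$ is a local norm, so this class is $[1]$ at $p$.

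\emph{Part (1).} The determinant of $L$ as a whole is $\det(\text{Gram})$, whose ideal is $(\sqrt{-p})^n$ times a unit, i.e. $p^{n/2}$ times a unit. Since $(-p)^{n/2}$ is a global norm, $d\varphi$ is represented by a unit times norms locally everywhere. Local norm-class computations at the inert primes, together with the product formula (Hilbert reciprocity) applied to the remaining primes, then show $d\varphi=[1]$. In fact $d\varphi\in\Q_+^\times$ with $\operatorname{ord}_\ell$ even at inert $\ell$ and trivial local classes at $p$ and at $2$ (when $2$ is ramified) is forced to be a global norm by reciprocity.

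\emph{Part (2).} The converse direction builds local lattices in $V_\ell$ for $d\varphi=[1]$:
\begin{itemize}
\item the standard unimodular lattice at $\ell\neq p$ with $\ell$ non-dyadic-ramified;
\item $H(\sqrt{-p})^{n/2}$ at $p$, which exists since the determinants match;
\item a unimodular lattice at $2$ when $2$ ramifies.
\end{itemize}
These glue to a global lattice because lattices with prescribed completions exist. The main obstacle is the case $p\equiv3\pmod4$, where $2$ is unramified in $K$ (split if $p\equiv7$, inert if $p\equiv3\pmod 8$). There $L_2$ must be unimodular, and $\sqrt{-p}$-modularity has to be read off through the normalization of Lemma~\ref{lem:two-duals}, where the form is effectively $\frac12\varphi$ or the reduction changes the relevant determinant. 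I expect the obstruction to come out as a local norm-symbol condition at $2$ (and at $\infty$) comparing $d\varphi=[1]$ with the forced value $[p^{n/2}]\cdot[\text{unit}]$. That would give $(-1)^{n/2}$-type sign conditions, i.e. $4\mid n$, via the Hilbert symbol $(p,-p)_2$ or $(-1,-p)_2$. My first attempt is to compute the Hasse invariants $\prod_\ell(d\varphi,-p)_\ell$ of the would-be lattice and see that the product equals $(-1)^{n/2}$ exactly when $p\equiv3\pmod4$.

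\emph{Part (3).} Genera are determined by the local isometry classes. At $\ell\neq2$ the classes are unique: unimodular lattices over unramified or split primes are unique, and the $\sqrt{-p}$-modular lattice at $p$ is unique by Jacobowitz. At $2$ the situation depends on $p$:
\begin{itemize}
\item When $2$ is unramified (in particular $p\equiv3\pmod4$), uniqueness again holds.
\item When $2$ is ramified, Jacobowitz's invariants are the rank, the determinant class and the norm ideal $\grn(L_2)\in\{O_{K_2},2O_{K_2}\}$. The even (norm $2O_{K_2}$) type is a sum of hyperbolic planes and exists exactly when its determinant $(-1)^{n/2}$ class matches $d\varphi_2=[1]$, which happens precisely when $4\mid n$.
\end{itemize}
Translating back by \eqref{eqn:lg-norm} gives the global norms $pO_K$ versus $2pO_K$, and hence two genera when $p\not\equiv3\pmod4$ and $4\mid n$, and one genus otherwise.
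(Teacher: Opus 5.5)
\textbf{There is a genuine gap.} Your overall strategy matches the paper's: local classification via Jacobowitz, Hilbert reciprocity, then gluing. But your computation at $p$ is wrong, and that computation is exactly what produces the condition $4\mid n$.

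\emph{The error at $p$.} You write that $-p=\Nm(\sqrt{-p})$ is a local norm, so $d\varphi_p$ is always trivial. In fact $\Nm(\sqrt{-p})=\sqrt{-p}\cdot(-\sqrt{-p})=p$. So the class of $H_p(1)^{n/2}$ in $\Q_p^\times/\Nm(K_p^\times)$ is $(-p)^{n/2}\equiv(-1)^{n/2}$. Moreover $(-1,-p)_p=\Lsymb{-1}{p}$ for odd $p$, and $(-1,-2)_2=-1$. Hence for $p\equiv 3\pmod 4$ and $d\varphi=[1]$ you get $1=(d\varphi,-p)_p=(-1)^{n/2}$, which is the whole content of the necessity in (2).

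\emph{Consequences for (2).} Because of this sign error, you look for the obstruction at $2$ and at $\infty$ through the $\frac12\varphi$ normalization of Lemma~\ref{lem:two-duals}. That normalization plays no role here, since Theorem~\ref{thm:main1} uses the $O_K$-valued dual. At $2$ (unramified when $p\equiv3\pmod4$) and at $\infty$ there is no obstruction at all. With your values, every local symbol would be $1$, and you would wrongly conclude that lattices exist for $p\equiv 3\pmod 4$, $n\equiv 2\pmod 4$.

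\emph{Consequences for (1).} Your claim that $(-p)^{n/2}$ is a global norm is false for $n\equiv2\pmod4$, since it is negative. The assumed triviality at a ramified $2$ is also unjustified: unimodular lattices exist there for every determinant. The correct argument runs as follows.
\begin{itemize}
\item The inert-prime condition puts $d\varphi$ in $\Grs$.
\item $\Grs$ is trivial unless $p\equiv1\pmod4$.
\item When $p\equiv1\pmod4$, the class $d\varphi_p=(-1)^{n/2}$ is a norm at $p$ because $\Lsymb{-1}{p}=1$. This rules out the generator $g$, which has $(g,-p)_p=-1$.
\end{itemize}
Equivalently, triviality at $2$ comes from reciprocity, not from a local condition at $2$.

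\emph{The case $p=2$.} Your sufficiency construction uses $H_2(1)^{n/2}$ at $p$. This fails when $n\equiv 2\pmod 4$, because its determinant class $(-1)^{n/2}$ is nontrivial in $\Q_2^\times/\Nm(K_2^\times)$ while $d\varphi_2=1$. You need Jacobowitz's R-P classification (Lemma~\ref{lem:RP-case}), which supplies the plane $\begin{bmatrix}-2&\sqrt{-2}\\-\sqrt{-2}&4\end{bmatrix}$ to use in place of one copy of $H_2(1)$.

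Part (3) has the same problem. Your statement that $\sqrt{-p}$-modular lattices are orthogonal sums of hyperbolic planes and unique at $p$ is Jacobowitz's Proposition~8.1, which holds only for odd $p$. For $p=2$ and $4\mid n$ there are two classes of $\sqrt{-2}$-modular $O_{K_2}$-lattices, of norms $2O_{K_2}$ and $4O_{K_2}$. This, not a classification of unimodular lattices at a ramified $2\neq p$, is what produces the two genera with norms $pO_K$ and $2pO_K$ when $p=2$. Your dyadic discussion in (3) is correct only for $p\equiv1\pmod4$.
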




 Since the positive-definite
hermitian space $(V, \varphi)$ is uniquely determined by its dimension
$n$ and determinant $d\varphi$, 
 part (1) of Theorem~\ref{thm:main1} shows that for a fixed even dimension $n$,   there is a unique  hermitian space up to isomorphism that admits a $\sqrt{-p}$-modular $O_K$-lattice. 
To prove this theorem,  we first need a better understanding of the group $\Q_{+}^\times/\Nm(K^\times)$. 

 From the fundamental theorem of arithmetic,
$\Q_+^\times$ is the free abelian group generated by set of all
rational primes, which in turn implies that 
$\Q_{+}^\times/\Nm(K^\times)$ is an elementary 2-group
generated by the set of primes distinct from $p$.  As usual, we write
$\Lsymb{K}{\ell}$ for the Artin symbol \cite[p.~94]{vigneras} of $K$
at a rational prime $\ell$, which takes value $\{1, 0, -1\}$
respectively according
to whether $K$ is split, ramified or inert at $\ell$.  When $\ell$ is
odd, $\Lsymb{K}{\ell}$ coincides with the Legendre symbol
$\Lsymb{-p}{\ell}$. On the other hand, if $p$ is an odd prime, then 
\begin{equation}
  \label{eq:1}
\Lsymb{K}{2}=
\begin{dcases}
  1 &\text{if}\quad p\equiv 7\pmod{8};\\
  0 &\text{if}\quad p\equiv 1\pmod{4};\\
 -1 &\text{if}\quad p\equiv 3\pmod{8}.\\
\end{dcases}
\end{equation}
 For each prime $\ell\in \bbN$, let $(a, b)_\ell\in \{\pm 1\}$ be the
   Hilbert symbol of a pair of nonzero elements  $a, b\in \Q_\ell$,  which takes value $1$ if and only
  if $a\in \Nm(\Q_\ell(\sqrt{b})^\times)$.   
   In particular, we have an embedding of abelian groups 
   \[ \iota_\ell: \Q_\ell^\times/\Nm(K_\ell^\times) \embed \{\pm 1\}, \quad a\Nm(K_\ell^\times)\mapsto (a, -p)_\ell\]
To describe the structure of $\Q_{+}^\times/\Nm(K^\times)$, we
introduce two subgroups of it.
\begin{defn}
    Let $\Gin$ be the  subgroup of
  $\Q_{+}^\times/\Nm(K^\times)$ generated by the set of inert
  primes $\Pin\coloneqq \left\{ \text{prime } \ell\in \bbN
    \,\middle\vert\, \Lsymb{K}{\ell}=-1\right \}$, and 
  $\Grs$ be the subgroup generated by the remaining ramified or split primes.
\end{defn}
By construction, we have
  $\Q_{+}^\times/\Nm(K^\times)=\Gin\cdot \Grs$.

\begin{prop}\label{prop:determinant}
  \begin{enumerate}[label={\upshape(\roman*)}, align=right, widest=iii,  leftmargin=*]
\item[{\rm (1)}]  The elementary $2$-group $\Q_{+}^\times/\Nm(K^\times)$ is
  a direct product of $\Gin$ and $\Grs$, and $\Pin$ is an
  $\F_2$-basis of
  $\Gin$.
\item[{\rm (2)}] If either $p=2$ or  $p\equiv 3\pmod{4}$, then $\Grs$ is trivial. 
\item[{\rm (3)}] If $p\equiv 1\pmod{4}$, then $\Grs$ is a cyclic group of
  order $2$ whose generator $g\Nm(K^\times)$ 
is uniquely determined by the  following conditions \begin{equation}
  \label{eq:9}
   (g, -p)_2=-1, \qquad   (g, -p)_p=-1, \quad
  \text{and}\quad (g, -p)_\ell=1 \quad\forall  \ell\not\in \{2, p\}.
\end{equation}
  \end{enumerate}
\end{prop}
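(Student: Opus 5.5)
The plan is to use the local-global principle for norms from the quadratic extension $K/\Q$ together with Hilbert reciprocity. By the Hasse norm theorem, $a\in\Q^\times$ is a norm from $K^\times$ if and only if it is a local norm everywhere. Hence the product of the maps $\iota_\ell$ gives an injection
\[\Q_+^\times/\Nm(K^\times)\embed \prod_{\ell}\{\pm1\},\qquad [a]\mapsto \bigl((a,-p)_\ell\bigr)_\ell .\]
At $\infty$ nothing is recorded, since $a>0$. For a prime $\ell\ne 2,p$ we have $(a,-p)_\ell=\Lsymb{-p}{\ell}^{v_\ell(a)}$. Thus when $\ell$ is split the symbol is $1$, and when $\ell$ is inert it is $(-1)^{v_\ell(a)}$.

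For part (1), first evaluate an inert prime $q\in\Pin$. Its image is $-1$ at $q$ and $1$ at every other odd prime $\ell\ne p$, because $q$ is a unit there. So $\Pin$ is $\F_2$-linearly independent, with independence read off at the coordinates $\ell\in\Pin$ themselves. This includes $\ell=2$ when $2$ is inert: then $K_2/\Q_2$ is unramified, and $(a,-p)_2=(-1)^{v_2(a)}$.

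Next, any element of $\Grs$ is a product of split and ramified primes. Its image is $1$ at every inert prime, because the valuation there is $0$. The only places where its image can be nontrivial are therefore $2$ and $p$, together with the split primes; at split primes the symbol is already trivial. Hence $\Gin\cap\Grs=1$, as a nontrivial element of $\Gin$ has a $-1$ at some inert coordinate. Combined with $\Q_+^\times/\Nm(K^\times)=\Gin\cdot\Grs$, this gives the direct product.

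For parts (2) and (3), an element $g\in\Grs$ can have nontrivial local symbols only at ramified places. The ramified places are $p$ always, and also $2$ exactly when $p\equiv 1\pmod 4$ or $p=2$. Hilbert reciprocity says $\prod_v(g,-p)_v=1$, and $(g,-p)_\infty=1$ since $g>0$.
\begin{itemize}
\item If $p=2$ or $p\equiv3\pmod 4$, there is only one finite ramified place. Reciprocity forces the symbol there to be $1$, so $g$ is a norm everywhere, and $\Grs=1$ by the Hasse norm theorem.
\item If $p\equiv1\pmod4$, the image of $\Grs$ lies in the set of vectors that are $\pm1$ at $\{2,p\}$ with equal signs and $1$ elsewhere. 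So $|\Grs|\le2$, and a nontrivial element must satisfy exactly \eqref{eq:9}; uniqueness follows from injectivity.
\end{itemize}

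The main obstacle is existence of the nontrivial element, i.e.\ exhibiting a positive product of split and ramified primes with $(g,-p)_p=-1$. I would try $g=2$ first. Since $2$ is ramified here, $g=2$ lies in $\Grs$, and $(2,-p)_p=\Lsymb{2}{p}$, which equals $-1$ when $p\equiv5\pmod8$. For $p\equiv1\pmod8$, instead take a split prime $\ell$ that is a quadratic nonresidue mod $p$. Then $(\ell,-p)_p=\Lsymb{\ell}{p}=-1$, and $\ell$ splits in $K$ provided $\Lsymb{-p}{\ell}=1$. By quadratic reciprocity and $p\equiv1\pmod4$, $\Lsymb{-p}{\ell}=\Lsymb{-1}{\ell}\Lsymb{\ell}{p}$, so it suffices to choose $\ell\equiv3\pmod4$ with $\Lsymb{\ell}{p}=-1$. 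Such a prime exists by Dirichlet's theorem. Finally, $(\ell,-p)_2=-1$ is then automatic by reciprocity.
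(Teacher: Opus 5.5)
Your proof is correct. It uses the same local--global machinery as the paper: the Hasse norm theorem, the valuation description of $(a,-p)_\ell$ at inert primes, and Hilbert reciprocity. The difference is in how you get the nontrivial element of $\Grs$ when $p\equiv 1\pmod 4$.

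The paper writes down the full short exact sequence \eqref{eq:3}. Its surjectivity and exactness in the middle come from Serre's Theorem~III.4 on global elements with prescribed Hilbert symbols. The restricted sequence \eqref{eq:8} then reads off the order of $\Grs$ directly from the number of ramified primes.

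You use only injectivity and reciprocity, which give the upper bound $|\Grs|\le 2$. You then produce the generator by hand: $g=2$ when $p\equiv 5\pmod 8$, and otherwise a Dirichlet prime $\ell\equiv 3\pmod 4$ with $\Lsymb{\ell}{p}=-1$. These are exactly the representatives the paper lists in the example after the proposition.

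Your version is more explicit and slightly more elementary. Your independence argument for $\Pin$ and your proof that $\Gin\cap\Grs=1$ only need that global norms are local norms. The paper's version handles all cases uniformly in one stroke.
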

\begin{ex}
Before proving Proposition~\ref{prop:determinant}, we provide some concrete
examples of positive integers $g$ that represent the nontrivial element of
$\Grs$ when $p\equiv 1\pmod{4}$.  Let $\ell_0\in \bbN$ be a prime
that satisfies the
  following   conditions
 \begin{equation}
     \label{eq:2}
     \ell_0\equiv 3\pmod{4}\qquad \text{and}\qquad
     \Lsymb{\ell_0}{p}=-1. 
   \end{equation}
   A direct computation using
   \cite[Theorem~III.1]{Serre-arithmetic}  shows that $g=\ell_0$ has the desired Hilbert symbols in
  \eqref{eq:9}. By definition,  $2\Nm(K^\times)$ lies in $\Grs$  when
  $p\equiv 1\pmod{4}$, and a similar computation shows that it generates $\Grs$ if and only if  $p\equiv
  5\pmod{8}$.
\end{ex}

\begin{proof}[Proof of Proposition~\ref{prop:determinant}]
(1) From the Hasse
  norm 
  principle and \cite[Theorem~III.4]{Serre-arithmetic}, there is a short exact
  sequence
  \begin{equation}
    \label{eq:3}
    1\longrightarrow \Q_+^\times/\Nm(K^\times)\longrightarrow \bigoplus_{\text{prime
      }\ell}\Q_\ell^\times/\Nm(K_\ell^\times)\xrightarrow{
    \prod \iota_\ell} \{\pm 1\}\longrightarrow 1. 
\end{equation}
Here we omit the infinite place since $(a,
    -p)_\infty=1$ for every  positive rational number $a\in\Q_+^\times$.
   If $\ell$ is a
    prime split in $K$, then $K_\ell\simeq \Q_\ell\oplus\Q_\ell$, and 
    hence $\Nm(K_\ell^\times)=\Q_\ell^\times$. Thus 
    \eqref{eq:3} simplifies into
    \begin{equation}
      \label{eq:6}
    1\longrightarrow \Q_+^\times/\Nm(K^\times)\longrightarrow \bigoplus_{\Lsymb{K}{\ell}\in
      \{-1, 0\}}
      \Q_\ell^\times/\Nm(K_\ell^\times)\xrightarrow{\prod \iota_\ell} \{\pm 1\}\longrightarrow 1.        
    \end{equation}
    Here every $\Q_\ell^\times/\Nm(K_\ell^\times)$ with $\Lsymb{K}{\ell}\in \{-1, 0\}$ can be  canonically
    identified with the group $\{\pm 1\}$ by sending each coset $a\Nm(K_\ell^\times)$ to its
    corresponding     Hilbert symbol $(a, -p)_\ell$. More
    explicitly, 
    if $\ell$ is inert in $K$, then
    $\Q_\ell^\times/\Nm(K_\ell^\times)\simeq \ell^\Z/\ell^{2\Z}$, and
    $(a, -p)_\ell=(-1)^{\nu_\ell(a)}$, where $\nu_\ell:
    \Q_\ell^\times\twoheadrightarrow \Z$ denotes the normalized
    discrete valuation of $\Q_\ell$. Thus if $a\in \Z_{>0}$ is a square-free
positive   integer divisible by some $\ell$ inert in $K$, then $a$ is
not a local norm at $\ell$, and hence 
$a\not\in \Nm(K^\times)$. This proves part (1)  of the
proposition. 
  
(2) Next, we determine the group structure of $\Grs$. By the above
discussion, for each $a\in \Q_+^\times$, we have 
\begin{equation}
  \label{eq:10}
  \begin{split}
  a\Nm(K^\times)\in \Grs\qquad &                                         
                                            \Longleftrightarrow \quad
                                            a\Nm(K_\ell^\times)=\Nm(K_\ell^\times)\quad 
                                            \text{for every $\ell$
                                            inert in $K$}, \\
&                                         
                                            \Longleftrightarrow \quad    \qquad (a,
  -p)_\ell=1\quad \text{for every $\ell$ inert in $K$.}        
  \end{split}
\end{equation}
  Thus we can restrict \eqref{eq:6} to $\Grs$ to obtain a simpler short exact sequence
\begin{equation}
  \label{eq:8}
1\longrightarrow \Grs\longrightarrow \bigoplus_{\Lsymb{K}{\ell}=0}
  \Q_\ell^\times/\Nm(K_\ell^\times)\longrightarrow \{\pm 1\}\longrightarrow 1.   
\end{equation}
If either $p=2$ or $p\equiv 3\pmod{4}$, then the only prime ramified in
$K=\Q(\sqrt{-p})$  is $p$ itself. Hence  $ \Grs$ is trivial  by
the exactness of \eqref{eq:8}, proving part (2) of the proposition.

(3) Lastly, suppose that $p\equiv
1\pmod{4}$. Then there are exactly 2 primes ramified in $K$, namely,
$2$ and $p$. From \eqref{eq:8} again, we see 
that $\Grs$ is a cyclic group of order $2$ whose generator
$g\Nm(K^\times)$ is uniquely determined by the conditions in
\eqref{eq:9}. 
\end{proof}

 We return to the study of $\sqrt{-p}$-modular lattices in the
 hermitian space $(V, \varphi)$.  From the definition, the $\sqrt{-p}$-modularity of an $O_K$-lattice can be characterized  locally.  
 
 \begin{lem}
 An $O_K$-lattice $L$ in $(V, \varphi)$ is
 $\sqrt{-p}$-modular if and only if both of the following conditions hold
 \begin{enumerate}
 \item[{\rm (1)}]  $L_\ell$ is a
   self-dual $O_{K_\ell}$-lattice in $(V_\ell, \varphi_\ell)$ for every prime $\ell\neq p$;
 \item[{\rm (2)}] $L_p$ is a $\sqrt{-p}$-modular    $O_{K_p}$-lattice in
   $(V_p, \varphi_p)$. 
 \end{enumerate}    
 \end{lem}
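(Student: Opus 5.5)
The plan is to use the standard local–global principle for lattices: two $O_K$-lattices in $V$ coincide exactly when their completions $L_\ell$ coincide at every prime $\ell$. First I will check that taking the $O_K$-valued dual commutes with completion, i.e.
\[
(L^{\star})_\ell = (L_\ell)^{\star}\coloneqq\{x\in V_\ell\mid \varphi_\ell(x,L_\ell)\subseteq O_{K_\ell}\}
\]
for every prime $\ell$, where $O_{K_\ell}=O_K\otimes_\Z\Z_\ell$.

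To prove this identity, pick an $O_K$-lattice $L$. Since $\varphi$ is nondegenerate, $L^\star$ is also an $O_K$-lattice; one can see this because $L^\star$ is the image of $\Hom_{O_K}(L,O_K)$ under the identification $V\simeq \Hom_K(V,K)$ given by $x\mapsto\varphi(-,x)$, up to the twist by $\sigma$. Completion commutes with $\Hom$ for finitely generated modules over the Noetherian ring $O_K$, because $\Z_\ell$ is flat over $\Z$. Hence
\[
\Hom_{O_K}(L,O_K)\otimes\Z_\ell=\Hom_{O_{K_\ell}}(L_\ell,O_{K_\ell}),
\]
which gives $(L^\star)_\ell=(L_\ell)^\star$. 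Alternatively, take an $O_K$-basis of a free sublattice of finite index and argue with Gram matrices. Either way the step is routine.

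Next I will compare $L$ with $\sqrt{-p}L^\star$ one prime at a time:
\[
L=\sqrt{-p}L^\star \iff L_\ell=\sqrt{-p}(L_\ell)^\star \text{ for all primes } \ell.
\]
Note that multiplication by $\sqrt{-p}$ commutes with completion, so the right-hand side is the completion of $\sqrt{-p}L^\star$.

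For $\ell\neq p$, the element $\sqrt{-p}$ has norm $p$, which is a unit in $\Z_\ell$. So $\sqrt{-p}\in O_{K_\ell}^\times$, and therefore $\sqrt{-p}(L_\ell)^\star=(L_\ell)^\star$. The local condition at such $\ell$ is then exactly $L_\ell=(L_\ell)^\star$, which is self-duality and gives condition (1). At $\ell=p$ the local condition is literally the $\sqrt{-p}$-modularity of $L_p$, giving condition (2).

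The only step with any substance is the compatibility of the dual with completion. I expect it to be harmless, since $O_K$ is Dedekind and $L$ is projective. I will also note that when $\ell$ splits, $O_{K_\ell}$ is not a field but a product $\Z_\ell\times\Z_\ell$. This causes no problem, because the Hom argument above does not use that $O_{K_\ell}$ is a domain.
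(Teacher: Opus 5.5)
Your proposal is correct and matches the paper's approach. The paper gives no written proof beyond the remark that $\sqrt{-p}$-modularity ``can be characterized locally'' from the definition. Your three steps (the $O_K$-valued dual commutes with completion, the local--global principle for lattices, and $\sqrt{-p}\in O_{K_\ell}^\times$ for $\ell\neq p$) are exactly the routine details behind that remark.
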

 At each prime $\ell$ (including $\ell=p$), the existence of local
 $O_{K_\ell}$-lattices as above puts certain constraints on the local
 determinant $d\varphi_\ell$ that we shall describe below. 
 Recall from
\cite[Theorem~3.1]{Jacobowitz-HermForm} that a nondegenerate
hermitian space $(W_\ell, \psi_\ell)$ over the quadratic semisimple
$\Q_\ell$-algebra $K_\ell$ is uniquely
determined up to isometry by its rank and determinant.

 \begin{lem}\label{lem:self-dual-OKl-lattice}
   Let $(W_\ell, \psi_\ell)$ be a nondegenerate hermitian space
   over $K_\ell$.
   \begin{enumerate}[{\rm (1)}]
   \item If $\Lsymb{K}{\ell}\in \{0, 1\}$, then there is always a
     self-dual $O_{K_\ell}$-lattice in $(W_\ell, \psi_\ell)$;
    \item[{\rm (2)}]  if $\Lsymb{K}{\ell}=-1$, then there is  a
     self-dual $O_{K_\ell}$-lattice in $(W_\ell, \psi_\ell)$ if and
     only if $d\psi_\ell=1$ in
$\Q_\ell^\times/\Nm(K_\ell^\times)$.  
   \end{enumerate}
Moreover, if $\ell$ is unramified in $K$, then there is at most one isometric class of
self-dual $O_{K_\ell}$-lattices in $(W_\ell, \psi_\ell)$.  
 \end{lem}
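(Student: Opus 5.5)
The plan is to treat the three cases of $\Lsymb{K}{\ell}$ separately, relying throughout on Jacobowitz's result \cite[Theorem~3.1]{Jacobowitz-HermForm} that a nondegenerate hermitian space over $K_\ell$ is determined up to isometry by its rank and determinant. In each case I will write down explicit self-dual lattices whose determinants cover all the classes in $\Q_\ell^\times/\Nm(K_\ell^\times)$ that are allowed. I will also show that the Gram matrix of any self-dual lattice has unit determinant, which is where the constraint comes from. Set $m=\dim W_\ell$.

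\emph{Split case.} Suppose $\Lsymb{K}{\ell}=1$. Then $K_\ell\simeq\Q_\ell\times\Q_\ell$ with $\sigma$ swapping the factors, and $\Nm(K_\ell^\times)=\Q_\ell^\times$. So the determinant imposes no condition and $W_\ell$ is unique of its rank. The lattice $O_{K_\ell}^m$ with the form $\sum x_i y_i^\sigma$ is self-dual, which gives existence. For uniqueness I will use the standard equivalence: a hermitian space over $\Q_\ell\times\Q_\ell$ corresponds to a pair $(U,U^*)$ of a $\Q_\ell$-space and its dual. Under this correspondence a self-dual lattice becomes a $\Z_\ell$-lattice $N\subset U$ together with its dual $N^*\subset U^*$. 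Any two such pairs are conjugate under $\GL(U)$, so all self-dual lattices are isometric.

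\emph{Inert case.} Suppose $\Lsymb{K}{\ell}=-1$, so $K_\ell$ is an unramified field. If $L$ is self-dual, the determinant of its Gram matrix lies in $\Z_\ell^\times$. Every unit of $\Z_\ell$ is a norm from an unramified extension, so $d\psi_\ell=1$. Conversely, if $d\psi_\ell=1$, then $\langle 1,\dots,1\rangle$ has the same rank and determinant, hence realizes $W_\ell$ and is self-dual. For uniqueness I will invoke Jacobowitz's classification of modular lattices in the unramified case: a unimodular lattice there always has an orthogonal basis, since one can find $x$ with $\psi(x,x)$ a unit because $\Tr\colon O_{K_\ell}\to\Z_\ell$ is surjective. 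Rescaling, every unimodular lattice becomes $\langle 1,\dots,1\rangle$, which forces a single isometry class.

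\emph{Ramified case.} Suppose $\Lsymb{K}{\ell}=0$. Only existence is required here, and this is the step I expect to be the main obstacle. The determinant group $\Q_\ell^\times/\Nm(K_\ell^\times)$ has order $2$, so I must exhibit self-dual lattices of both determinant classes in every rank $m$. The first ingredient is the rank-one lattices $\langle u\rangle$ for units $u\in\Z_\ell^\times$. For $\ell$ odd, $u$ runs over both classes modulo norms, since the unit norms form the index-$2$ subgroup $\Z_\ell^{\times2}$. That handles every rank by taking $\langle 1,\dots,1,u\rangle$.

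For $\ell=2$ with $p\equiv1\pmod 4$, or with $p=2$, I need to check carefully which units are norms. If some non-norm class fails to be represented by a unit, I will fall back on the hyperbolic plane $H=\begin{pmatrix}0&\pi\\ \pi^\sigma&0\end{pmatrix}$ with $\pi^{-1}$ scaled appropriately, or on $\begin{pmatrix}0&1\\1&0\end{pmatrix}$, which has determinant $-1$. Sums of these planes with $\langle u\rangle$ should cover both classes. Concretely, $\Nm(K_2^\times)$ has index $2$ in $\Q_2^\times$, while the unit norms have index $2$ in $\Z_2^\times$ for a ramified extension, since the local reciprocity map sends units onto inertia, which is nontrivial. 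So units alone already represent both classes, and the diagonal construction works for $\ell=2$ as well.
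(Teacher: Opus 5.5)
Your proposal is correct and follows essentially the same route as the paper. You represent $d\psi_\ell$ by a unit $u$ (automatically in the split case, and in the ramified case via $\Z_\ell^\times/\Nm(O_{K_\ell}^\times)\simeq\Q_\ell^\times/\Nm(K_\ell^\times)$) and take $\diag(1,\dots,1,u)$; you get necessity in the inert case from the unit Gram determinant, and uniqueness from Jacobowitz's orthonormal-basis result. Your dual-pair argument supplies the split-case uniqueness that the paper leaves as an exercise, and the hyperbolic-plane fallback you mention is unnecessary because units already represent both classes.
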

 \begin{proof}
First, suppose that either $\Lsymb{K}{\ell}\in \{0, 1\}$ or
$d\psi_\ell=1$ in $\Q_\ell^\times/\Nm(K_\ell^\times)$. We claim that in each of these cases $d\psi_\ell$ is
represented by some unit $u\in \Z_\ell^\times$. If
$\Lsymb{K}{\ell}=1$, this follows directly from the triviality of
$\Q_\ell^\times/\Nm(K_\ell^\times)$. If $\Lsymb{K}{\ell}=0$, then the
canonical inclusion
$\Z_\ell^\times/\Nm(O_{K_\ell}^\times)\to
\Q_\ell^\times/\Nm(K_\ell^\times)$ is an isomorphism, so $d\psi_\ell$
is indeed represented by a unit.  This verifies the claim and allows us
to identify $(W_\ell,\psi_\ell)$ with the hermitian space
$(K_\ell^{\rk(W_\ell)}, \langle~,~\rangle)$ whose Gram matrix with
respect to the standard basis is given by the diagonal matrix
$\diag(1, \cdots, 1, u)$.  Then the
standard $O_{K_\ell}$-lattice $O_{K_\ell}^{\rk(W_\ell)}$ is clearly
self-dual.   From \cite[\S7]{Jacobowitz-HermForm}, if $\ell$ is inert in $K$,  then every self-dual $O_{K_\ell}$-lattice admits an orthonormal basis, so such lattices in $(W_\ell, \psi_\ell)$ form a single isometric class. We leave it as an easy exercise to show that there is a unique isometric class of self-dual $O_{K_\ell}$-lattices in $(W_\ell, \psi_\ell)$ in the split case  as well.

 
Conversely, suppose that there exists  a
  self-dual $O_{K_\ell}$-lattice  $L_\ell$ in $(W_\ell, \psi_\ell)$.  Then the Gram matrix
  $[\psi_\ell(e_i, e_j)]_{1\leq i, j\leq n}$ belongs to
  $\GL_n(O_{K_\ell})$ for any $O_{K_\ell}$-basis
  $\{e_1, \cdots, e_n\}$ of $L_\ell$.  In particular, 
  the  determinant $d\psi_\ell\in \Q_\ell^\times/\Nm(K_\ell^\times)$ is represented by a unit
  $\det[\psi_\ell(e_i, e_j)]\in \Z_\ell^\times$. If $\ell$ is further assumed to
  be inert in $K$, then
 $(d\psi_\ell, -p)_\ell=(-1)^{\nu_\ell(d\psi_\ell)}=1$, or equivalently, $d\psi_\ell=1$ in
$\Q_\ell^\times/\Nm(K_\ell^\times)$. This proves the necessity part of the second statement of the lemma. 
\end{proof}


Note that the extension $K/\Q$ admits a ramified prime $\ell\neq p$  if and only if $p\equiv 1\pmod{4}$, in which case $\ell=2$. The classification of self-dual lattices in the ramified dyadic case is more involved and will be postponed to Lemma~\ref{lem:RU-self-dual-genus}.  
Combining Lemma~\ref{lem:self-dual-OKl-lattice} and \eqref{eq:10}, we
immediately obtain the following corollary.  
\begin{cor}\label{cor:det-away-from-p}
If there exists a $\sqrt{-p}$-modular $O_K$-lattices in 
 $(V, \varphi)$, then $d\varphi\in \Grs$. 
\end{cor}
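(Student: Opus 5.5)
By the lemma preceding Lemma~\ref{lem:self-dual-OKl-lattice}, if $L$ is a $\sqrt{-p}$-modular $O_K$-lattice in $(V,\varphi)$, then for every prime $\ell\neq p$ the completion $L_\ell$ is a self-dual $O_{K_\ell}$-lattice in $(V_\ell,\varphi_\ell)$. We restrict attention to the primes $\ell\in\Pin$. These are inert in $K$, so they are in particular distinct from $p$, which ramifies. For such $\ell$, Lemma~\ref{lem:self-dual-OKl-lattice}(2) applies to $(V_\ell,\varphi_\ell)$. It shows that the existence of the self-dual lattice $L_\ell$ forces $d\varphi_\ell=1$ in $\Q_\ell^\times/\Nm(K_\ell^\times)$. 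Equivalently, $(d\varphi,-p)_\ell=1$, since the local determinant $d\varphi_\ell$ is just the image of the global class $d\varphi$ under the localization map.

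To conclude we invoke the characterization \eqref{eq:10} from the proof of Proposition~\ref{prop:determinant}. It says that a class $a\Nm(K^\times)\in\Q_+^\times/\Nm(K^\times)$ lies in $\Grs$ if and only if $(a,-p)_\ell=1$ for every $\ell$ inert in $K$. Every rational prime with $\Lsymb{K}{\ell}=-1$ lies in $\Pin$, so the condition obtained in the first step holds at all inert primes. Applying \eqref{eq:10} with $a$ a positive rational representative of $d\varphi$ (available by positive-definiteness) then gives $d\varphi\in\Grs$.

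An equivalent formulation uses Proposition~\ref{prop:determinant}(1). Write $d\varphi=\gamma_{\rm in}\gamma_{\rm rs}$ with $\gamma_{\rm in}\in\Gin$ and $\gamma_{\rm rs}\in\Grs$. Each $\ell\in\Pin$ has nontrivial Hilbert symbol only at its own inert place among the inert primes, so the local triviality at all inert primes forces $\gamma_{\rm in}$ to be trivial in the $\F_2$-basis $\Pin$. There is essentially no obstacle in this argument. The only point needing care is the compatibility between the global determinant and its local images, and this is immediate because a $K$-basis of $V$ is also a $K_\ell$-basis of $V_\ell$, so the same Gram matrix computes both.
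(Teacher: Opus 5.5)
Your proposal is correct and follows the paper's argument: the paper obtains the corollary by combining Lemma~\ref{lem:self-dual-OKl-lattice}(2), which forces $d\varphi_\ell=1$ at every inert prime $\ell$ (automatically $\ell\neq p$), with the characterization \eqref{eq:10} of $\Grs$. Your third paragraph, which decomposes $d\varphi$ along the basis $\Pin$, is a valid restatement of the same reasoning.
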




Now we fix a prime $\ell$  ramified in $K/\Q$ (e.g.~$\ell=p$) and consider $\sqrt{-p}$-modular hermitian $O_{K_\ell}$-lattices at the prime $\ell$. 
For simplicity, a hermitian $O_{K_\ell}$-lattice $(M_\ell, \langle~,~\rangle_\ell)$ is often abbreviated as $M_\ell$ since the hermitian form $\langle~,~\rangle_\ell: M_\ell\times M_\ell\to O_{K_\ell}$ will be implicitly present. 

\begin{defn}

The determinant $dM_\ell$ of $M_\ell$ is defined to be the element $\Q_\ell^\times/\Nm(O_{K_\ell}^\times)$ represented by the determinant of the
Gram matrix $[\langle e_i, e_j\rangle_\ell]_{i,j}$ for any $O_{K_\ell}$-basis $\{e_i\}_{1\leq i \leq \rk M_\ell}$ of $M_\ell$. 
The \emph{scale}  of $M_\ell$ is defined as the fractional $O_{K_\ell}$-ideal $\grs(M_\ell)\coloneqq \{\langle x, y\rangle_\ell\mid x, y\in M_\ell\}$, and the \emph{norm} $\grn(M_\ell)$ as the fractional $O_{K_\ell}$-ideal generated by the set $\{\langle x, x\rangle_\ell\mid x\in M_\ell\}$. 
Clearly, $\grn(M_\ell)\subseteq \grs(M_\ell)$, and we call $M_\ell$ \emph{normal} if the equality $\grn(M_\ell)=\grs(M_\ell)$ holds, and  \emph{subnormal} otherwise. 
\end{defn}

Hermitian lattices of rank $1$ and $2$ will be called \emph{lines} and \emph{planes}, respectively. 
They play crucial roles in the classification theory since every hermitian lattice can be written as an orthogonal sum of modular lines and planes by \cite[Proposition~4.3]{Jacobowitz-HermForm}. By definition, a line is normal.  Following Jacobowitz \cite[\S4]{Jacobowitz-HermForm},  if 
$M_\ell$ is a hermitian $O_{K_\ell}$-lattice with an orthogonal basis $e_1, \cdots, e_n$ for which $\langle e_i, e_i\rangle_\ell=a_i\in \Q_\ell$, then we write 
\[M_\ell\simeq (a_1)\perp \cdots \perp (a_n). \] 
Similarly, if $M_\ell=O_{K_\ell}e_1\oplus O_{K_\ell}e_2$ is a plane for which $\langle e_1, e_1\rangle_\ell=a, \langle e_1, e_2\rangle_\ell=\alpha$, and $\langle e_2, e_2\rangle_\ell=b$, then we write $M_\ell\simeq \begin{bmatrix}
    a & \alpha \\ \alpha^\sigma & b
\end{bmatrix}$.
For example, we define 
the
\emph{$\sqrt{-p}$-modular hyperbolic $O_{K_p}$-plane} $H_p(1)$ as  the hermitian $O_{K_p}$-plane  
\[ H_p(1)\simeq \begin{bmatrix}
  0 & \sqrt{-p} \\ -\sqrt{-p} & 0
\end{bmatrix}. \]  As the name suggests, $H_p(1)$ can easily be shown to be $\sqrt{-p}$-modular.  When $p\equiv 1\pmod{4}$ and $\ell=2$,  we also define the \emph{self-dual hyperbolic $O_{K_2}$-plane} $H_2(0)$ as the 
hermitian $O_{K_2}$-plane  $H_2(0)\simeq \begin{bmatrix}
  0 & 1 \\ 1 & 0
\end{bmatrix}$.

\begin{lem}\label{lem:even-rk}
Every $\sqrt{-p}$-modular hermitian $O_{K_p}$-lattice has even rank. 
\end{lem}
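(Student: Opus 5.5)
Write $\pi\coloneqq\sqrt{-p}$. The plan is to reduce to a statement about a nondegenerate alternating form over the residue field $\F_p$; for $p=2$ we use a different reduction, described below. Let $M$ be a $\sqrt{-p}$-modular hermitian $O_{K_p}$-lattice of rank $n$, so that $M^\star=\pi^{-1}M$. Then $\langle M,M\rangle\subseteq \pi O_{K_p}$, and we may define a new form
\[ b(x,y)\coloneqq \pi^{-1}\langle x,y\rangle \in O_{K_p}. \]
Modularity says exactly that $b$ is a perfect pairing on $M$. The form $b$ is sesquilinear, and since $\pi^\sigma=-\pi$ it is skew-hermitian: $b(y,x)=-b(x,y)^\sigma$.

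Now reduce modulo $\pi$. The residue field is $O_{K_p}/\pi O_{K_p}=\F_p$, because $p$ is totally ramified, and $\sigma$ acts trivially on it. Hence $\bar M\coloneqq M/\pi M$ is an $n$-dimensional $\F_p$-space carrying a nondegenerate bilinear form $\bar b$ satisfying $\bar b(y,x)=-\bar b(x,y)$. Nondegeneracy holds because $b$ is perfect, so its Gram determinant is a unit.

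For odd $p$ the argument finishes quickly. Here $\bar b(x,x)=-\bar b(x,x)$ forces $2\bar b(x,x)=0$, so $\bar b$ is alternating. A nondegenerate alternating form exists only in even dimension, so $n$ is even. Equivalently, one can take determinants: $\det\bar b=(-1)^n\det\bar b\neq0$.

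The case $p=2$ is the main obstacle, since skew-symmetry gives nothing in characteristic $2$. Here $\pi=\sqrt{-2}$ and $\langle x,x\rangle\in\Q_2\cap\pi O_{K_2}=2\Z_2$. We have $b(x,x)=\pi^{-1}\langle x,x\rangle$ with $\langle x,x\rangle\in 2\Z_2=\pi^2 O_{K_2}\cap\Q_2$, so $b(x,x)\in\pi O_{K_2}$. Thus $\bar b(x,x)=0$ and $\bar b$ is again alternating, which again forces $n$ even.

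This argument is in fact uniform: for any $p$, $\langle x,x\rangle\in\Q_p\cap\pi O_{K_p}=p\Z_p$ because $\nu_\pi$ of a rational number is even. So $\bar b(x,x)=0$ always, and the only fact needed is that a nondegenerate alternating form over a field lives in even dimension. As an alternative, one could invoke Jacobowitz's decomposition \cite[Proposition~4.3]{Jacobowitz-HermForm} into modular lines and planes and note that a $\pi$-modular line $(a)$ would require $a\in\Q_p$ with $\nu_\pi(a)=1$, which is impossible. I would present the residue-form argument as the main proof and mention the line argument as a remark.
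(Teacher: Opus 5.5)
Your proof is correct, but it takes a different route from the paper's. The paper works inside Jacobowitz's norm/scale framework. Modularity gives $\mathfrak{s}(M_p)=\sqrt{-p}\,O_{K_p}$, while $\mathfrak{n}(M_p)$ is generated by elements of $\Q_p$ and is therefore an even power of the prime, so $M_p$ is subnormal. If the rank were odd, \cite[Proposition~4.4]{Jacobowitz-HermForm} would split $M_p$ into $\sqrt{-p}$-modular lines, which are normal, a contradiction. Your proof rests on the same key fact: $\langle x,x\rangle\in\Q_p\cap\pi O_{K_p}\subseteq\pi^2O_{K_p}$ is exactly the subnormality $\mathfrak{n}(M)\subseteq\pi\,\mathfrak{s}(M)$. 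Where you differ is that you replace the appeal to Jacobowitz's decomposition by elementary linear algebra. Rescaling by $\pi^{-1}$ gives a unimodular skew-hermitian form, its reduction modulo $\pi$ is a nondegenerate alternating $\F_p$-form, and that forces even dimension. This makes your argument self-contained and uniform in $p$. In particular you handle the dyadic case $p=2$ correctly, by using the alternating property rather than skew-symmetry, which gives nothing in characteristic $2$. The paper's version, in turn, stays in the norm/scale language that it reuses in the following local classification lemmas. Your closing remark on modular lines is essentially the paper's proof. Finally, an even shorter uniform form of your idea: on a basis of $M$, the Gram matrix of $\langle~,~\rangle$ is $G=\pi B$ with $B\in\GL_n(O_{K_p})$, while $\det G\in\Q_p^\times$ because $G$ is hermitian. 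Hence $n=\nu_\pi(\det G)=2\nu_p(\det G)$ is even.
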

\begin{proof}
Let $M_p$ be a $\sqrt{-p}$-modular hermitian $O_{K_p}$-lattice. Since $M_p=\sqrt{-p}M_p^\star$, we have $\grs(M_p)=\sqrt{-p}O_{K_p}$, which coincides with the unique prime ideal of $O_{K_p}$. On the other hand,  the norm $\grn(M_p)$ is an even power of $\sqrt{-p}O_{K_p}$ because $K_p/\Q_p$ is ramified. Thus every $\sqrt{-p}$-modular $O_{K_p}$-lattice is subnormal. 
If $\rk M_p$ is odd, then $M_p$ is an orthogonal direct sum of $\sqrt{-p}$-modular lines by \cite[Proposition~4.4]{Jacobowitz-HermForm}, which is absurd since every line is normal. 
\end{proof}

Henceforth we focus exclusively on even dimensional nondegenerate hermitian spaces. 
When $p$ is odd, $K_p/\Q_p$ is a nondyadic ramified extension. As shown by Jacobowitz \cite{Jacobowitz-HermForm},  the classification of $\sqrt{-p}$-modular $O_{K_p}$-lattices in this case is considerably simpler than the dyadic case.


\begin{lem}\label{lem:det-at-p}
Let $p$ be an odd prime, and $(W_p, \psi_p)$ be an even dimensional nondegenerate hermitian $K_p$-space.  Then there exists a $\sqrt{-p}$-modular $O_{K_p}$-lattice in $(W_p, \psi_p)$ if and only if $d\psi_p=(-1)^{(\dim W_p)/2}$  in $\Q_p^\times/\Nm(K_p^\times)$, in which case there is a unique isometric class of 
such lattices in $(W_p, \psi_p)$.  Moreover, every $\sqrt{-p}$-modular $O_{K_p}$-lattice has norm $pO_{K_p}$.
\end{lem}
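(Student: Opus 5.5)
The plan is to reduce everything to Jacobowitz's structure theory for modular lattices over the ramified nondyadic extension $K_p/\Q_p$ and then identify the result with an orthogonal sum of copies of $H_p(1)$. Let $M_p$ be a $\sqrt{-p}$-modular $O_{K_p}$-lattice of even rank $n=2m$. By the proof of Lemma~\ref{lem:even-rk}, $M_p$ is subnormal with $\grs(M_p)=\sqrt{-p}O_{K_p}$. Since $\grn(M_p)$ is an even power of $\pi:=\sqrt{-p}$ contained in $\grs(M_p)$, we get $\grn(M_p)\subseteq pO_{K_p}$. Conversely, $M_p$ cannot have norm in $p^2O_{K_p}$: in the nondyadic ramified case, Jacobowitz \cite[Proposition~4.4]{Jacobowitz-HermForm} shows that a subnormal $\pi^{i}$-modular lattice with $i$ odd is an orthogonal sum of planes $\begin{bmatrix} 0 & \pi^i\\ (-1)^i\pi^i & 0\end{bmatrix}$, i.e. copies of $H_p(1)$. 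This is the key input, and it already yields both uniqueness and the norm assertion. Indeed, $H_p(1)$ contains $e_1+e_2$ with $\langle e_1+e_2,e_1+e_2\rangle=\sqrt{-p}-\sqrt{-p}=0$, but also $e_1+\pi e_2$ with value $\pi\bar\pi+\bar\pi\pi\cdot(\text{unit})$. More simply, $\langle x e_1+ye_2,xe_1+ye_2\rangle=\Tr(x\bar y\sqrt{-p})$, and taking $x=1,y=\sqrt{-p}/p\cdot$(unit) is not integral, so instead take $x=1$, $y=u\in\Z_p^\times$ to get $\Tr(u\sqrt{-p})=0$. Taking $y=u\sqrt{-p}$ gives $\Tr(-up)\cdot$... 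I will simply compute the ideal generated by $\Tr(O_{K_p}\sqrt{-p})=\Tr(\sqrt{-p}\Z_p+p\Z_p)=2p\Z_p=p\Z_p$ since $p$ is odd. Hence $\grn(H_p(1))=pO_{K_p}$, and therefore $\grn(M_p)=pO_{K_p}$.

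For the existence criterion, I first compute the determinant: $\det H_p(1)=-(\sqrt{-p})(-\sqrt{-p})=-p\cdot(-1)\cdot(-1)$. Carefully, $\det\begin{bmatrix}0&\pi\\-\pi&0\end{bmatrix}=\pi^2=-p$. Since $p=\Nm(\sqrt{-p})$, we have $-p\equiv -1$ in $\Q_p^\times/\Nm(K_p^\times)$, so $d(H_p(1)^{\perp m})=(-1)^m=(-1)^{(\dim W_p)/2}$. Thus the existence of $M_p$ forces $d\psi_p=(-1)^{m}$, because the determinant of a lattice represents the determinant of the space. Conversely, if $d\psi_p=(-1)^m$, then $H_p(1)^{\perp m}\otimes\Q_p$ and $(W_p,\psi_p)$ have the same rank and determinant, so they are isometric by \cite[Theorem~3.1]{Jacobowitz-HermForm}, and the image of $H_p(1)^{\perp m}$ gives the required lattice. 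Uniqueness of the isometry class follows because every such lattice is $\simeq H_p(1)^{\perp m}$.

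The main obstacle is justifying the decomposition into hyperbolic planes, namely that a subnormal $\pi$-modular plane over a nondyadic ramified $K_p$ is necessarily $H_p(1)$. If I cannot cite this directly, I would argue as follows. First, use \cite[Proposition~4.3]{Jacobowitz-HermForm} to split $M_p$ into modular lines and planes. Lines are excluded because a $\pi$-modular line would need $\langle e,e\rangle\in\Q_p$ with valuation $1/2$, which is impossible. For a $\pi$-modular plane, the norm is contained in $p\Z_p$ and the Gram matrix has off-diagonal entry $\pi\cdot$(unit). I would then use the solvability of $\Tr(z)=-a$ for $z\in pO_{K_p}$, where $a\in p\Z_p$, which is valid since $2$ is a unit, to modify $e_1\mapsto e_1+ce_2$ and kill the diagonal entries one at a time. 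Finally, I would rescale $e_2$ to make the off-diagonal entry exactly $\pi$.
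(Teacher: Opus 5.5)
Your proposal is correct and follows essentially the same route as the paper: Jacobowitz's structure theorem gives $M_p\simeq H_p(1)^{\oplus m}$. From this one reads off the determinant $(-p)^m=(-1)^m$, the uniqueness of the isometry class, and $\grn(M_p)=\grn(H_p(1))=pO_{K_p}$, and existence follows from the classification of hermitian $K_p$-spaces by rank and determinant. The differences are minor: the paper cites Jacobowitz's Proposition~8.1 (the nondyadic ramified case) directly for the hyperbolic decomposition, and in your fallback argument the substitution $e_1\mapsto e_1+ce_2$ gives $a+\Tr(\bar c\alpha)+\Nm(c)b$, so killing the first diagonal entry needs a Hensel or successive-approximation step rather than just solving a trace equation.
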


\begin{proof}
 From \cite[Proposition~8.1]{Jacobowitz-HermForm}, every
$\sqrt{-p}$-modular hermitian $O_{K_p}$-lattice  
is  isometric to $H_p(1)^{\oplus
  m}$ for some $m>0$.  Thus there exists  a $\sqrt{-p}$-modular $O_{K_p}$-lattice in $(W_p, \psi_p)$ if and only if $(W_p, \psi_p)\simeq H_p(1)^{\oplus (\dim W_p)/2}\otimes \Q_p$, or equivalently, 
\[ d\psi_p=\begin{vmatrix}
  0 & \sqrt{-p} \\ -\sqrt{-p} & 0
\end{vmatrix}^{(\dim W_p)/2}=(-p)^{(\dim W_p)/2}=(-1)^{(\dim W_p)/2}
  \in \Q_p^\times/\Nm(K_p^\times). \]
Clearly,  all such  lattices in $(W_p, \psi_p)$ form a single isometric class if they exist.   We compute directly that 
$\grn(H_p(1)^m)=\grn(H_p(1))=pO_{K_p}$. 
\end{proof}

We move on to the ramified dyadic case, which is further divided into two sub-cases.  If $p=2$, the dyadic extension $K_2/\Q_2$ belongs to the \emph{ramified prime} (``R-P'') type as named by Jacobowitz \cite{Jacobowitz-HermForm}, as $\Q_2(\sqrt{-2})/\Q_2$ is generated by  the square  root of a prime element of $\Q_2$. If $p\equiv 1\pmod{4}$,
 $\Q_2(\sqrt{-p})/\Q_2$ belongs to the \emph{ramified unit} (``R-U'') type, as  $-p$ is a unit in $\Z_2$. In both cases, we have $(-1, -p)_2=-1$, and hence the inclusion $\{\pm 1\}\subseteq\Z_2^\times$ induces the following canonical isomorphisms 
\begin{equation}\label{eqn:dyadic-normgp}
   \{\pm 1\}\simeq \Z_2^\times/\Nm(O_{K_2}^\times)\simeq \Q_2^\times/\Nm(K_2^\times), \quad\text{and}\quad \Q_2^\times/\Nm(O_{K_2}^\times)\simeq \{\pm 2^\Z\}.
\end{equation}

\begin{lem}\label{lem:RP-case}
   Let $p=2$, and $(W_2, \psi_2)$ be an even dimensional nondegenerate hermitian $K_2$-space. Then there always exists a $\sqrt{-2}$-modular $O_{K_2}$-lattice in $(W_2, \psi_2)$, and every such lattice  is isometric to  $M_2\perp H_2(1)^{(\dim W_2)/2-1}$ for some $\sqrt{-2}$-modular plane $M_2$.  More explicitly, we have
   \begin{itemize}
       \item $M_2\simeq \begin{bmatrix}
           -2 & \sqrt{-2} \\ -\sqrt{-2} & 4
       \end{bmatrix}$ if $d\psi_2=(-1)^{(\dim W_2)/2-1} \in \Q_2^\times/\Nm(K_2^\times) $; and 
       \item either $M_2= H_2(1)$ or $M_2\simeq \begin{bmatrix}
           -2 & \sqrt{-2} \\ -\sqrt{-2} & 0
       \end{bmatrix}$ if $d\psi_2=(-1)^{(\dim W_2)/2} \in \Q_2^\times/\Nm(K_2^\times)$.
   \end{itemize}
   In particular, if  $d\psi_2=(-1)^{(\dim W_2)/2-1}$, then there is a unique isometric class of $\sqrt{-2}$-modular $O_{K_2}$-lattice in $(W_2, \psi_2)$. If $d\psi_2=(-1)^{(\dim W_2)/2}$, then there are two isometric classes of such lattices distinguished according to whether the norm is equal to $4O_{K_2}$ or $2O_{K_2}$.
\end{lem}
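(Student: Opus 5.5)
Since $\sqrt{-2}O_{K_2}$ is the unique prime of $O_{K_2}$, I would argue exactly as in Lemma~\ref{lem:even-rk}. A $\sqrt{-2}$-modular lattice $M$ has scale $\sqrt{-2}O_{K_2}$. Its norm is generated by elements of $\Q_2$, so it is an even power of the prime, namely $2O_{K_2}$ or $4O_{K_2}$ (it is contained in $\grs(M)\cap\Q_2$ after accounting for traces). In particular $M$ is subnormal.

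By Jacobowitz's structure theory for the R-P case \cite[\S\S4, 9]{Jacobowitz-HermForm}, $M$ splits as an orthogonal sum of $\sqrt{-2}$-modular planes. Every such plane can be put in the form $\begin{bmatrix} a & \sqrt{-2}\\ -\sqrt{-2} & b\end{bmatrix}$ with $a,b\in 2\Z_2$.

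I would then normalize these planes in three steps.
\begin{enumerate}
\item Scaling basis vectors by units, and replacing $e_2$ by $e_2+ce_1$, lets me reduce $a,b$ modulo $4$, and then modulo finer norm groups. This shows each plane is isometric to one of $H_2(1)$, $\begin{bmatrix}-2&\sqrt{-2}\\-\sqrt{-2}&0\end{bmatrix}$, $\begin{bmatrix}-2&\sqrt{-2}\\-\sqrt{-2}&4\end{bmatrix}$ (or, at first, a small finite list of further candidates that collapse to these).
\item I would establish Witt-type cancellation rules for sums of two planes: a non-hyperbolic plane $P$ satisfies $P\perp P'\simeq P''\perp H_2(1)$, where $P''$ has determinant $\det P\cdot\det P'$ and the norm is unchanged. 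This is done by exhibiting explicit bases.
\item Iterating step 2 collapses all but one summand into $H_2(1)$'s, which gives the form $M_2\perp H_2(1)^{(\dim W_2)/2-1}$.
\end{enumerate}

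Next I would use determinants to read off $M_2$. We have $\det H_2(1)=-2\equiv -1$ in $\Q_2^\times/\Nm(K_2^\times)$, since $2=\Nm(\sqrt{-2})$. The determinants of the three planes are $-2$, $-2$ and $-10$. Since $5$ is not a norm from $\Q_2(\sqrt{-2})$, the last class is $-(-1)$. Matching with $d\psi_2$ yields the two bullet cases. In particular a lattice exists for either determinant class, and by Jacobowitz every hermitian space is determined by rank and determinant, so existence holds in all cases.

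For the isometry classification I would use the norm invariant. $H_2(1)$ has norm $4O_{K_2}$ (diagonal entries $0$, and $\Tr(\sqrt{-2}\,\Z_2[\sqrt{-2}])\subseteq 4\Z_2$), while the other two planes have norm $2O_{K_2}$. Hence, when $d\psi_2=(-1)^{(\dim W_2)/2}$, the two candidates are non-isometric, distinguished by norm. When $d\psi_2=(-1)^{(\dim W_2)/2-1}$, every lattice is isometric to the single listed one.

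The main obstacle is step 2, the explicit reduction and cancellation among planes. I would first try to invoke Jacobowitz's Proposition~9.1 (R-P modular lattices are determined by rank, norm and determinant class), since it would make the whole argument a bookkeeping check. Only failing that would I construct the change-of-basis matrices by hand.
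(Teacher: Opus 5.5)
Your outline is correct and follows the same route as the paper: reduce to $M_2\perp H_2(1)^{(\dim W_2)/2-1}$ with $\grn(M_2)=\grn(L_2)$, classify the $\sqrt{-2}$-modular planes by norm (norm $4O_{K_2}$ forces $H_2(1)$, otherwise the norm is $2O_{K_2}$) and by determinant $\pm 2$, match determinants against $d\psi_2$, and use the norm to separate the two classes. The paper simply cites Jacobowitz's Propositions 10.3, 10.4, 9.1--9.2 and 10.2(b) for what your Steps 1--3 would compute by hand. Two small corrections: the ``determined by rank, norm and determinant'' statement you want follows from 10.3 together with 10.4, not from 9.1, which only gives the norm bounds. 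Also, in Step 2 the plane $P''$ must have determinant $\det P\cdot\det P'/\det H_2(1)=-\tfrac12\det P\det P'$, not $\det P\cdot\det P'$; this does not affect your final determinant matching.
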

\begin{proof}
From \cite[Proposition~10.3]{Jacobowitz-HermForm}, every $\sqrt{-2}$-modular $O_{K_2}$-lattice $L_2$ is isometric to $M_2\perp H_2(1)^m$ for some $m\geq 0$, where $M_2$ is  a $\sqrt{-2}$-modular plane with $\grn(M_2)=\grn(L_2)$.  Thus there exists a $\sqrt{-2}$-modular lattice in $(W_2, \psi_2)$ if and only if 
$(W_2, \psi_2)$ is isometric to $(M_2\perp H_2(1)^{(\dim W_2)/2-1})\otimes \Q_2$ for for some  $\sqrt{-2}$-modular plane $M_2$, or equivalently
\begin{equation}\label{eqn:RP-det}
   d\psi_2=dM_2\cdot (-1)^{\frac{\dim W_2}{2}-1}\in \Q_2^\times/\Nm(K_2^\times)\qquad \text{for some } M_2.  
\end{equation}
It remains to classify all $\sqrt{-2}$-modular $O_{K_2}$-planes and show that an $M_2$ satisfying \eqref{eqn:RP-det} always exists. From \cite[Proposition~10.4]{Jacobowitz-HermForm}, a $\sqrt{-2}$-modular plane $M_2$ is uniquely determined up to isomorphism by its norm $\grn(M_2)$ and determinant $dM_2\in \Q_2^\times/\Nm(O_{K_2}^\times)$. 
Combining \cite[ Proposition~9.1.a) and (9.1)]{Jacobowitz-HermForm}, we get
\begin{equation}\label{eqn:norm-bound-RP}
    \sqrt{-2}O_{K_2}=\grs(M_2)\supsetneqq \grn(M_2)\supseteq \grn(H_2(1))=4O_{K_2},
\end{equation}
where the equality $\grn(M_2)=\grn(H_2(1))$ holds if and only if $M_2\simeq H_2(1)$ by \cite[Proposition~9.2]{Jacobowitz-HermForm}. If $\grn(M_2)\supsetneqq\grn(H_2(1))$, then  $\grn(M_2)=2O_{K_2}$ by \eqref{eqn:norm-bound-RP}. On the other hand, since  $M_2$ is  a $\sqrt{-2}$-modular plane, its determinant $dM_2$ necessarily generates the ideal $2O_{K_2}$, so  $dM_2\in \{\pm 2\}\subseteq\Q_2^\times/\Nm(O_{K_2}^\times)$. A direct application of  \cite[Proposition~10.2.b)]{Jacobowitz-HermForm} implies that when   $\grn(M_2)=2O_{K_2}$,  we have  
\begin{equation}
    M_2\simeq \begin{dcases}
        \begin{bmatrix}
           -2 & \sqrt{-2} \\ -\sqrt{-2} & 4
       \end{bmatrix} &\text{if } dM_2=2; \\
       \begin{bmatrix}
           -2 & \sqrt{-2} \\ -\sqrt{-2} & 0
       \end{bmatrix} &\text{if } dM_2=-2. 
    \end{dcases} 
\end{equation}
The  lemma now follows immediately from the above classification of $M_2$. 
\end{proof}

 Suppose that $p\equiv 1\pmod{4}$ so that $\ell=2$ is a ramified prime in $K/\Q$ distinct from $p$.  We have seen in Lemma~\ref{lem:self-dual-OKl-lattice} that self-dual $O_{K_2}$-lattices exist in every non-degenerate hermitian $K_2$-space. It remains to classify them into isometric classes.   
 

\begin{lem}\label{lem:RU-self-dual-genus}
   Suppose that $p\equiv 1\pmod{4}$.  Let $(W_2, \psi_2)$ be an even dimensional nondegenerate hermitian $K_2$-space, and $L_2$ be a self-dual $O_{K_2}$-lattice in $(W_2, \psi_2)$.  
   \begin{enumerate}[(1)]
       \item[{\rm (1)}] If $d\psi_2=(-1)^{(\dim W_2)/2-1} \in \Q_2^\times/\Nm(K_2^\times)$, then $L_2$ is a normal lattice isometric to \[ (1)\perp (1)\perp H_2(0)^{(\dim W_2)/2-1}, \]
       and hence $\grn(L_2)=O_{K_2}$. 
       In particular, there is a unique isometric class of self-dual $O_{K_2}$-lattices in this case.  
       \item[{\rm (2)}] If $d\psi_2=(-1)^{(\dim W_2)/2} \in \Q_2^\times/\Nm(K_2^\times)$, then 
       \[L_2\simeq \begin{cases}
           (1)\perp (-1)\perp H_2(0)^{(\dim W_2)/2-1} & \text{if $L_2$ is normal};\\
            H_2(0)^{(\dim W_2)/2} & \text{if $L_2$ is subnormal}.
       \end{cases}\]
       In particular, 
       there are two isometric classes of self-dual $O_{K_2}$-lattices, distinguished by whether the lattice is normal or subnormal (or equivalently, by the norm of the lattice). If $L_2$ is normal, then $\grn(L_2)=O_{K_2}$; if $L_2$ is subnormal, then $\grn(L_2)=2O_{K_2}$.
   \end{enumerate}
\end{lem}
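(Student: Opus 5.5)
The plan is to rely on Jacobowitz's classification of unimodular lattices over a ramified-unit dyadic extension, exactly as Lemma~\ref{lem:RP-case} relied on his ramified-prime results. Here $K_2=\Q_2(\sqrt{-p})$ with $p\equiv 1\pmod 4$ is of type R-U, and the prime element is $\pi_2=1+\sqrt{-p}$, with $\Nm(\pi_2)=1+p\in 2\Z_2^\times$. First, since $L_2$ is self-dual, $\grs(L_2)=O_{K_2}$. The norm $\grn(L_2)$ is generated by elements of $\Q_2$, so it is a power of $2O_{K_2}$. It also contains $\Tr$ of the scale, which is $2O_{K_2}$ because the different is $2O_{K_2}$ in the R-U case. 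Hence $\grn(L_2)\in\{O_{K_2},2O_{K_2}\}$, according as $L_2$ is normal or subnormal.

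\textbf{Subnormal case.} By Jacobowitz (\cite[\S9--10]{Jacobowitz-HermForm}), a subnormal unimodular lattice splits as an orthogonal sum of unimodular planes of norm $2O_{K_2}$. Each such plane has the shape $\begin{bmatrix} 2a & 1\\ 1 & 2b\end{bmatrix}$ after scaling a basis vector. I would show each plane is isometric to $H_2(0)$ by using that the norm ideal is minimal, $\grn=\grn(H_2(0))=2O_{K_2}$ (\cite[Proposition~9.2]{Jacobowitz-HermForm}), or directly by solving for an isotropic vector with Hensel's lemma. This gives $L_2\simeq H_2(0)^{(\dim W_2)/2}$ and $dL_2=-1$. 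So a subnormal lattice exists only when $d\psi_2=(-1)^{(\dim W_2)/2}$.

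\textbf{Normal case.} A normal unimodular lattice has an orthogonal basis (\cite[Proposition~4.4]{Jacobowitz-HermForm}, or the usual diagonalization using a vector with unit norm), so $L_2\simeq(u_1)\perp\cdots\perp(u_n)$ with $u_i\in\Z_2^\times$. By \eqref{eqn:dyadic-normgp}, each $u_i$ is $\pm1$ modulo $\Nm(O_{K_2}^\times)$, so we may take $u_i=\pm1$. The key reduction is the identities
\[(1)\perp(1)\perp(1)\simeq (1)\perp H_2(0), \qquad (1)\perp(-1)\perp(\epsilon)\simeq(\epsilon)\perp H_2(0).\]
To prove them, I would use that $(1)\perp(-1)$ contains isotropic vectors, e.g.\ $e_1+e_2$, and that $(1)\perp(1)\perp(1)$ represents $0$ with a primitive vector whose pairing with the lattice is a unit, since $-1$ is a norm from $O_{K_2}$ up to sign considerations. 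Whenever a primitive isotropic vector $x$ with $\langle x,L_2\rangle=O_{K_2}$ exists, one finds $y$ with $\langle x,y\rangle=1$ and adjusts $y$ so that $\langle y,y\rangle\in 2\Z_2$ becomes $0$, producing a copy of $H_2(0)$. This step peels off copies of $H_2(0)$ until two diagonal entries remain. The remaining pair is $(1)\perp(\pm1)$, and the sign is forced by $d\psi_2=dL_2\cdot(-1)^{(\dim W_2)/2-1}$. A normal lattice cannot be all hyperbolic planes, since its norm would then be $2O_{K_2}$. This yields the two stated forms.

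Uniqueness within each case follows from these normal forms, and the norms are read off directly. I expect the main obstacle to be the splitting identities: producing a primitive isotropic vector in $(1)\perp(1)\perp(1)$, and adjusting $y$ to be isotropic. Both need care with dyadic units. For instance, one must check that $1+1$ is a norm $\Nm(\pi_2)\cdot$unit appropriately, and whether $2\in\Nm(K_2^\times)$ depends on $p\bmod 8$. I would first try to cite Jacobowitz's R-U classification (\cite[Proposition~10.2 and Theorem~10.? analogues for R-U]{Jacobowitz-HermForm}) directly; failing that, I would do the explicit vector computations.
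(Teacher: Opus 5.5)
Your route differs from the paper's. You diagonalize a normal lattice and peel off hyperbolic planes with explicit identities. The paper instead quotes \cite[Proposition~10.3]{Jacobowitz-HermForm} to reduce at once to $M_2\perp H_2(0)^{(\dim W_2)/2-1}$ with a single self-dual plane $M_2$, and then pins down $M_2$ using Jacobowitz's Propositions~9.1--9.3, 10.4 and 10.2.a).

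Your norm dichotomy $\grn(L_2)\in\{O_{K_2},2O_{K_2}\}$ and your subnormal case are fine. For the Hensel step, the ansatz $v=x(1+\sqrt{-p})e_1+e_2$ turns $\langle v,v\rangle=0$ into $a(1+p)x^2+x+b=0$, which is linear mod $2$, exactly as in Lemma~\ref{lem:R2-hyperbolic}.

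The normal case, however, rests on a false identity: $(1)\perp(1)\perp(1)\not\simeq(1)\perp H_2(0)$. The left side has determinant $1$ and the right side has determinant $-1$. Moreover $-1\notin\Nm(K_2^\times)$, because $(-1,-p)_2=-1$ for $p\equiv 1\pmod 4$; this is precisely the content of \eqref{eqn:dyadic-normgp}. Your justification ``$-1$ is a norm from $O_{K_2}$'' is the opposite of the truth; in fact $\Nm(O_{K_2}^\times)=1+4\Z_2$. The correct statement is $(1)\perp(1)\perp(1)\simeq(-1)\perp H_2(0)$.

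Two further steps of your peeling also need repair.

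First, replacing $y$ by $y+cx$ with $x$ isotropic changes $\langle y,y\rangle$ only by an element of $\Tr(O_{K_2})=2\Z_2$, so it cannot fix the parity of $\langle y,y\rangle$. For example, $(1)\perp(-1)$ contains the primitive isotropic vector $e_1+e_2$, yet it is normal and hence not isometric to $H_2(0)$. You must use a third summand to make $\langle y,y\rangle$ even. In $(1)\perp(-1)\perp(\epsilon)$, take $x=e_1+e_2$ and $y=e_1+e_3$. Then $\langle x,y\rangle=1$ and $\langle y,y\rangle=1+\epsilon\in 2\Z_2$, so you may replace $y$ by $y-\frac{1+\epsilon}{2}x$. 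Comparing determinants shows the complement is $(\epsilon)$.

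Second, the surviving pair can be $(-1)\perp(-1)$, and this is unavoidable when $\dim W_2=2$. You never show $(-1)\perp(-1)\simeq(1)\perp(1)$. It holds because $(1)\perp(1)$ primitively represents $-1$. Take $b=1+\sqrt{-p}$ and $a\in O_{K_2}^\times$ with $\Nm(a)=-2-p\in 1+4\Z_2$; then $\Nm(a)+\Nm(b)=-1$. This plane identity, combined with the mixed-sign identity at $\epsilon=-1$, also yields the corrected triple identity.

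With these fixes, or simply by citing Jacobowitz's Proposition~10.4 for planes, your argument goes through.
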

\begin{proof}
From \cite[Proposition~10.3]{Jacobowitz-HermForm} again, every self-dual $O_{K_2}$-lattice $L_2$ in $(W_2, \psi_2)$ is isometric to $M_2\perp H_2(0)^{(\dim W_2)/2-1}$ for some self-dual $O_{K_2}$-plane $M_2$  with \begin{equation}\label{eqn:113}
    \grn(M_2)=\grn(L_2)\qquad \text{and}\qquad dM_2 \cdot (-1)^{\frac{\dim W_2}{2}-1}=d\psi_2 \in \Q_2^\times/\Nm(K_2^\times).  
\end{equation} 
Since $M_2$ is self-dual, its determinant $dM_2$ lies in the group  $\Z_2^\times/\Nm(O_{K_2}^\times)$, which has been canonically identified with $\{\pm 1\}$ in \eqref{eqn:dyadic-normgp}. Similarly to the R-P case treated in Lemma~\ref{lem:RP-case},  from \cite[Proposition~10.4]{Jacobowitz-HermForm}   the self-dual plane $M_2$ is uniquely determined up to isomorphism by its norm $\grn(M_2)$ and determinant $dM_2\in \{\pm1\}$. 
Combining \cite[ Proposition~9.1.a) and (9.1)]{Jacobowitz-HermForm}, we get
\begin{equation}\label{eqn:norm-bound-RU}
    O_{K_2}=\grs(M_2)\supseteq \grn(M_2)\supseteq \grn(H_2(0))=2O_{K_2}.
\end{equation}
Now it follows from \cite[Propositions~9.2 and 9.3]{Jacobowitz-HermForm} and \eqref{eqn:norm-bound-RU} that the following are equivalent: 
\begin{itemize}
    \item $\grn(M_2)=\grn(H_2(0))$;
    \item $M_2$ (and hence $L_2$) is subnormal; 
    \item $M_2\simeq H_2(0)$, or equivalently,  $L_2\simeq H_2(0)^{(\dim W_2)/2}$.
\end{itemize}
Comparing the determinants, we see that there exists a self-dual $O_{K_2}$-lattice $L_2$ in $(W_2, \psi_2)$ with $L_2\simeq H_2(0)^{(\dim W_2)/2}$ if and only if $d\psi_2=(-1)^{(\dim W_2)/2}\in \Q_2^\times/\Nm(K_2^\times)$, in which case all subnormal self-dual $O_{K_2}$-lattices in $(W_2, \psi_2)$ form a single isometric class represented by such an $L_2$.

If $\grn(M_2)\supsetneqq \grn(H_2(0))$, then necessarily $M_2$ is normal with $\grn(M_2)=O_{K_2}$.  Combining \eqref{eqn:113} with \cite[Proposition~10.2.a)]{Jacobowitz-HermForm}, we find that 
\begin{equation}
    M_2\simeq \begin{dcases}
        (1)\perp (1) &\text{if } d\psi_2=(-1)^{(\dim W_2)/2-1}; \\
       (1)\perp (-1) &\text{if } d\psi_2=(-1)^{(\dim W_2)/2}. 
    \end{dcases} 
\end{equation}
Consequently, there always exists a unique isometric class of 
normal self-dual $O_{K_2}$-lattices in $(W_2, \psi_2)$ as described in the statement of the lemma.   
\end{proof}



\begin{proof}[Proof of Theorem~\ref{thm:main1}]
First, suppose there exists a $\sqrt{-p}$-modular
  $O_K$-lattice in the $n$-dimensional positive-definite hermitian $K$-space $(V, \varphi)$. We have seen in Lemma~\ref{lem:even-rk} and Corollary~\ref{cor:det-away-from-p} respectively that $2\mid n$  and $d\varphi\in \Grs$.  The latter already implies that $d\varphi=[1]$ in
  $\Q_+^\times/\Nm(K^\times)$ for the case $p\not\equiv  1\pmod{4}$  since $\Grs$ is trivial in this case by Proposition~\ref{prop:determinant}(2).  If $p\equiv 1\pmod{4}$, then $\Grs$ is a cyclic group of
  order $2$ whose generator $g\Nm(K^\times)$ is  characterized by \eqref{eq:9}. On the
  other hand,  locally at $p$ the existence of a $\sqrt{-p}$-modular $O_{K_p}$-lattice in $(V_p, \varphi_p)$ implies that $d\varphi_p=(-1)^{n/2}\in \Q_p^\times/\Nm(K_p^\times)$  by
  Lemma~\ref{lem:det-at-p}. The compatibility between the local and global determinants forces that 
  \begin{equation}\label{eqn:hbs-1}
     (d\varphi, -p)_p=(d\varphi_p, -p)_p=((-1)^{n/2}, -p)_p=\Lsymb{-1}{p}^{n/2}=1\neq (g, -p)_p,  
  \end{equation}
  Therefore, $d\varphi=[1]\in \Q_+^\times/\Nm(K^\times)$ for the case $p\equiv 1\pmod{4}$ as well. This proves part (1) of Theorem~\ref{thm:main1}.

Keep the existence assumption of $\sqrt{-p}$-modular $O_K$-lattices in $(V, \varphi)$. If $p\equiv 3\pmod{4}$,  a similar local-global compatibility consideration yields the desired condition $4\mid n$ on the dimension of $V$ as follows. Indeed, for this case we already know that  $d\varphi=[1]$ and $d\varphi_p=(-1)^{n/2}$. Putting them together, we get 
   \[1=(d\varphi, -p)_p=(d\varphi_p, -p)_p=((-1)^{n/2},
    -p)_p=\Lsymb{-1}{p}^{n/2}=(-1)^{n/2},\]
  which  implies that $4\mid n$.  This proves the necessity part of statement (2) of  Theorem~\ref{thm:main1}.

Conversely, suppose that $d\varphi=[1]$ and one of the following conditions
  holds:
  \begin{enumerate}
   \item[(a)] $p\equiv 3\pmod{4}$,  and $4\mid n$,  
   \item[(b)] $p\not\equiv 3\pmod{4}$,  and $2\mid n$.
  \end{enumerate}
  We prove that there exists a $\sqrt{-p}$-modular $O_K$-lattice $L$ in
  $(V, \varphi)$. Since $(V,\varphi)$ is positive definite with
  $d\varphi=[1]$, we may as well identify $(V,\varphi)$ with the
  standard hermitian space $(K^n, \langle~,~\rangle)$, whose Gram
  matrix with respect to the standard basis is given by the identity
  matrix.  Let $L_0\coloneqq O_K^n$ be the standard lattice in
  $(K^n, \langle~,~\rangle)$, which is clearly self-dual.  From Lemmas~\ref{lem:det-at-p} and~\ref{lem:RP-case},  there exists a $\sqrt{-p}$-modular
  $O_{K_p}$-lattice $N_p$ in  $(K_p^n, \langle~,~\rangle_p)$. 
   Now  from the local-global
  principle of lattices \cite[Proposition~4.21]{curtis-reiner:1}, 
  there exists an $O_K$-lattice $L$ in $K^n$ such that $L_p=N_p$ and
  $L_\ell=L_0\otimes \Z_\ell$ for every prime $\ell\neq p$. Such an
  $O_K$-lattice $L$ is $\sqrt{-p}$-modular by construction. This completes the proof of statement (2) of  Theorem~\ref{thm:main1}.

It remains to  classify the genera of $\sqrt{-p}$-modular $O_K$-lattices in $(V, \varphi)$. We keep the even dimensional hermitian space $(V, \varphi)= (K^n, \langle~,~\rangle)$ fixed as above and assume further that $4\mid n$ if $p\equiv 3\pmod{4}$. Let $L$ be a $\sqrt{-p}$-modular $O_K$-lattice in $(V, \varphi)$. If $\ell\in \bbN$ is a prime unramified in $K$, then there is a unique isometric class of self-dual $O_{K_\ell}$-lattices in $(V_\ell, \psi_\ell)$ by Lemma~\ref{lem:self-dual-OKl-lattice}. In particular, $L_\ell\simeq L_0\otimes Z_\ell$, where $L_0=O_K^n$ is the standard $O_K$-lattice as above. Thus the classification of genera is reduced to purely local classifications at the ramified primes.  Analogously, the computation of the norm $\grn(L)$ is reduced to local computations at the ramified primes since 
\[\grn(L)\otimes \Z_\ell=\grn(L_0\otimes Z_\ell)=O_{K_\ell} \qquad \text{for every unramified prime $\ell$}.\]

Suppose first that $p$ is odd.  From Lemma~\ref{lem:det-at-p}, there is a unique isometric class of $\sqrt{-p}$-modular $O_{K_p}$-lattices in $(V_p, \varphi_p)$, everyone of which has norm $pO_{K_p}$.  If we further assume that $p\equiv 3\pmod{4}$, then $p$ is the unique ramified prime for $K/\Q$. Thus in this case  there is a unique genus of $\sqrt{-p}$-modular $O_K$-lattices in $(V, \varphi)$, all of which have norm $pO_K$.  

The $p\equiv1\pmod{4}$ case is more involved, as $\ell=2$ is an extra prime ramified in $K$.  In light of above mentioned uniqueness result locally at $p$,   the number of genera of $\sqrt{-p}$-modular $O_K$-lattices in $(V, \varphi)$ coincides with the number of isometric classes of self-dual $O_{K_2}$-lattices in $(V_2, \varphi_2)$.
It has been shown in  Lemma~\ref{lem:RU-self-dual-genus} that the classification of self-dual lattices in $(V_2, \varphi_2)$ depends on the relationship between the determinant $d\varphi_2$ and the parity of $n/2$.  Since $d\varphi_2$ has been fixed to be $1$ by the assumption, our discussion is separated  into two sub-cases depending on the parity of $n/2$. If $n\equiv 2\pmod{4}$, then by  Lemma~\ref{lem:RU-self-dual-genus} there is a unique isometric class of self-dual $O_{K_2}$-lattices in $(V_2, \varphi_2)$,  all of which have norm $O_{K_2}$.  Therefore, in this case there is a unique genus of $\sqrt{-p}$-modular $O_K$-lattice in $(V, \varphi)$, all of which have norm $pO_K$. If $4\mid n$, then there are two isometric classes of self-dual $O_{K_2}$-lattices in $(V_2, \varphi_2)$, distinguished according to whether the corresponding norm is equal to $O_{K_2}$ or $2O_{K_2}$.  Consequently, when $p\equiv 1\pmod{4}$ and $4\mid n$,  there are two genera  of $\sqrt{-p}$-modular $O_K$-lattice in $(V, \varphi)$, distinguished according to whether the corresponding norm is equal to $pO_K$ or $2pO_K$. 

Lastly, suppose that $p=2$. In this case, $2$ is the only prime ramified in $K$.  
By Lemma~\ref{lem:RP-case}, the number of isometric classes of  $\sqrt{-2}$-modular $O_{K_2}$-lattices in $(V_2, \varphi_2)$ again depends on the parity of $n/2$ since $d\varphi_2=1$. If $n\equiv 2\pmod{4}$, then there is a unique isometric class of $\sqrt{-2}$-modular $O_{K_2}$-lattices in $(V_2, \varphi_2)$,  all of which have norm  $2O_{K_2}$.  If $4\mid n$, then there are two isometric classes of such lattices in $(V_2, \varphi_2)$ distinguished according to whether the corresponding norm is equal to  $2O_{K_2}$ or $4O_{K_2}$.   The global classification of the genera of $\sqrt{-p}$-modular $O_K$-lattices in $(V, \varphi)$ follows accordingly. 
\end{proof}





\section{$\sqrt{-p}$-modular hermitian $\Z[\sqrt{-p}]$-lattices when
  $p\equiv 3\pmod{4}$} \label{sec:4}

We keep the notation from the previous section and further assume that
$p\equiv 3\pmod{4}$ throughout this section, in particular, $p\neq 2$. The quadratic order
$R\coloneqq \Z[\sqrt{-p}]$ is a suborder of conductor $2$ in
$O_K\coloneqq \Z[\frac{-1+\sqrt{-p}}{2}]$.  Recall that an $R$-lattice $L$ in the positive definite hermitian $K$-space $(V, \varphi)$ is said to be \emph{$\sqrt{-p}$-modular} if $\sqrt{-p}L^\vee=L$, where     $L^\vee\coloneqq \{x\in V\mid \varphi(x, L)\subseteq R\}$ is the $R$-valued dual lattice of $L$.  
The goal of this section is
to give the necessary and  sufficient condition for the existence of
 $\sqrt{-p}$-modular $R$-lattices $L$ in $(V, \varphi)$ and to classify the genera of such lattices.

Firstly, an $O_K$-lattice is automatically an $R$-lattice. The following result follows directly from the combination of  Lemma~\ref{lem:two-duals} and Theorem~\ref{thm:main1}.
\begin{lem}\label{lem:modular-OK-lat-v2}
  There exists   an  $O_K$-lattice $L$ in $(V, \varphi)$ satisfying $\sqrt{-p}L^\vee=L$  if and only if $4\mid n$ and $d\varphi=[1]$, in which case there is a unique genus of such $O_K$-lattices. 
\end{lem}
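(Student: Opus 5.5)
The plan is to transport the problem through the bijection of Lemma~\ref{lem:two-duals} and then read off the answer from Theorem~\ref{thm:main1}, specialized to $p\equiv 3\pmod 4$.

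First, I apply Lemma~\ref{lem:two-duals}. Since $p\equiv 3\pmod 4$, it gives a bijection between $O_K$-lattices $M$ in $(V,\varphi)$ with $\sqrt{-p}M^\vee=M$ and $O_K$-lattices $L$ in $(V,\varphi)$ with $\sqrt{-p}L^{\star,\varphi}=L$. This bijection preserves isometry classes and genera, so both the existence question and the genus count may be transferred to the right-hand side. One point needs care: the proof of Lemma~\ref{lem:two-duals} uses the isometry $\vartheta\colon (V,\tfrac12\varphi)\to(V,\varphi)$, which requires $n$ even. So I first dispose of odd $n$, where both sides are empty by Lemma~\ref{lem:even-rk} applied at $p$, since $L^\vee\otimes\Z_p=L^{\star}\otimes\Z_p$. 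Having done so, I assume $n$ is even.

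Next, I apply Theorem~\ref{thm:main1} to the right-hand side with $p\equiv 3\pmod 4$.
\begin{itemize}
\item By part (1), existence forces $d\varphi=[1]$.
\item By part (2)(b), given $d\varphi=[1]$, existence holds if and only if $4\mid n$.
\item By part (3)(b), since we are not in the case $p\not\equiv 3\pmod 4$ with $4\mid n$, there is a unique genus.
\end{itemize}
Combining these three facts gives exactly the statement: existence if and only if $4\mid n$ and $d\varphi=[1]$, and in that case a unique genus.

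There is no serious obstacle here. The only subtlety is checking that the bijection of Lemma~\ref{lem:two-duals} respects genera in $(V,\varphi)$ itself, not merely in $(V,\tfrac12\varphi)$. This holds because $\vartheta$ is a global isometry, so it carries local isometry classes to local isometry classes at every $\ell$. I also note that genera are taken within the fixed space $(V,\varphi)$, so the identification of $(V,\tfrac12\varphi)$ with $(V,\varphi)$ for even $n$, via $d(\tfrac12\varphi)=2^{-n}d\varphi=d\varphi$, is all that is needed.
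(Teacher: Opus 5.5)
Your proposal is correct and matches the paper's argument: the paper obtains this lemma directly by combining Lemma~\ref{lem:two-duals} with Theorem~\ref{thm:main1} specialized to $p\equiv 3\pmod 4$, exactly as you do. Your extra checks (disposing of odd $n$, and that $\vartheta$ preserves genera within $(V,\varphi)$) are already built into Lemma~\ref{lem:two-duals} and its proof, so they are harmless but not strictly needed.
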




We now move on to the study of general $\sqrt{-p}$-modular
$R$-lattices.    Since $R_\ell=O_{K_\ell}$ for every odd prime
$\ell$ (including $\ell=p$), an $R$-lattice $L$ satisfies
$L^\vee=\sqrt{-p}L$ if and only if all of the following hold:
\begin{enumerate}[(i)]
  \item $L_2$ is a self-dual $R_2$-lattice in $(V_2, \varphi_2)$;
  \item $L_\ell$ is a self-dual $O_{K_\ell}$-lattice in $(V_\ell,
    \varphi_\ell)$ for every odd prime $\ell\neq p$;
  \item $L_p$ is a $\sqrt{-p}$-modular $O_{K_p}$-lattice in $(V_p,
      \varphi_p)$. 
\end{enumerate}

Combining Lemmas~\ref{lem:self-dual-OKl-lattice} and
\ref{lem:det-at-p} together with the short exact sequence
\eqref{eq:6}, we obtain the following lemma. 

\begin{lem}\label{lem:nec-cond-mod-latt}
If there exists a $\sqrt{-p}$-modular $R$-lattice in 
$(V, \varphi)$, then necessarily one of the following holds:
\begin{enumerate}[(i)]
\item[{\rm (a)}] $p\equiv 7\pmod{8}$, $4\mid n$  and $d\varphi=[1]$.
\item[{\rm (b)}] $p\equiv 3\pmod{8}$, $4\mid n$  and $d\varphi=[1]$. 
\item[{\rm (c)}]   $p  \equiv 3\pmod{8}$, $ n\equiv 2\pmod{4}$  and
  $d\varphi=[2]$. 
\end{enumerate}
\end{lem}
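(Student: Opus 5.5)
The plan is a purely local--global argument on the determinant $d\varphi\in\Q_+^\times/\Nm(K^\times)$, using the three local conditions (i)--(iii) listed just before the statement. Throughout, $p\equiv 3\pmod 4$, so by Proposition~\ref{prop:determinant}(2) the group $\Grs$ is trivial. Hence $\Q_+^\times/\Nm(K^\times)=\Gin$, with $\F_2$-basis $\Pin$ by Proposition~\ref{prop:determinant}(1). Suppose $L$ is a $\sqrt{-p}$-modular $R$-lattice in $(V,\varphi)$. First, by (iii), $L_p$ is a $\sqrt{-p}$-modular $O_{K_p}$-lattice. So $n$ is even by Lemma~\ref{lem:even-rk}, and $d\varphi_p=(-1)^{n/2}$ in $\Q_p^\times/\Nm(K_p^\times)$ by Lemma~\ref{lem:det-at-p}. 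Consequently $(d\varphi,-p)_p=\Lsymb{-1}{p}^{n/2}=(-1)^{n/2}$.

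Next I would pin down the support of $d\varphi$ in the basis $\Pin$. For every odd inert prime $\ell$, condition (ii) and Lemma~\ref{lem:self-dual-OKl-lattice}(2) give $(d\varphi,-p)_\ell=1$. Writing $d\varphi=\prod_{\ell\in S}[\ell]$ for a finite subset $S\subseteq\Pin$, the local symbol at an inert $\ell$ is $(-1)^{[\ell\in S]}$. Therefore $S$ contains no odd prime, and the only possible member of $S$ is $\ell=2$, which requires $2$ to be inert, i.e.\ $p\equiv 3\pmod 8$ by \eqref{eq:1}. So $d\varphi=[1]$ whenever $p\equiv 7\pmod 8$, and $d\varphi\in\{[1],[2]\}$ whenever $p\equiv 3\pmod 8$. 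Notably, condition (i) at $2$ is not even needed here beyond this bookkeeping.

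Finally I apply the product formula for Hilbert symbols, i.e.\ exactness of \eqref{eq:6}: $\prod_\ell (d\varphi,-p)_\ell=1$, where the infinite place contributes trivially since $d\varphi>0$. Split primes contribute $1$, and so do inert odd primes by the previous step. This leaves
\[ (-1)^{n/2}\cdot (d\varphi,-p)_2=1. \]
If $p\equiv 7\pmod 8$, then $2$ splits, so $(d\varphi,-p)_2=1$ and $4\mid n$, giving (a). If $p\equiv 3\pmod 8$, then $2$ is inert and $(d\varphi,-p)_2=(-1)^{\nu_2(d\varphi)}$. This symbol is $1$ for $[1]$, forcing $4\mid n$ (case (b)), and $-1$ for $[2]$, forcing $n\equiv 2\pmod 4$ (case (c)).

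I expect no serious obstacle. The only point needing care is identifying the local symbols at $p$ and at $2$ correctly, namely $\Lsymb{-1}{p}=-1$ and the split/inert dichotomy at $2$ from \eqref{eq:1}.
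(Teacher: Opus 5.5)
Your proposal is correct and follows essentially the same route as the paper. The conditions at the odd inert primes (Lemma~\ref{lem:self-dual-OKl-lattice}) confine $d\varphi$ to the subgroup generated by $[2]$, which is trivial when $2$ splits. The condition $(d\varphi,-p)_p=(-1)^{n/2}$ from Lemma~\ref{lem:det-at-p} then fixes $n \bmod 4$. The only cosmetic difference is that you use the product formula to move this last condition to the prime $2$, whereas the paper evaluates $(2,-p)_p$ directly at $p$; the two are equivalent.
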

\begin{proof}
  Since $p\equiv 3\pmod{4}$ by our assumption, we see from
  Proposition~\ref{prop:determinant} that an $\F_2$-basis of the
  elementary $2$-group $\Q_+^\times/\Nm(K^\times)$ is given by the set
  of inert primes
  $\Pin\coloneqq \left\{ \text{prime } \ell\in \bbN
    \,\middle\vert\, \Lsymb{K}{\ell}=-1\right \}$. 
  Suppose that there exists an $R$-lattice $L$  in $(V,
\varphi)$ satisfying $L^\vee=\sqrt{-p}L$. Then $L_\ell$ is  
a self-dual $O_{K_\ell}$-lattice in $(V_\ell,
    \varphi_\ell)$ for every odd prime $\ell\neq p$,  and $L_p$ is a $\sqrt{-p}$-modular $O_{K_p}$-lattice in $(V_p,
      \varphi_p)$.  
It follows from
Lemmas~\ref{lem:self-dual-OKl-lattice} and \ref{lem:det-at-p}
respectively  that
\begin{itemize}
\item $(d\varphi, -p)_\ell=1$
  for every odd prime $\ell\neq p$;
  \item  $n=\dim V$ is even and
$(d\varphi, -p)_p=(-1)^{n/2}$. 
\end{itemize}
  In particular, the $\ell$-adic valuation
$\nu_\ell(d\varphi)$ is even for every odd prime $\ell$ inert in
$K$. Thus $d\varphi$ lies in the subgroup of $\Q_+^\times/\Nm(K^\times)$ which is generated by the element
$2\Nm(K^\times)$, as $p$ is the only prime ramified in $K$ and $p\in \Nm(K^\times)$. 

First suppose that $p\equiv 7\pmod{8}$, then $2$ is split in $K$, and
$(2, -p)_2=(2, -p)_p=1$.  It follows from the short exact sequence
\eqref{eq:6} that $2\in \Nm(K^\times)$, so $d\varphi=[1]$ in this case. 
Comparing with the local condition $(d\varphi, -p)_p=(-1)^{n/2}$ at $p$, we conclude that $n$ is divisible
by $4$ in this case.

Next, suppose that $p\equiv 3\pmod{8}$, then $2$ is inert in $K$, and
$(2, -p)_2=(2, -p)_p=-1$.   Thus $2\Nm(K^\times)$ generates a cyclic
group of order $2$ in $\Q_+^\times/\Nm(K^\times)$, and this case is
further divided into two subcases depending on whether $d\varphi=[1]$ or
$d\varphi=[2]$.  In each subcase, the local equality  $(d\varphi,
-p)_p=(-1)^{n/2}$ translates into  the following condition on $n$ respectively:  
\[n\equiv
  \begin{cases}
    0 \pmod{4} &\text{if } d\varphi=[1];\\
        2 \pmod{4} &\text{if } d\varphi=[2]. 
  \end{cases}\qedhere
  \]
\end{proof}

The classification of self-dual hermitian $R_2$-lattices has been
carried out by the third named author, where
he establishes the following result in \cite[Theorems~1.1 and 1.2]{Yu-CF-PAMQ-2012}.

\begin{prop}\label{prop:yu-bij}
For every positive integer $n$, there is a bijection between the following two sets
\begin{equation}
  \label{eq:12}
\Latt_n\coloneqq \left\{\parbox{4.0cm}{isometric classes of
      self-dual hermitian $R_2$-lattices of
      $\Z_2$-rank
      $2n$}\right\}\longleftrightarrow
  \left\{\parbox{3.8cm}{isometric classes of symmetric
      $\F_2$-bilinear forms on the space
      $(\F_2)^n$}\right\} \eqqcolon \Bil_n.
\end{equation}   
Here all symmetric $\F_2$-bilinear forms on $\F_2^n$ are taken into
consideration, including the degenerate ones. 
\end{prop}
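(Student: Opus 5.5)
The plan is to deduce the bijection from the orthogonal decomposition of Theorem~\ref{thm:main.5}, applied to the local Bass order $R_2=\Z_2[\sqrt{-p}]$. First recall the relevant structure. Since $R_2=\Z_2+2O_{K_2}$, the maximal ideal of $R_2$ is $\mathfrak m=2O_{K_2}=(R_2:O_{K_2})$, and $R_2/\mathfrak m=\F_2$. The only proper overorder of $R_2$ is $O_{K_2}$. Hence every $R_2$-lattice $M$ of $\Z_2$-rank $2n$ has the form $M\simeq R_2^{a}\oplus O_{K_2}^{\,n-a}$, and the integer $a$ is a structural invariant of $M$.

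\emph{Step 1 (decomposition).} Let $M$ be self-dual. Theorem~\ref{thm:main.5} gives an orthogonal decomposition $M=M_1\perp M_2$. Here $M_1$ is a free $R_2$-lattice of rank $a$ carrying a perfect hermitian $R_2$-form $h_1$. The other summand $M_2$ is a free $O_{K_2}$-lattice of rank $n-a$, whose form is $2h_2$ with $h_2$ perfect and $O_{K_2}$-valued. This is legitimate because the generator $\alpha=2$ of $(R_2:O_{K_2})$ is $\sigma$-invariant. Since $2$ is unramified in $K$, Lemma~\ref{lem:self-dual-OKl-lattice} shows that $(M_2,h_2)$ is determined up to isometry by its rank and its determinant.

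\emph{Step 2 (the map to $\Bil_n$).} Reduce the form modulo $\mathfrak m$ to get a symmetric $\F_2$-bilinear form $\bar h$ on $M/\mathfrak mM$; it is symmetric because $\sigma$ acts trivially on $R_2/\mathfrak m$. The summand $M_2$ has values in $\mathfrak m$, so its image lies in the radical of $\bar h$. On $M_1/\mathfrak mM_1\simeq\F_2^a$ the form $\bar h_1$ is nondegenerate. Therefore the nondegenerate quotient of $(M/\mathfrak mM,\bar h)$ is canonically $(M_1/\mathfrak m M_1,\bar h_1)$, which is independent of the choice of decomposition. This holds even though $M_1$ itself need not be unique (Example~\ref{ex:no-witt}). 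I then send $M$ to the form $\bar h_1\perp 0^{\,n-a}$ on $\F_2^n$. Isometry classes in $\Bil_n$ are classified by the rank $r$ of the nondegenerate part together with, for even $r>0$, whether that part is alternating. So the map is well defined on isometry classes, and it suffices to match these invariants.

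\emph{Step 3 (surjectivity).} For a prescribed pair (rank $r$, type) I would lift explicitly. For the non-alternating case take $M_1=(1)^{\perp r}$. For the alternating case take $M_1$ an orthogonal sum of the $R_2$-planes $\left[\begin{smallmatrix}0&1\\1&0\end{smallmatrix}\right]$. In both cases set $M_2=O_{K_2}^{\,n-r}$ with form $2\cdot\mathrm{id}$.

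\emph{Step 4 (injectivity; the main obstacle).} One must show that the pair ($a$, type of $\bar h_1$) determines $M$ up to isometry. The first sub-step is that a free self-dual $R_2$-lattice whose reduction is alternating is an orthogonal sum of hyperbolic planes. For this I would lift a symplectic basis step by step, using completeness of $R_2$ and the standard Jacobowitz-style splitting of modular planes. The real difficulty is the non-alternating case. Norms from $R_2^\times$ are $x^2+py^2$ with $y$ even, and these cover only a proper subgroup of $\Z_2^\times$. So $\det M_1$ seems to carry an extra invariant (roughly $\pm1$ modulo norms), and $\mathrm{diag}(1,\dots,1,u)$ need not be isometric to $(1)^{\perp a}$. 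I expect to kill this invariant in one of two ways. One is to use the non-uniqueness of the decomposition, trading a unit between $M_1$ and $M_2$ when $n-a>0$. The other is to exhibit explicit isometries such as $(1)\perp(1)\perp(u)\simeq(1)\perp(1)\perp(1)$ in rank $\ge 3$, along with a separate check in ranks $1$ and $2$. Here I would first test small cases by hand in the two subcases $p\equiv 3$ and $p\equiv 7\pmod 8$, where $2$ is inert and split in $K$ respectively. If the determinant turns out to be a genuine extra invariant, the correct intrinsic map is the one from \cite{Yu-CF-PAMQ-2012}, and I would reverse-engineer it from these computations.
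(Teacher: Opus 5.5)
Your Steps 1--3 are fine, and Step 2 has a real advantage. The nondegenerate quotient of $(M/\mathfrak m M,\bar h)$ is intrinsic to $M$, so your map $\Latt_n\to\Bil_n$ is well defined without the Witt cancellation law that the paper invokes in Proposition~\ref{prop:orth-decomp-R2}. (In Step 1, $(M_2,h_2)$ is in fact determined by its rank alone: its determinant lies in $\Z_2^\times/\Nm(O_{K_2}^\times)=1$, so $M_2\simeq\calL_0^{n-a}$.) However, all the content of the bijection then sits in Step 4, namely that a free unimodular $R_2$-lattice is determined up to isometry by its reduction mod $\mathfrak m$. You do not prove this, and this is the genuine gap.

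Moreover, the obstruction you single out in Step 4 does not exist; it comes from a miscomputation. Elements of $R_2=\Z_2[\sqrt{-p}]$ are $x+y\sqrt{-p}$ with $x,y\in\Z_2$ arbitrary. By requiring $y$ even you have in effect computed norms from $\Z_2[2\sqrt{-p}]^\times$. Units with $x$ odd and $y$ even have norms $\equiv 1,5\pmod 8$. Units with $x$ even and $y$ odd have norms $\equiv p,\,p+4\equiv 3,7\pmod 8$. Hence $\Nm(R_2^\times)=\Z_2^\times$; equivalently, $[O_{K_2}^\times:R_2^\times]\in\{1,3\}$ is odd while $\Z_2^\times$ is pro-$2$, which is Remark~\ref{rem:line-plane}(1). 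So every unimodular $R_2$-line is isometric to $\calL_1$, and $\diag(1,\dots,1,u)\simeq\calL_1^{a}$ for every $u\in\Z_2^\times$; there is no determinant invariant to kill. Your fallback is also not coherent: your map is surjective, so an extra invariant would force $|\Latt_n|>|\Bil_n|$, and the proposition would simply be false.

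What you need to supply is the argument of Lemma~\ref{lem:R2-hyperbolic} and Proposition~\ref{prop:str-free-R2-latt}. First decompose $\bar M_1$ orthogonally into $\F_2$-lines and hyperbolic $\F_2$-planes. Lift each piece to a free sublattice with invertible Gram matrix, and split it off by Lemma~\ref{lem:orthog-decomp} (or Baeza's Cor.~I.3.3); the complement is again unimodular, and free because $R_2$ is local. A lifted line is $\simeq\calL_1$ by the norm computation above. A lifted plane $P$ lying over a hyperbolic $\F_2$-plane satisfies $\langle v,v\rangle\in\mathfrak m\cap\Z_2=2\Z_2$ for all $v\in P$. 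To conclude $P\simeq\calH$ you still need a primitive isotropic vector. The paper obtains one by Hensel's lemma, applied to a quadratic polynomial in $\Z_2[x]$ that is linear modulo $2$; the correction $v\mapsto v-\tfrac12\langle v,v\rangle u$ then finishes. Your appeal to Jacobowitz-style splitting of modular planes does not give this, since his classification concerns the maximal order $O_{K_2}$, not $R_2$. With these ingredients your Step 4 holds and your route is complete. It then yields the uniqueness of $[M_1]$ as a corollary, instead of needing Witt cancellation as an input.
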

 Although the
statements of \cite[Theorems~1.1 and 1.2]{Yu-CF-PAMQ-2012} look almost
identical, the classification of the hermitian $R_2$-lattices there
needs to be separated into two different cases and treated
individually according to whether $2$ is split or inert in $K$. In this
section we provide an alternative approach that treats both cases
uniformly.

The starting point of our approach is the observation that $R_2$ is a Bass order as studied in the original work of Bass  \cite{Bass-1962} himself. The precise definition of Bass orders and the structural
theorem of lattices over commutative Bass orders given by Borevich and
Faddeev \cite{MR0205980}  will be recalled in the next section (cf.~Curtis-Reiner~\cite[Section 37,
p.~789]{curtis-reiner:1}).   When applied to the present case, this theorem merely
says that every $R_2$-lattice $N$ in a free $K_2$-module admits
a direct sum  decomposition 
\begin{equation}
  \label{eq:13}
  N=N_1\oplus N_2,
\end{equation}
where $N_1$ is a free $R_2$-lattice, and $N_2$ is a free
$O_{K_2}$-lattice.  While the decomposition \eqref{eq:13} is not
necessarily unique in general,  the ordered pair of non-negative integers $(r, s)\coloneqq
(\rk_{R_2}(N_1), \rk_{O_{K_2}}(N_2))$ is uniquely determined by $N$
itself and will be called \emph{the type} of $N$.

\begin{rem}\label{rem:type-cal}
    The type $(r,s)$ of an $R_2$-lattice $N$ can be calculated as follows. Let $\wt{N}$ be the $O_{K_2}$-module generated by
$N$ in $N\otimes_{R_2} K_2$. 
Then $\wt{N}$ is a free $O_{K_2}$-lattice  with
$2\wt{N}\subseteq N\subseteq \wt{N}$. Since $R_2/2O_{K_2}\simeq \F_2$
while $\dim_{\F_2}(O_{K_2}/2O_{K_2})=2$,  the ordered pair $(r, s)$
can be computed by the following equation
\begin{equation}
  \label{eq:14}
\left\{  \begin{aligned}
           r+s&=\rk_{O_{K_2}}(\wt{N})=\rk_{K_2}(N\otimes\Q_2), \\
           r+2s&=\dim_{\F_2} (N/2\wt{N}).
  \end{aligned}\right.
\end{equation}
\end{rem}

\begin{defn}
Denote by $\calL_0$ the \emph{unimodular $O_{K_2}$-line} $O_{K_2}e$ equipped with a hermitian pairing $\langle~,~\rangle_0$ such that $\langle e, e\rangle_0=2$.
    
\end{defn}

Suppose that $N$ is equipped with a \emph{perfect} hermitian pairing $\langle~,~\rangle: N\times N\to R_2$, so that $(N, \langle~,~\rangle)$ forms a unimodular hermitian $R_2$-lattice.  
For simplicity, we often drop $\langle~,~\rangle$ from the notation and simply say that $N$ is a unimodular hermitian $R_2$-lattice whenever the hermitian pairing $\langle~,~\rangle$ is clear from the context. 
In the next section, we develop an orthogonal decomposition theorem (namely, Theorem~\ref{thm:orthogonal-decomp}) for unimodular hermitian lattices over commutative local Bass orders.  In the present case, this theorem combined with the Witt cancellation law \cite[Propositions~4.4 and 5.7]{Yu-CF-PAMQ-2012} of the third named author yields the following proposition. 

\begin{prop}\label{prop:orth-decomp-R2}
Let  $N$ be a unimodular hermitian $R_2$-lattice of type $(r, s)$. Then there is an orthogonal decomposition $N=N_1\perp N_2$
such that 
\begin{enumerate}
    \item[{\rm (1)}] $N_1$ is a unimodular free $R_2$-lattice of rank $r$ uniquely determined up to isometry by $N$;
    \item[{\rm (2)}] $N_2$ is a unimodular free $O_{K_2}$-lattice isometric to $\calL_0^s$. 
\end{enumerate}
In particular, if we write $\Latt_n^r$ for the subset of $\Latt_n$ consisting of all isometric classes of unimodular hermitian $R_2$-lattices of type $(r, n-r)$, then the map $\Latt_n^r\to \Latt_r^r$ sending each $[N]\in \Latt_n^r$ to  $[N_1]\in \Latt_r^r$ establishes a well-defined bijection. 
\end{prop}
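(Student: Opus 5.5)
The plan is to apply Theorem~\ref{thm:main.5} (proved in Section~\ref{sec:5} as Theorem~\ref{thm:orthogonal-decomp}) to the local Bass order $R_2=\Z_2[\sqrt{-p}]$, and then to use the Witt cancellation law of \cite[Propositions~4.4 and 5.7]{Yu-CF-PAMQ-2012} to pin down the summands. Since $O_{K_2}$ is the only proper overorder of $R_2$, a unimodular $N$ of type $(r,s)$ has structural invariants $R_2\subsetneq O_{K_2}$ with multiplicities $(r,s)$; if $r=0$ or $s=0$ only one of them occurs. The conductor $(R_2:O_{K_2})=2O_{K_2}$ is generated by the $\sigma$-invariant element $\alpha=2$. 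Theorem~\ref{thm:main.5} therefore gives an orthogonal decomposition $N=N_1\perp N_2$ in which $N_1$ is a free $R_2$-lattice of rank $r$ with perfect restricted pairing, and $N_2$ is a free $O_{K_2}$-lattice of rank $s$ on which $\langle~,~\rangle=2\langle~,~\rangle_2$ for some perfect hermitian $O_{K_2}$-pairing $\langle~,~\rangle_2$. Orthogonality holds in the classical sense because the form is hermitian.

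Next I would identify $N_2$. As an $O_{K_2}$-lattice, $(N_2,\langle~,~\rangle_2)$ is self-dual, and $2$ is unramified in $K$ when $p\equiv 3\pmod 4$. By Lemma~\ref{lem:self-dual-OKl-lattice} (and the remark following it), a self-dual $O_{K_2}$-lattice has an orthonormal basis in the inert case $p\equiv 3\pmod 8$, and is unique in its space in the split case $p\equiv 7\pmod 8$. Here I must check that $(N_2,\langle~,~\rangle_2)\simeq (1)^s$ in the split case as well. This holds because self-dual lattices of a given rank over $O_{K_2}\simeq\Z_2\times\Z_2$ are unique: the determinant class is trivial, since every unit is a norm. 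Scaling by $2$ then gives $N_2\simeq \calL_0^s$, which proves (2).

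For the uniqueness in (1), suppose $N=N_1\perp N_2=N_1'\perp N_2'$ are two such decompositions. By the previous paragraph $N_2\simeq N_2'\simeq\calL_0^s$, so $N_1\perp\calL_0^s\simeq N_1'\perp\calL_0^s$. Yu's Witt cancellation law should then give $N_1\simeq N_1'$. This cancellation step is the main obstacle, because Witt cancellation can fail over non-maximal orders (compare Example~\ref{ex:no-witt}). I would check that the precise hypotheses of \cite[Propositions~4.4, 5.7]{Yu-CF-PAMQ-2012} permit cancelling the summand $\calL_0$. If they do not apply verbatim, I would try to apply them to the whole lattice $N$ instead, or argue directly that $N_1$ is recovered intrinsically. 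One candidate is to consider the image of $N$ in $N/2\wt N$; another is to use the quotient of $N$ by its maximal sublattice that is an $O_{K_2}$-module.

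Finally, for the bijection $\Latt_n^r\to\Latt_r^r$: the map is well defined by (1). It is injective because $N\simeq N_1\perp\calL_0^{n-r}$ is determined by $[N_1]$. For surjectivity, given a unimodular free $R_2$-lattice $N_1$ of rank $r$, the lattice $N_1\perp\calL_0^{n-r}$ is unimodular over $R_2$: the $R_2$-dual of $\calL_0$ is $\{x:2\langle x,e\rangle_0\text{-values}\in R_2\}$, and this equals $O_{K_2}e$ since $2O_{K_2}\subseteq R_2$ and $2O_{K_2}$ is the largest $O_{K_2}$-ideal in $R_2$. By Remark~\ref{rem:type-cal} this lattice has type $(r,n-r)$, so it lies in $\Latt_n^r$ and maps to $[N_1]$.
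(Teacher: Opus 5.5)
Your proposal is correct and follows the paper's proof. Theorem~\ref{thm:orthogonal-decomp} with $\alpha=2$ gives $N=N_1\perp N_2$, Lemma~\ref{lem:self-dual-OKl-lattice} identifies $N_2\simeq\calL_0^s$, and uniqueness of $N_1$ comes from citing Yu's Witt cancellation law, exactly as in the paper. Your explicit check of the bijection $\Latt_n^r\to\Latt_r^r$, including the unimodularity of $\calL_0$ over $R_2$, fills in what the paper leaves implicit. Your fallback idea also works and avoids cancellation: $N_2/\grm_2N_2$ is precisely the radical of the reduced $\F_2$-form on $N/\grm_2N$, so $N_1/\grm_2N_1$ is intrinsic to $N$, and Proposition~\ref{prop:str-free-R2-latt} then recovers $N_1$ from it.
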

\begin{proof}
    From Theorem~\ref{thm:orthogonal-decomp}, there exists an 
orthogonal decomposition $N=N_1\perp N_2$ 
such that $N_1$ is a unimodular free $R_2$-lattice of rank $r$, and $N_2$ is a unimodular free $O_{K_2}$-lattice of rank $s$.  For each $i=1,2$, let $\langle~,~\rangle_i: N_i\times N_i\to R_2$ be the restriction of the   hermitian pairing $\langle~,~\rangle: N\times N\to R_2$ to $N_i$. The same argument as for \eqref{eq:11} shows that $\langle~,~\rangle_2$ takes values in $2O_{K_2}$, and $\frac{1}{2}\langle~,~\rangle_2: N_2\times N_2\to O_{K_2}$ defines a perfect hermitian pairing of $O_{K_2}$-lattices. 
Similarly,  $\calL_0$ is indeed a unimodular $R_2$-lattice because $\frac{1}{2}\langle~,~\rangle_0$ defines a perfect hermitian pairing on the $O_{K_2}$-line $O_{K_2}e$. 
Since $K/\Q$ is unramified at $2$, it follows from Lemma~\ref{lem:self-dual-OKl-lattice} that $N_2\simeq \calL_0^s$. Now we apply  the Witt cancellation law \cite[Propositions~4.4 and 5.7]{Yu-CF-PAMQ-2012}  to see that $N_1$ is uniquely determined up to isometry by $N$. 
\end{proof}

We should caution that the Witt cancellation law cited above depends crucially on the fact that $O_{K_2}$ is unramified over  $\Z_2$.  Such a result does not hold in general as demonstrated by Example~\ref{ex:no-witt}.  Proposition~\ref{prop:orth-decomp-R2} reduces the classification of unimodular hermitian $R_2$-lattices to the case where  the lattice in consideration is free over  $R_2$.

We  list some basic examples of unimodular hermitian free
$R_2$-lattices that will serve as building blocks of more general
ones. 

\begin{defn} \ 
    \begin{enumerate}        
        \item Denote by $\calL_1$ the \emph{unimodular $R_2$-line} $(R_2 e, \<~,~\>)$ with $\langle e, e\rangle=1$.
        \item Denote by $\calH$ the \emph{unimodular $R_2$-plane} $(R_2e_1\oplus R_2 e_2, \<~,~\>)$ 
  whose Gram matrix with respect to the standard basis $\{e_1, e_2\}$ is given by
  $\begin{bmatrix} 0 & 1 \\ 1 & 0
   \end{bmatrix}$.
    \end{enumerate}
\end{defn}

\begin{rem}\label{rem:line-plane}
\begin{enumerate}[label={\upshape(\roman*)}, align=right, widest=ii,  leftmargin=*]
\item[{\rm (1)}] Every unimodular free $R_2$-lattice of rank one is isomorphic to $\calL_1$. This
follows from the fact $\Nm(R_2^\times)=\Z_2^\times$, which can be
verified by a direct calculation. Alternatively, note that
$[O_{K_2}^\times: R_2^\times]\in \{1,3\}$ is always odd while
$\Z_2^\times$ is an abelian pro-2 group,  which implies that $
\Nm(R_2^\times)=\Nm(O_{K_2}^\times)=\Z_2^\times$. 
  
\item[{\rm (2)}]   Observe that  $\calH\not\simeq \calL_1^2$.  Indeed,  since
$\Tr_{K_2/\Q_2}(R_2)\subseteq 2\Z_2$, we have 
$\langle v, v\rangle\in 2\Z_2$ for every $v\in \calH$. The
same clearly fails for  $\calL_1^2$.
\end{enumerate}
\end{rem}

Conversely, the unimodular hyperbolic $R_2$-plane is characterized by the condition $\langle v, v\rangle\in 2\Z_2$ for all $v$'s as
shown by the following lemma.

\begin{lem}\label{lem:R2-hyperbolic}
Let $H$ be a unimodular hermitian free $R_2$-lattice
  of rank two.  If $\langle v, v\rangle\in 2\Z_2$ for all $v\in H$, then $H$ is isometric to the unimodular hyperbolic $R_2$-plane $\calH$. 
\end{lem}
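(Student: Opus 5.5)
We have $H=R_2e_1\oplus R_2e_2$ with Gram matrix $G=\begin{bmatrix} a & \beta\\ \beta^\sigma & b\end{bmatrix}$, where $a,b\in 2\Z_2$, $\beta\in R_2$, and $\det G=ab-\Nm(\beta)\in\Z_2^\times$. The plan is to find explicitly a new $R_2$-basis $f_1,f_2$ of $H$ with $\langle f_1,f_1\rangle=\langle f_2,f_2\rangle=0$ and $\langle f_1,f_2\rangle=1$.

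\emph{Step 1: $\beta$ is a unit.} Since $ab\in 4\Z_2$ and $\det G$ is a unit, $\Nm(\beta)$ is a unit in $\Z_2$. Any $\beta\in R_2$ with unit norm is a unit of $R_2$, because $\beta^{-1}=\beta^\sigma/\Nm(\beta)\in R_2$. Replacing $e_2$ by $\beta^{-1}e_2$, we may therefore assume $\beta=1$. Then $\det G=ab-1$ with $a,b\in 2\Z_2$.

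\emph{Step 2: find an isotropic primitive vector.} We look for $f_1=e_1+xe_2$ with $x\in R_2$ such that
\[\langle f_1,f_1\rangle=a+\Tr(x)+\Nm(x)\,b=0 .\]
Take $x=t\in\Z_2$. The condition becomes the quadratic equation $bt^2+2t+a=0$. Its discriminant is $4-4ab=4(1-ab)$. Since $ab\in4\Z_2$, we have $1-ab\equiv1\pmod 8$, so $1-ab$ is a square in $\Z_2$. The root $t=(-1+\sqrt{1-ab})/b$ (or $t=-a/2$ when $b=0$) lies in $\Z_2$: indeed $t=-a/(1+\sqrt{1-ab})$, and with the sign of the square root chosen $\equiv 1\pmod 4$, the denominator has valuation $1$. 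Then $f_1=e_1+te_2$ is isotropic, and $\{f_1,e_2\}$ is still an $R_2$-basis of $H$. Moreover $\langle f_1,e_2\rangle=1+tb$ is a unit.

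\emph{Step 3: complete to a hyperbolic basis.} Rescale $e_2$ so that $\langle f_1,e_2\rangle=1$; this is allowed because $1+tb\in R_2^\times$. Write $c=\langle e_2,e_2\rangle\in2\Z_2$ and set $f_2=e_2+yf_1$. Then
\[\langle f_2,f_2\rangle=c+\Tr(y^\sigma)=c+2\operatorname{Re}(y).\]
Taking $y=-c/2\in\Z_2$ makes $f_2$ isotropic, while $\langle f_1,f_2\rangle$ stays equal to $1$ because $f_1$ is isotropic. So $\{f_1,f_2\}$ is an $R_2$-basis with Gram matrix $\begin{bmatrix}0&1\\1&0\end{bmatrix}$, and hence $H\simeq\calH$.

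The main obstacle is Step 2: showing that the isotropic vector can be chosen inside the $R_2$-lattice (not just the $O_{K_2}$-span) and primitive. This is where the hypothesis $\langle v,v\rangle\in2\Z_2$ enters. It gives $a,b\in2\Z_2$, which forces $1-ab\equiv1\pmod 8$ and keeps the root integral. Everything else is routine basis manipulation.
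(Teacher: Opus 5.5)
\textbf{There is a genuine gap in Step 2.} Your overall strategy matches the paper's: normalize the off-diagonal entry to $1$, find a primitive isotropic vector, then complete it to a hyperbolic pair. Steps 1 and 3 are fine. The failure is the claim that $a,b\in 2\Z_2$ forces $1-ab\equiv 1\pmod 8$. You only get $ab\in 4\Z_2$, hence $1-ab\equiv 1\pmod 4$. When $a$ and $b$ both have $2$-adic valuation exactly $1$, $1-ab\equiv 5\pmod 8$ is not a square in $\Z_2$.

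A concrete instance satisfying every hypothesis of the lemma is the Gram matrix $\begin{bmatrix}2&1\\1&2\end{bmatrix}$. Its determinant $3$ is a unit, and $\langle v,v\rangle=2\Nm(x)+\Tr(xy^\sigma)+2\Nm(y)\in 2\Z_2$ for $v=xe_1+ye_2$ because $\Tr(R_2)\subseteq 2\Z_2$. For this lattice your equation becomes $2(t^2+t+1)=0$, which has no root even in $\Q_2$. This is structural, not a matter of choosing the square root differently. Restricting to $x=t\in\Z_2$ means looking for an isotropic vector of the binary $\Z_2$-form $at_1^2+2t_1t_2+bt_2^2$. Here that form is twice the norm form of the unramified quadratic extension of $\Q_2$, so it is anisotropic. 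The hermitian plane is isotropic only because of the extra room in $R_2$ outside $\Z_2$, which your choice throws away.

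\textbf{The missing idea} is to use a coefficient in $R_2\smallsetminus\Z_2$ whose norm is highly divisible by $2$. The paper looks for an isotropic vector of the form $t(1+\sqrt{-p})e_1+e_2$ with $t\in\Z_2$. In your normalization this is $x=t(1+\sqrt{-p})$.

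Since $\Tr(1+\sqrt{-p})=2$ and $\Nm(1+\sqrt{-p})=1+p\in 4\Z_2$, the condition becomes $a+2t+b(1+p)t^2=0$. Dividing by $2$ gives $\tfrac{a}{2}+t+\tfrac{b(1+p)}{2}t^2=0$, whose quadratic coefficient lies in $4\Z_2$. Its reduction mod $2$ is monic linear, so Hensel's lemma gives a root $t\in\Z_2$. Equivalently, the discriminant factor $1-ab(1+p)\equiv 1\pmod{16}$ is now a square.

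With this choice, $\langle f_1,e_2\rangle=1+xb\in 1+2R_2\subseteq R_2^\times$ is still a unit, so your Step 3 goes through unchanged. Alternatively, you could first replace $e_1$ by $e_1+\sqrt{-p}\,e_2$. This makes the first diagonal entry $a+pb\in 4\Z_2$ when $a,b\in 2\Z_2^\times$, after which your discriminant argument works.
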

\begin{proof}
Let   $e_1, e_2$ be an $R_2$-basis of $H$. 
Since $(H,
  \langle~,~\rangle)$ is unimodular, the Gram matrix $
  \begin{bmatrix}
    \langle e_1, e_1 \rangle & \langle e_1, e_2 \rangle\\ \langle e_2,
    e_1 \rangle & \langle e_2, e_2 \rangle
  \end{bmatrix}$  lies in $\GL_2(R_2)$. By our assumption, both
  $\langle e_1, e_1 \rangle$ and $\langle e_2, e_2 \rangle$ belong to
  $2\Z_2$, which then forces $\langle e_1, e_2 \rangle\in
  R_2^\times$.     Replacing $e_2$ by $\langle e_1, e_2 \rangle^{-1}e_2$
  if necessary, we may and will assume that $\langle e_1, e_2
  \rangle=1$.

Next, we produce an isotropic vector in
  $H\smallsetminus \grm_2 H$,  where $\grm_2=2O_{K_2}$ is the unique maximal
  ideal of $R_2$.   Suppose  that $\langle
  e_1, e_1\rangle=2a$ and $\langle e_2, e_2\rangle=2b$ for some
 $a, b\in \Z_2$. We show that the following
  equation in variable $x$ has a solution in $\Z_2$: 
  \begin{equation}
    \label{eq:16}
      \langle (x+x\sqrt{-p})e_1+e_2,  (x+x\sqrt{-p})e_1+e_2\rangle=0.  
  \end{equation}
The left hand side of the equation  simplifies into $f(x)\coloneqq
a(1+p)x^2+x+b\in \Z_2[x]$, which reduces to a monic linear
  polynomial modulo $2$.  Now it follows from Hensel's Lemma
  \cite[Chapter~II, (4.6)]{Neukirch-ANT} that $f(x)$ admits a monic
  linear factor in $\Z_2[x]$, or equivalently, the equation $f(x)=0$
  is solvable in $\Z_2$.  The solution $x_0\in \Z_2$ gives rise to  an
  isotropic vector $(x_0+x_0\sqrt{-p})e_1+e_2$ in
  $H\smallsetminus \grm_2 H$. 

Lastly, let $u\in H\smallsetminus \grm_2 H$ be an isotropic
vector. Since $(H,
  \langle~,~\rangle)$ is unimodular, there exists $v\in H\smallsetminus \grm_2 H$ such that
  $\langle u, v\rangle=1$. Since $\langle v, v\rangle\in 2\Z_2$, we
  can replace $v$ by $v-\frac{1}{2}\langle v, v\rangle u$ if necessary
  to achieve $\langle v, v\rangle=0$ without affecting the equality
  $\langle u, v\rangle=1$. In conclusion, we have found a pair of
  vectors $u, v\in H\smallsetminus \grm_2 H$ satisfying
    \[\langle u, u\rangle=0, \qquad \langle u, v\rangle=1, \qquad
      \langle v, v\rangle=0. \] Necessarily, $u, v$ forms an
    $R_2$-basis of $H$ and this yields the desired isomorphism
    $(H, \langle~,~\rangle)\simeq \calH$.
\end{proof}

\begin{prop}\label{prop:str-free-R2-latt}
  Let $N$ be a unimodular hermitian free $R_2$-lattice of rank $r$. 
  \begin{enumerate}
  \item[{\rm (1)}] If $r$ is odd, then $N\simeq \calL_1\perp
    \calH^{(r-1)/2}$. 
\item[{\rm (2)}] If $r$ is even, then $N$ is isometric to 
  either $\calH^{r/2}$ or $\calL_1^2\perp
  \calH^{(r-2)/2}$. 
  \end{enumerate}
\end{prop}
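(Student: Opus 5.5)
We argue by induction on $r$, splitting off unimodular lines or planes one at a time. The basic dichotomy is whether $N$ is \emph{even}, meaning $\langle v,v\rangle\in 2\Z_2$ for all $v\in N$.

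\emph{Odd case.} Suppose some $v\in N$ has $\langle v,v\rangle\in\Z_2^\times$. Then $R_2v$ is a unimodular free line, hence isometric to $\calL_1$ by Remark~\ref{rem:line-plane}(1). Since the Gram entry is a unit, the projection $x\mapsto x-\langle x,v\rangle\langle v,v\rangle^{-1}v$ is defined over $R_2$. This gives $N=R_2v\perp v^\perp$, and $v^\perp$ is again a unimodular free $R_2$-lattice of rank $r-1$.

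\emph{Even case.} Suppose $\langle v,v\rangle\in 2\Z_2$ for all $v$. Pick $u\in N\smallsetminus\grm_2N$. By unimodularity and freeness there is $w$ with $\langle u,w\rangle=1$. The Gram matrix of $H:=R_2u+R_2w$ has diagonal entries in $2\Z_2$ and off-diagonal entry $1$, so its determinant is a unit. Hence $H$ is a unimodular free plane and $N=H\perp H^\perp$, where $H^\perp$ is free because it is a direct summand of a free module over a local ring. Lemma~\ref{lem:R2-hyperbolic} then gives $H\simeq\calH$. Iterating, an even $N$ is $\calH^{r/2}$; in particular $r$ must be even, since a rank-one piece would be $\calL_1$, which is odd.

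\emph{Reduction to a normal form.} Iterating the two steps writes any $N$ as $\calL_1^a\perp\calH^b$. It remains to reduce $a$. The key relation is
\[
\calL_1^3\simeq\calL_1\perp\calH .
\]
To prove it, note that in $\calL_1^3$ with basis $e_1,e_2,e_3$ the vector $f=e_1+e_2$ has $\langle f,f\rangle=2$. We look for an even unimodular plane inside $\calL_1^3$ whose orthogonal complement is a line. Concretely, take $x=e_1+e_2$ and $y=e_2+e_3$. Their Gram matrix is $\begin{bmatrix}2&1\\1&2\end{bmatrix}$, which has determinant $3\in\Z_2^\times$ and even diagonal. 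So $R_2x+R_2y$ is a unimodular even plane, isometric to $\calH$ by Lemma~\ref{lem:R2-hyperbolic}. Its orthogonal complement is a unimodular free line, hence $\calL_1$.

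Applying this relation repeatedly reduces $a$ to $0$, $1$ or $2$. Since $a\equiv r\pmod 2$, this gives:
\begin{itemize}
\item[(1)] for odd $r$, $a=1$, so $N\simeq\calL_1\perp\calH^{(r-1)/2}$;
\item[(2)] for even $r$, $a\in\{0,2\}$, so $N\simeq\calH^{r/2}$ or $N\simeq\calL_1^2\perp\calH^{(r-2)/2}$.
\end{itemize}

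\emph{Expected obstacle.} The main point to check is that the orthogonal complements stay free $R_2$-lattices, so the induction stays inside the free category. For the odd-line and even-plane splittings this is automatic: the Gram matrix of the split-off piece is invertible over $R_2$, so orthogonal projection is defined over $R_2$, and $N=P\perp P^\perp$ with $P^\perp$ a direct summand. A direct summand of a free module over the local ring $R_2$ is free. The $3\times3$ relation is the only genuinely computational step, and it is handled by the explicit plane above.
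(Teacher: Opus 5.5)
Your proof is correct, and it takes a different route from the paper. The paper works modulo $\grm_2$. It quotes the classification of nondegenerate symmetric bilinear $\F_2$-spaces as $\overline{\calL}_1\perp\overline{\calH}^{(r-1)/2}$, $\overline{\calH}^{r/2}$ or $\overline{\calL}_1^2\perp\overline{\calH}^{(r-2)/2}$. It then lifts that orthogonal splitting of $N/\grm_2N$ to $N$ using Baeza's Corollary~I.3.3, which it asserts carries over to the hermitian setting. Finally it identifies the lifted lines and planes with $\calL_1$ and $\calH$ via Remark~\ref{rem:line-plane}(1) and Lemma~\ref{lem:R2-hyperbolic}.

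You stay on the lattice throughout. Splitting off unit-norm lines, or even unimodular planes, by explicit orthogonal projection replaces Baeza's lifting result. The identity $\calL_1^3\simeq\calL_1\perp\calH$ replaces the $\F_2$-classification; it is the integral lift of the key relation behind that classification.

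One step needs an extra sentence. The plane $R_2x+R_2y$ is even not merely because its diagonal Gram entries are $2$. It is even because $\langle \alpha x+\beta y,\alpha x+\beta y\rangle=2\Nm(\alpha)+2\Nm(\beta)+\Tr(\alpha\beta^\sigma)$ and $\Tr_{K_2/\Q_2}(R_2)\subseteq 2\Z_2$, as in Remark~\ref{rem:line-plane}(2). Only with that in hand does Lemma~\ref{lem:R2-hyperbolic} apply.

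Your route is self-contained and avoids the unproved hermitian extension of Baeza's result. The paper's route shows at the same time that reduction modulo $\grm_2$ gives a bijection $\Latt_r^r\to\Bil_r^r$, which the proof of Proposition~\ref{prop:yu-bij} relies on. You can recover that bijection from your classification by noting that $\calH^{r/2}$ and $\calL_1^2\perp\calH^{(r-2)/2}$ have non-isometric reductions (one alternating, the other not). That point goes beyond the statement at hand.
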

\begin{proof}
Let $\overline{N}\coloneqq N/\grm_2N$ be the reduction of $N$ modulo $\grm_2$. Then  $\overline{N}$ is a vector space of dimension $r$ over the finite field  $\F_2=R_2/2O_{K_2}$ of two elements. The perfect hermitian pairing $\langle~,~\rangle: N\times N\to R_2$ induces a nondegenerate symmetric $\F_2$-bilinear form on $\overline{N}$, which is still denoted by $\langle~,~\rangle$ by an abuse of notation. For example, $\overline{\calL}_1$ is an $\F_2$-line  $\F_2\bar{e}$ with $\langle \bar{e}, \bar{e}\rangle =1$, and $\overline{\calH}$ is  a hyperbolic plane $\F_2\bar{e}_1\oplus \F_2\bar{e}_2$ with 
\[\langle \bar{e}_1, \bar{e}_1\rangle =0, \qquad \langle \bar{e}_1, \bar{e}_2\rangle =1, \qquad \langle \bar{e}_2, \bar{e}_2\rangle =0. \]
The finite dimensional nondegenerate symmetric bilinear $\F_2$-spaces have been classified by the third named author in \cite[Lemma~4.7]{Yu-CF-PAMQ-2012}, where it is shown that every such space splits as an orthogonal direct sum of lines and hyperbolic planes.  More precisely, we have 
 \begin{enumerate}
        \item if $r$ is odd, then $\overline{N}\simeq \overline{\calL}_1\perp \overline{\calH}^{(r-1)/2}$;
        \item if $r$ is even, then $\overline{N}$ is isometric to either  $\overline{\calH}^{r/2}$ or $\overline{\calL}_1^2\perp \overline{\calH}^{(r-2)/2}$. 
    \end{enumerate}
Now we apply \cite[Corollary~I.3.3]{Baeza-QF-semilocal} to lift the orthogonal  splitting of $\overline{N}$ back to an orthogonal splitting of $N$.  Indeed, \cite[Corollary~I.3.3]{Baeza-QF-semilocal} shows that for every orthogonal decomposition $\overline{N}=\overline{P}\perp \overline{Q}$, there exists an orthogonal decomposition $N=P\perp Q$ of $N$ into unimodular $R_2$-sublattices with $P$ free over $R_2$, and $\overline{P}=P/\grm_2 P$, $\overline{Q}=Q/\grm_2 Q$.  
Apriori, \cite[Corollary~I.3.3]{Baeza-QF-semilocal} is only stated for quadratic or symmetric bilinear modules, but the same proof applies to the hermitian case as well.  Since $R_2$ is local, every finitely generated stably-free $R_2$-module is free. In particular, the $R_2$-lattice $Q$ is  free.  This allows us to apply \cite[Corollary~I.3.3]{Baeza-QF-semilocal} inductively to lift every line and hyperbolic plane in the orthogonal direct sum of  $\overline{N}$ into copies of  $\overline{\calL}_1$ and  $\overline{\calH}$ as above.  From Remark~\ref{rem:line-plane} (1), the lift of every $\F_2$-line is isomorphic to $\calL_1$. Similarly, Lemma~\ref{lem:R2-hyperbolic} shows that the lift of every hyperbolic $\F_2$-plane is isomorphic to $\calH$.    This proves the desire  decomposition of $N$ into orthogonal direct sums of copies of $\calL_1$ and $\calH$. 
\end{proof}

\begin{proof}[Proof of Proposition~\ref{prop:yu-bij}]
    Recall that  $\Bil_n$ denotes the set of isometric classes of symmetric $\F_2$-bilinear forms (including the degenerate ones) on the standard $n$-dimensional $\F_2$-space  $\F_2^n$. 
   It naturally decomposes into a disjoint union $\Bil_n=\coprod_{r=0}^n \Bil_n^r$ of subsets, where each  $\Bil_n^{r}\subset \Bil_n$ denotes the subset of  the classes with rank $r$.  For example, $\Bil_n^n$ consists of precisely the set of nondegenerate classes on $\F_2^n$.   
    Given  a symmetric bilinear form $b_r$ of rank $r$ on $\F_2^n$ , we write $R(b_r)$ for the kernel of $b_r$.   From~\cite[\S XV.1]{Lang-Algebra},  we have a canonical bijection $\Bil_n^r\to \Bil_r^r$ that sends each $[b_r]\in\Bil_n^r$ to the isometric class of  induced nondegenerate symmetric bilinear form $\bar{b}_r: \F_2^n/R(b_r)\times \F_2^n/R(b_r)\to \F_2$. 
    

    Similarly,the set $\Latt_n$  of isometric classes of $\Z_2$-rank $2n$ unimodular hermitian $R_2$-lattices also admits a natural decomposition $\Latt_n=\coprod_{r=0}^n \Latt_n^r$ into subset of classes of type $(r, n-r)$.  We have seen in Proposition~\ref{prop:orth-decomp-R2} that there is a canonical bijection $\Latt_n^r\to \Latt_r^r$ sending each class $[N]\in \Latt_n^r$ to the class $[N_1]\in \Latt_r^r$ for an orthogonal decomposition of $N=N_1\perp N_2$ with $N_1$ free of rank $r$ over $R_2$. On the other hand, it is shown in the proof of Proposition~\ref{prop:str-free-R2-latt} that reduction modulo $\grm_2$ induces a bijection $\Latt_r^r\to \Bil_r^r$ sending $[N_1]$ to $[N_1/\grm_2N_1]$.

     Now the desired bijection $\Latt_n\to \Bil_n$ is obtained by piecing together all the above bijections  for all $0\leq r\leq n$:
     \[\Latt_n=\coprod_{r=0}^n \Latt_n^r\cong \coprod_{r=0}^n \Latt_r^r\cong \coprod_{r=0}^n \Bil_r^r\cong \coprod_{r=0}^n \Bil_n^r=\Bil_n.\qedhere\]   
\end{proof}

Finally, we return to the classification of $\sqrt{-p}$-modular hermitian $R$-lattices in $(V, \varphi)$, where $(V, \varphi)$ is a positive-definite hermitian $K$-space of dimension $n$. In Lemma~\ref{lem:nec-cond-mod-latt},   we have listed some necessary conditions for the existence of such lattices in $(V, \varphi)$. We show that these necessary conditions are also sufficient and count the number of genera of  $\sqrt{-p}$-modular  $R$-lattices in $(V, \varphi)$.

\begin{thm}\label{thm:4.12}
    Keep the assumption that $p\equiv 3\pmod{4}$. \begin{enumerate}
        \item[{\rm (1)}] There exists a $\sqrt{-p}$-modular $R$-lattice $L$ in $(V, \varphi)$ if and only if 
    \begin{enumerate}[(a)]
        \item $p\equiv 7\pmod{8}$, $4\mid n$  and $d\varphi=[1]$; or  \label{item:i}
        \item $p\equiv 3\pmod{8}$, $4\mid n$ and $d\varphi=[1]$; or  \label{item:ii}
    \item   $p  \equiv 3\pmod{8}$, $ n\equiv 2\pmod{4}$  and $d\varphi=[2]$.\label{item:iii}
    \end{enumerate}
    \item[{\rm (2)}] Suppose that one of the conditions above holds so that there exists a $\sqrt{-p}$-modular $R$-lattice in $(V, \varphi)$. Then each genus of such lattices is uniquely determined by the corresponding isometric class of unimodular $R_2$-lattices in $(V_2, \varphi_2)$ at the prime $2$.  Moreover, there are $3n/2+1$ genera in case~\ref{item:i}, $n+1$ genera in case~\ref{item:ii}, and $n/2$ genera in case~\ref{item:iii}.    
    \end{enumerate}     
\end{thm}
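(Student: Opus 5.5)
Necessity in part (1) is exactly Lemma~\ref{lem:nec-cond-mod-latt}, so the work lies in sufficiency and in counting genera. My plan is to reduce both to a purely local problem at $2$, using the characterization (i)--(iii) of $\sqrt{-p}$-modularity given at the start of this section.

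\emph{Step 1: the odd primes.} For an odd prime $\ell\neq p$ I will check that $d\varphi_\ell=1$ in $\Q_\ell^\times/\Nm(K_\ell^\times)$ in each of the cases (a)--(c). If $d\varphi=[1]$ this is clear. If $d\varphi=[2]$ (case (c)) and $\ell$ is inert, then $2$ is an $\ell$-adic unit, so $(2,-p)_\ell=1$. Hence Lemma~\ref{lem:self-dual-OKl-lattice} gives a self-dual $O_{K_\ell}$-lattice in $(V_\ell,\varphi_\ell)$, unique up to isometry since $\ell$ is unramified. At $\ell=p$, I will use the Hilbert symbol computation in the proof of Lemma~\ref{lem:nec-cond-mod-latt}: in each case $(d\varphi,-p)_p=(-1)^{n/2}$. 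Then Lemma~\ref{lem:det-at-p} gives a $\sqrt{-p}$-modular $O_{K_p}$-lattice, unique up to isometry. Consequently the genus of a $\sqrt{-p}$-modular $R$-lattice is determined by the isometry class of its completion $L_2$. Conversely, every unimodular $R_2$-lattice in $(V_2,\varphi_2)$ occurs as some $L_2$: take any $R$-lattice in $V$ and modify it at the finitely many bad primes using the local-global principle \cite[Proposition~4.21]{curtis-reiner:1}, exactly as in the proof of Theorem~\ref{thm:main1}. So both existence and the genus count reduce to the following: count the isometry classes of unimodular $R_2$-lattices $N$ of $K_2$-rank $n$ whose ambient space $N\otimes\Q_2$ is isometric to $(V_2,\varphi_2)$. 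Since hermitian $K_2$-spaces are classified by rank and determinant \cite{Jacobowitz-HermForm}, this is a condition on $\det(N\otimes\Q_2)$ only.

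\emph{Step 2: the local determinant of a class of type $(r,s)$.} By Proposition~\ref{prop:orth-decomp-R2} and Proposition~\ref{prop:str-free-R2-latt}, a unimodular lattice of type $(r,n-r)$ decomposes as $N_1\perp\calL_0^{\,n-r}$ with $N_1$ free unimodular over $R_2$. Thus $d(N\otimes\Q_2)=u\cdot 2^{\,n-r}$ for some unit $u\in\Z_2^\times$, because $\langle e,e\rangle_0=2$.
\begin{itemize}
\item If $p\equiv 7\pmod 8$, then $2$ splits, $\Q_2^\times/\Nm(K_2^\times)$ is trivial, and every class in $\Latt_n$ lies in $(V_2,\varphi_2)$.
\item If $p\equiv 3\pmod 8$, then $2$ is inert and the local determinant is read off from the parity of the valuation, namely $n-r$. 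Case (b) ($d\varphi_2=1$) requires $r$ even. Case (c) ($d\varphi=[2]$, so $d\varphi_2$ is a non-norm) requires $r$ odd, since $n$ is even.
\end{itemize}

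\emph{Step 3: counting.} By Proposition~\ref{prop:yu-bij} and its proof, the classes of type $(r,n-r)$ are in bijection with $\Bil_r^r$. By Proposition~\ref{prop:str-free-R2-latt}, $|\Bil_r^r|$ equals $1$ for $r=0$, $1$ for $r$ odd, and $2$ for $r>0$ even. Summing over $0\le r\le n$:
\begin{itemize}
\item all $r$ (case (a)): $1+n/2+2\cdot n/2=3n/2+1$ classes;
\item even $r$ (case (b)): $1+2\cdot n/2=n+1$ classes;
\item odd $r$ (case (c)): $n/2$ classes.
\end{itemize}
Each count is positive, which also yields existence in part (1).

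The step I expect to need the most care is the determinant bookkeeping at $2$. One must verify that $\calL_0$ contributes exactly a factor $2$ to the determinant of the ambient space, and that the determinant of a free unimodular $R_2$-lattice is a unit, so that the parity of $n-r$ alone detects $d\varphi_2$. One must also check that the global class $[2]$ really is non-trivial at $2$ and trivial at every other prime, which uses $(2,-p)_2=(2,-p)_p=-1$ from Lemma~\ref{lem:nec-cond-mod-latt}.
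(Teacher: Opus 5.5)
Your proposal is correct and follows the paper's argument. You reduce to the local classification at $2$ using uniqueness at odd $\ell\neq p$ and at $p$, and you count types $(r,n-r)$ via Propositions~\ref{prop:orth-decomp-R2} and~\ref{prop:str-free-R2-latt}, imposing the parity constraint from the $2$-adic determinant when $2$ is inert. The one difference is existence. You derive it uniformly from positivity of the local count at $2$ plus the local--global principle; the paper instead cites Lemma~\ref{lem:modular-OK-lat-v2} in cases (a) and (b), and in case (c) builds a lattice with $L_2=\calL_1\perp\calL_0^{n-1}$. You also make explicit that every local class at $2$ is realized by a global lattice, which the paper uses tacitly.
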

\begin{proof}
   (1) First, suppose that conditions \ref{item:i} or \ref{item:ii} hold so that $4\mid n$ and $d\varphi=[1]$. From Lemma~\ref{lem:modular-OK-lat-v2},  there exists a $\sqrt{-p}$-modular  $O_K$-lattice in $(V, \varphi)$, so these two conditions are also sufficient. Next, suppose that condition~\ref{item:iii} holds.  In this case we identify $(V,\varphi)$ with the hermitian space $(K^n, \langle~,~\rangle)$, whose Gram matrix with respect to the standard basis is given by the diagonal matrix $\diag(1, \cdots, 1, 2)$. The standard $R$-lattice $L_0\coloneqq R^n$  in $(K^n, \langle~,~\rangle)$ is self-dual at every odd prime $\ell$. At the prime $p$,  condition~\ref{item:iii} implies that 
   \[(d\varphi, -p)_p=(2, -p)_p=-1=(-1)^{(\dim V)/2}, \]
   so it follows from  Lemma~\ref{lem:det-at-p} that  there exists a $\sqrt{-p}$-modular
  $O_{K_p}$-lattice $N_p$ in  $(K_p^n, \langle~,~\rangle_p)$. At the prime $2$, if we put $M_2\coloneqq \calL_1\perp \calL_0^{n-1}$, then $d(M_2\otimes_{\Z_2}\Q_2)=2^{n-1}$, from which it follows that we can identify $M_2\otimes \Q_2$ with $(V_2, \varphi_2)$ since they share the same determinant in $\Q_2^\times/\Nm(K_2^\times)$ as $n-1$ is odd. Now we apply the local-global
  principle of lattices \cite[Proposition~4.21]{curtis-reiner:1} to obtain an $R$-lattice $L$ in $K^n$ such that 
  \[L_p=N_p, \quad L_2=M_2, \quad\text{and}\quad L_\ell=L_0\otimes \Z_\ell \quad \text{for every odd prime $\ell\neq p$.} \]
Such an
  $R$-lattice $L$ is $\sqrt{-p}$-modular by construction. This prove the sufficiency of condition~\ref{item:iii}.


(2) Suppose that one of the three conditions in part (1) of the theorem holds. To classify the genera of $\sqrt{-p}$-modular  $R$-lattices in $(V, \varphi)$, observe that  $R_\ell$ is maximal at every odd prime $\ell$, and $p$ is the only ramified prime in $K/\Q$. Given two such $R$-lattices $L, L'$ in $(V, \varphi)$, we have $L_\ell\simeq L_\ell'$ for every odd prime $\ell\neq p$ by Lemma~\ref{lem:self-dual-OKl-lattice}, and $L_p\simeq L_p'$ by Lemma~\ref{lem:det-at-p}. Thus the classification of genera is reduced to purely local classification at the prime $2$. 
A unimodular $R_2$-lattice $L_2$ in $(V_2, \varphi_2)$ necessarily has type $(n-s, s)$ for some $0\leq s\leq n$. We treat the three different cases separately. 

First,  suppose that condition~\ref{item:i} holds. In this case $2$ is split in $K$ as $p\equiv 7\pmod{8}$, so there is a unique isometric class of nondegenerate hermitian $K_2$-space of rank $n$.  Combining Propositions~\ref{prop:orth-decomp-R2} and \ref{prop:str-free-R2-latt}, we see that 
\begin{equation}
    L_2\simeq \begin{dcases*}
        \calL_0^n & if $s=n$,\\
        \calL_1\perp \calH^{(n-s-1)/2}\perp \calL_0^s & if $s$ is odd,\\
        \calH^{(n-s)/2}\perp \calL_0^s \text{ or } \calL_1^2\perp\calH^{(n-s-2)/2}\perp \calL_0^s & if  $s$ is even and $s<n$. 
    \end{dcases*}
\end{equation}
It follows that there are $3n/2+1$ genera of $\sqrt{-p}$-modular $R$-lattices in $(V, \varphi)$ in this case. 

Next,  suppose that condition~\ref{item:ii} holds. In this case $2$ is inert in $K$, and $\Q_2/\Nm(K_2^\times)$ is a cyclic group of order $2$ generated by $2\Nm(K_2^\times)$.  The assumption $d\varphi=[1]$ then forces $s$ to be even for every unimodular $R_2$-lattice $L_2$ in $(V_2, \varphi_2)$. Indeed, if $s$ is odd, then $L_2\simeq \calL_1\perp \calH^{(n-s-1)/2}\perp \calL_0^s$, which 
leads to a contradiction since the determinant of $(\calL_1\perp \calH^{(n-s-1)/2}\perp \calL_0^s)\otimes \Q_2$ does not match with that of $(V_2, \varphi_2)$: 
\[ d((\calL_1\perp \calH^{(n-s-1)/2}\perp \calL_0^s)\otimes\Q_2)=(-1)^{(n-s-1)/2}2^s=2\neq d\varphi \in \Q_2/\Nm(K_2^\times). \]
Thus we have the following possibilities for the isometric class of $L_2$ in this case: 
\begin{equation}
    L_2\simeq \begin{dcases*}
        \calL_0^n & if $s=n$,\\
        \calH^{(n-s)/2}\perp \calL_0^s \text{ or } \calL_1^2\perp\calH^{(n-s-2)/2}\perp \calL_0^s & if  $s$ is even and $s<n$.
    \end{dcases*}
\end{equation}
It then follows that there are $n+1$ genera of $\sqrt{-p}$-modular $R$-lattices in $(V, \varphi)$.

Lastly, suppose that condition~\ref{item:iii} holds. The same argument as above shows that $s$ must be odd for this case. Therefore, every unimodular $R_2$-lattice $L_2$ in $(V_2, \varphi_2)$ is isometric to $\calL_1\perp \calH^{(n-s-1)/2}\perp \calL_0^s$ for some odd $s$ between $0$ and $n$. It follows that there are $n/2$ genera   of $\sqrt{-p}$-modular $R$-lattices in $(V, \varphi)$ in this case. 
\end{proof}

\section{unimodular lattices over commutative  local Bass orders}\label{sec:5}

Throughout this section, we fix a complete discrete valuation ring $Z$
and denote its field of fractions by $Q$. Let $E$ be a commutative
semi-simple  
$Q$-algebra, and $R$ be a  $Z$-order (of full rank) in
$E$. By an \emph{$R$-lattice} we mean a finite $R$-module $M$ in some
free $E$-module $V$ such that $M\otimes_Z Q=V$.  Suppose that $R$ is stable under a (possibly trivial) involution $a\mapsto a^\sigma$ of $E/Q$. Following \cite[\S XIII.7]{Lang-Algebra}, a \emph{sesquilinear form} on $M$ is a bi-additive pairing
\begin{equation}
  \label{eq:17}
\langle~,~\rangle:   M\times M \to R
\end{equation}
that is $R$-linear in its first variable and $(R,\sigma)$-linear in its second
variable. Let  $\varepsilon\in O_E^\times$ be a unit with $\varepsilon\varepsilon^\sigma=1$.  The sesquilinear form $\langle~,~\rangle$ is said to be \emph{$\varepsilon$-hermitian} if $\langle y, x\rangle=\varepsilon\langle x, y\rangle^\sigma$ for
all $x,y\in M$, in which case the unit $\varepsilon$ necessarily further satisfies $\varepsilon^\sigma\langle x, y\rangle\in R$ for all $x, y\in M$.  As usual, $1$-hermitian forms are simply called \emph{hermitian forms}, and $(-1)$-hermitian forms \emph{skew-hermitian forms}. If further the involution $\sigma$ is trivial, we obtain the usual symmetric and skew-symmetric bilinear forms. 



Return to the more general setting where $\langle~,~\rangle$ is sesquilinear.  
The form
$\langle~,~\rangle$ induces an $(R, \sigma)$-linear map
\begin{equation}
  \label{eq:18}
 \Phi: M\to \Hom_R(M, R), \qquad x \mapsto \langle \cdot, x \rangle.
\end{equation}
 If the above map $\Phi$ is
bijective, then we say that the sesquilinear form $\langle~,~\rangle$ is \emph{perfect}, or
equivalently, the sesquilinear $R$-lattice $(M, \langle~,~\rangle)$ is \emph{unimodular}; see \cite[Proposition~XIII.7.2]{Lang-Algebra}.  Very often the
form $\langle~,~\rangle$ will be clear from the context, so we drop it
from the notation and simply say that $M$ is a sesquilinear
 $R$-lattice.

The goal of this section is to provide an orthogonal decomposition theorem (namely, Theorem~\ref{thm:orthogonal-decomp}) for unimodular sesquilinear $R$-lattices. In classical literature such as those by
Baeza \cite{Baeza-QF-semilocal}, Knebusch \cite {Knebusch-1977} or
Knus \cite{Knus-1991}, it is customary to assume that $M$ is
projective (or equivalently, free over $R$ since here $R$ is
semilocal). However, practical applications such as the one considered
in the present paper call for more general study of such lattices
without the prior projective assumption. In this section, we show that
every unimodular lattice over a local \emph{Bass} order $R$ admits an
orthogonal decomposition such that each summand is essentially a free
unimodular lattice over an overorder of $R$. This reduces the study of
such lattices to the classical setting. See also Riehm
\cite{Riehm-1984} for a classification of hermitian lattices over (not
necessarily commutative) local hereditary orders.

Recall from \cite[\S37]{curtis-reiner:1} that a $Z$-order $R$ in
$E$ is \emph{Gorenstein} if $\Hom_Z(R, Z)$ is a projective $R$-module.
In an influential paper \cite{Bass-MathZ-1963}, Bass notes that
Gorenstein rings are ubiquitous, and he has also identified a subclass
of Gorenstein rings with many good structural properties that nowadays bear his name.
 Let $O_E$ be the maximal $Z$-order of
$E$, which exists as $Z$ is a complete discrete valuation ring. Equivalent characterizations of commutative
Bass orders in $E$ are given as follows 
\begin{enumerate}[(i)]
\item $R$ is a  \emph{Bass order}, that is,  every overorder
  $R'\supseteq R$ in $E$ is Gorenstein;
\item every ideal of $R$ can be generated by at most $2$-elements;
  \item the quotient $O_E/R$ is a cyclic $R$-module. 
\end{enumerate}
Characterization (ii) above is due to Bass \cite[\S7]{Bass-MathZ-1963}
himself, and (iii) is due to Borevich and Faddeev
\cite{MR0190187}, who  have
also obtained a classification theorem  \cite{MR0205980} of lattices over commutative
Bass orders (see also Curtis-Reiner~\cite[Section 37,
p.~789]{curtis-reiner:1}\footnote{The separability assumption on $E$ in \cite[Section 37,
p.~789]{curtis-reiner:1} is to ensure the existence of the maximal order $O_E$, which is the case in our situation. Thus, this assumption is not needed. Also see the equivalence of (i) and (iii) in \cite[2.1]{Levy-Wiegand-1985}.} for a summary). In the present setting, their
theorem  shows that every lattice $M$ over a Bass order $R$ determines an 
ascending chain of overorders
\begin{equation}
  \label{eq:20}
  R\subseteq R_1\subsetneq R_2 \subsetneq \cdots \subsetneq R_m, 
\end{equation}
and a decomposition of $M$ into a direct sum of $R$-submodules 
\begin{equation}
  \label{eq:19}
  M=M_1\oplus \cdots \oplus M_m
\end{equation}
such that each $M_i$ admits a free $R_i$-module structure of rank $r_i>0$.  The
chain of orders (\ref{eq:20}) and the $m$-tuple $(r_1, \cdots, r_m)\in
\Z_{>0}^m$ is uniquely determined by the $R$-isomorphism class of $M$, so henceforth they will be called the \emph{structural
invariants} of $M$. However, it should be noted that the $M_i$ themselves are not
uniquely determined by $M$.  Nevertheless,  each $M_i$ will be
called an \emph{isotypic  component} of $M$, so being an isotypic
component of $M$ is equivalent to being a direct summand of 
 $M$ that admits a free $R_i$-module structure of rank $r_i$ for some
 $1\leq i \leq m$. 
For each $1\leq i \leq m$, let $e_1^{(i)},
\cdots, e_{r_i}^{(i)}$  be an $R_i$-basis of $M_i$. We collect these
bases together and call the ordered set
\begin{equation}
  \label{eq:26}
\scrB_M\coloneqq
\{e_1^{(1)}, \cdots, e_{r_1}^{(1)}, \cdots, e_1^{(m)}, \cdots,
e_{r_m}^{(m)}\}  
\end{equation}
a \emph{pseudo-basis} of $M$. Clearly, $\scrB_M$ forms
an $E$-basis of the free $E$-module $M\otimes_Z Q$.


From the lifting of idempotents
\cite[Corollary~7.6]{Eisenbud-Com-alg}, if $R$ is not local, then it
factors as a direct product $R_1\times R_2$ of
$Z$-orders. Characterization (iii)  above by  Borevich and Faddeev then shows that $R$ is Bass
if and only if both $R_1$ and $R_2$ are Bass. Thus, the study of unimodular sesquilinear $R$-lattices can be reduced to the case where $R$ is local.
For simplicity,  we
assume that $R$ is a local Bass order for the rest of this
section. From the classification of Drozd-Kirichenko-Roiter
\cite[\S IX, Theorems~6.5 and 6.14, and Lemmas~6.6 and 6.16]{Roggenkamp-Latt-II}, the localness
assumption on $R$ implies the following properties about the algebra $E$ and
the chain of orders in \eqref{eq:20}:
\begin{itemize}
\item $E$ is either a field or a product $E_1\times E_2$ of two
  fields;
\item  each $R_i$ is local for $1\leq i\leq m-1$, and $R_m$ is not
  local if and only if $E=E_1\times E_2$ and $R_m=O_{E_1}\times
  O_{E_2}$;
\item if $R$ is stable under a nontrivial involution $a\mapsto
  a^\sigma$ of $E/Q$, then so is each $R_i$ by the uniqueness of the
  minimal overorder;
  
\item if $E=E_1\times E_2$, then each $R_i$ is a  \emph{subdirect sum}
  of $O_{E_1}\times O_{E_2}$, meaning that  the canonical projection
  maps $\pr_j: R_i\to O_{E_j}$ is surjective for every  $j\in \{1,2\}$. 
\end{itemize}


To study unimodular sesquilinear $R$-lattices, it is necessary to understand the
structure of $\Hom_R(M,R)$.  We shall prove in
Corollary~\ref{cor:pseudo-basis-dual} that $\Hom_R(M,R)$ is always
isomorphic to $M$ and write down a pseudo-basis of it. From the
decomposition \eqref{eq:19}, to show $\Hom_R(M,R)\simeq M$, it is enough
to show that $\Hom_R(R', R)$ is a free $R'$-module of rank one for
every overorder $R'\supseteq R$ in $E$.  For this purpose let us 
recall the classical \emph{colon-quotient operation} of fractional
$R$-ideals.  By definition, a fractional $R$-ideal $\gra$ is a finitely
generated $R$-submodule of $E$ with $\gra\otimes_Z Q=E$. 
Given two fractional $R$-ideals $\gra, \grb$ in $E$, we
put
\begin{equation}
  \label{eq:21}
  (\gra:\grb)\coloneqq \{x\in E\mid x\grb\subseteq\gra\}, 
\end{equation}
which is canonically isomorphic to $\Hom_R(\grb, \gra)$.  It follows from the definition that $((\gra:\grb):\grc)=(\gra: \grb \grc)$.

\begin{defn}
The
\emph{relative conductor} $\grf_{R'/R}$ of an overorder $R'\supseteq
R$  is defined as
\[\grf_{R'/R}\coloneqq \Ann_R(R'/R)=(R: R')\simeq \Hom_R(R', R).\]    
\end{defn}

Clearly, $\grf_{R'/R}$ is the largest ideal of $R'$ contained in $R$,
and is also the largest ideal of $R$ which is an ideal of $R'$.
\begin{lem}\label{lem:rel_cond}
  The relative conductor $\grf_{R'/R}$ is a principal $R'$-ideal, that
  is, there exists an $\alpha'\in E^\times\cap R$ such that
  $\grf_{R'/R}=R'\alpha'$. In particular, $\Hom_R(R', R)\simeq R'$. 
\end{lem}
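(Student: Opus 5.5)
We will use the Gorenstein property of $R'$, which holds because $R$ is Bass. Write $\omega_{R'}:=\Hom_Z(R',Z)$; the Bass hypothesis says $\omega_{R'}$ is a projective $R'$-module. Let $\omega_R:=\Hom_Z(R,Z)$. Since $R$ is itself Gorenstein and local, $\omega_R$ is projective of rank one over a local ring, hence free: $\omega_R=R\,\tau$ for some $\tau\in\Hom_Z(R,Z)$.

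To realize everything inside $E$, identify $\Hom_Z(\gra,Z)$ with a fractional ideal via the trace pairing when $E/Q$ is separable. More robustly, use the generator itself: $\gra\mapsto\{x\in E\mid \tau(x\gra)\subseteq Z\}$, with $\tau$ extended $Q$-linearly to $E$ (its nondegeneracy follows from $\omega_R=R\tau$ being a full lattice in $\Hom_Q(E,Q)$). Under this identification $R$ corresponds to $R$, and $\Hom_Z(R',Z)$ corresponds to
$$R'^{\#}:=\{x\in E\mid \tau(xR')\subseteq Z\}.$$

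The key step is the equality $R'^{\#}=(R:R')$. Containment $\supseteq$ holds because $xR'\subseteq R$ gives $\tau(xR')\subseteq\tau(R)\subseteq Z$. For $\subseteq$: if $\tau(xR')\subseteq Z$, then for every $r\in R$ we have $\tau(rxR')\subseteq\tau(xR')\subseteq Z$. Hence $xR'\subseteq\{y\mid\tau(yR)\subseteq Z\}=R^{\#}=R$, using $\omega_R=R\tau$ and double duality for $Z$-lattices. Therefore $\grf_{R'/R}=(R:R')=R'^{\#}\simeq\Hom_Z(R',Z)$ as $R'$-modules.

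Now $\Hom_Z(R',Z)$ is a projective $R'$-module of rank one, by the Gorenstein property of $R'$ (and it is faithful of full rank). Decompose $R'$ as a finite product of local orders; there is at most two factors here, by the listed properties. A rank-one projective module over a semilocal commutative ring is free, so $\grf_{R'/R}=R'\alpha'$ for some $\alpha'\in E$. Because $\grf_{R'/R}$ contains a $Q$-basis of $E$, $\alpha'$ is a unit of $E$. Because $\alpha'\in\grf_{R'/R}\subseteq R$, we have $\alpha'\in E^\times\cap R$. Finally, $\Hom_R(R',R)\simeq(R:R')=R'\alpha'\simeq R'$, where the last isomorphism is multiplication by $\alpha'^{-1}$.

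The main obstacle is the careful identification of $\Hom_Z(R',Z)$ with $(R:R')$ when $R$ is only assumed Gorenstein rather than monogenic. In particular, we must check that $\tau$ induces a nondegenerate pairing with the needed double-duality. An alternative route avoids $\tau$ altogether: use the adjunction $\Hom_R(R',R)\simeq\Hom_R(R',\Hom_Z(\omega_R,Z))$ and $\omega_R\simeq R$ to get $\Hom_R(R',R)\simeq\Hom_Z(R'\otimes_R\omega_R,Z)\simeq\Hom_Z(R',Z)$ directly. I would try this second route first, since it is purely formal.
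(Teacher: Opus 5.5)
Your proof is correct, but it follows a different route from the paper's.

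The paper works entirely with colon ideals. It cites Brzezi\'nski's criterion: over the Gorenstein order $R'$, an $R'$-ideal is principal once it is proper, i.e.\ once $(\grf_{R'/R}:\grf_{R'/R})=R'$. It then checks properness via $(\grf_{R'/R}:\grf_{R'/R})=(R:R'\grf_{R'/R})=(R:(R:R'))=R'$. The last equality is Bass's theorem that fractional ideals over a one-dimensional Gorenstein order are reflexive.

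You instead use Gorenstein duality for $R$, in the form $\Hom_Z(R,Z)=R\tau$, to identify $(R:R')$ with $\Hom_Z(R',Z)$ as $R'$-modules. You then use the Gorenstein property of $R'$ exactly as the paper defines it, namely projectivity of $\Hom_Z(R',Z)$. You finish with freeness of rank-one projectives over the semilocal ring $R'$, which is a finite product of local orders.

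Your verification of $R'^{\#}=(R:R')$ is sound. The appeal to double duality is unnecessary: $R^{\#}=R$ follows directly from $\Hom_Z(R,Z)=R\tau$ and the nondegeneracy of $\tau$. The adjunction route $\Hom_R(R',R)\simeq\Hom_R(R',\Hom_Z(R,Z))\simeq\Hom_Z(R',Z)$ is an even cleaner way to get the same identification.

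What your approach buys is self-containedness and a conceptual explanation: the relative conductor is the canonical module of $R'$. It needs neither of the two external results the paper cites. The paper's argument buys brevity, at the cost of handing the real content to Brzezi\'nski and Bass. Both proofs use only that $R$ and $R'$ are Gorenstein, not the full Bass hypothesis.
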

\begin{proof}
By our assumption $R$ is Bass, so 
$R'$ is Gorenstein. From \cite[Corollary~2.7]{Brzezinski-loc-Princ}, 
to show that $\grf_{R'/R}$ is a principal
$R'$-ideal,  it is enough
to show that $(\grf_{R'/R}: \grf_{R'/R})=R'$, that is, the $R'$-ideal $\grf_{R'/R}$ is $R'$-proper. Now we calculate
directly to obtain
\[
  \begin{split}
  (\grf_{R'/R}: \grf_{R'/R})&=((R: R'): \grf_{R'/R})=(R:
                              R'\grf_{R'/R})\xeq{(\dagger)}(R:
                              \grf_{R'/R})     \\
    &=(R: (R: R'))\xeq{(\ddagger)} R'. 
  \end{split}
\]
Here $(\dagger)$ holds since $\grf_{R'/R}$ is an $R'$-ideal, and
$(\ddagger)$ holds because $R$ is a Gorenstein order of Krull
dimension $1$, so every fractional $R$-ideal (particularly, $R'$) is
reflexive by the original work of Bass
\cite[Theorem~6.3]{Bass-MathZ-1963} (see also \cite[Characterization A 2.6]{Gorenstein-orders-JPAA-2015}).
\end{proof}

 By the assumption, $R$ is stable under the involution $\sigma\in \Aut(E/Q)$. We have seen that $R'$ is also $\sigma$-stable, which in turn implies that $\grf_{R'/R}$ is $\sigma$-stable as well.  Thus for any  $R'$-generator $\alpha'$ of $\grf_{R'/R}$, we have 
    \begin{equation}\label{eqn:5.7}
        (\alpha')^\sigma=u'\alpha' \qquad \text{for some}\quad  u'\in R'^\times \quad \text{satisfying} \quad u'(u')^\sigma=1. 
    \end{equation}
\begin{lem}
     There exists a $\sigma$-invariant generator  of $\grf_{R'/R}$ if one of the following conditions holds: 
 \begin{enumerate}[(i)]
     \item[{\rm (i)}] the Galois cohomology group $H^1(\langle \sigma \rangle, R'^\times)$ is trivial;
     \item[{\rm (ii)}] $\sigma\neq \id$ and $R$ is an $O_F$-order, where $F=E^\sigma$ is the fixed subfield of $\sigma$. 
 \end{enumerate}
\end{lem}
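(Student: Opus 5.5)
We start from an arbitrary $R'$-generator $\alpha'$ of $\grf_{R'/R}$ together with the unit $u'\in R'^\times$ from \eqref{eqn:5.7}, which satisfies $(\alpha')^\sigma=u'\alpha'$ and $u'(u')^\sigma=1$. Replacing $\alpha'$ by $\alpha'v$ with $v\in R'^\times$ gives another generator, and $(\alpha'v)^\sigma=\alpha'v$ holds exactly when $u' v^\sigma=v$. So it suffices to find $v\in R'^\times$ with $v/v^\sigma=u'$, that is, to show that $u'$ is a coboundary.

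For (i), first check that $u'$ is a $1$-cocycle for the group $\langle\sigma\rangle$ acting on $R'^\times$. If $\sigma=\id$ the statement is trivial, since $\alpha'$ itself is $\sigma$-invariant. Otherwise $\langle\sigma\rangle$ has order $2$, and the cocycle condition reads $u'(u')^\sigma=1$, which is exactly the norm condition in \eqref{eqn:5.7}; it comes from applying $\sigma$ twice to $(\alpha')^\sigma=u'\alpha'$ and cancelling the non-zero-divisor $\alpha'\in E^\times$. Vanishing of $H^1(\langle\sigma\rangle,R'^\times)$ then gives $v\in R'^\times$ with $u'=v(v^\sigma)^{-1}$. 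Note that $u'$ depends on the choice of $\alpha'$ only up to a coboundary, so the conclusion does not depend on that choice.

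For (ii), we try to avoid cohomology and produce an explicit $\sigma$-invariant generator. Since $R\subseteq R'$ are $O_F$-orders, $\grf_{R'/R}$ is an $O_F$-module, and so is $\grf^\sigma=\grf$. The candidate generators are $\alpha'+(\alpha')^\sigma=(1+u')\alpha'$ and $\alpha'\beta+(\alpha'\beta)^\sigma$ for suitable $\beta\in R'$ (for instance $\beta=1$ or $\beta=\gamma$ with $\gamma\in R'$ anti-invariant or of small valuation). Each such trace lies in $\grf\cap E^\sigma$. The task is to show that one of them has the form $w\alpha'$ with $w\in R'^\times$, which amounts to showing that $1+u'$, or more generally $\beta+u'\beta^\sigma$, is a unit for some $\beta$.

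Hilbert 90 for the quadratic algebra $E/F$ gives $\beta\in E^\times$ with $u'=\beta/\beta^\sigma$. We would take $\beta=\theta+u'\theta^\sigma$ for $\theta\in R'$, and the key step is to choose $\theta$ so that this $\beta$ is a unit in $R'$. By locality of $R'$ (or semilocality when $R'=O_{E_1}\times O_{E_2}$), it is enough to check this modulo the Jacobson radical. There the map $\theta\mapsto\theta+\bar u'\bar\theta^\sigma$ is a twisted trace on the residue ring, and we need it to be nonzero on $R'$. The main obstacle is that in residue characteristic $2$ (the dyadic case) this twisted trace can vanish identically when $\sigma$ acts trivially on the residue ring. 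Our first attempt there is to use the $O_F$-structure: pick $\theta\in O_E\cap R'$ generating $R'$ over $O_F$ locally, and use that $R'$ is a subdirect sum, or, in the field case, an order $O_F+\pi^k O_E$. In these explicit forms of $R'$, $H^1(\langle\sigma\rangle,R'^\times)$ can be compared with $H^1(\langle\sigma\rangle,O_E^\times)$, so that in effect (ii) is reduced to (i) by a direct computation for these orders.
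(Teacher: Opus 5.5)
Your argument for (i) is correct and is the paper's: $u'$ is a $1$-cocycle, triviality of $H^1(\langle\sigma\rangle,R'^\times)$ gives $u'=v(v^\sigma)^{-1}$ with $v\in R'^\times$, and then $\alpha'v$ is a $\sigma$-invariant generator.

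Part (ii), however, is not proved, and neither of your two strategies can work as stated.

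\emph{The twisted-trace route.} Once the conclusion is known, $u'=v/v^\sigma$ for some $v\in R'^\times$. Then $\theta+u'\theta^\sigma=v\,\Tr_{E/F}(v^{-1}\theta)$. So your requirement that this be a unit for some $\theta\in R'$ is equivalent to $\Tr_{E/F}(R')=O_F$. This fails in the dyadic case whenever $R'=O_F+\pi^kO_E$ with $k\geq 1$, since then $\Tr_{E/F}(R')\subseteq 2O_F+\pi^kO_F$. It also fails when $R'=O_E$ with $E/F$ ramified dyadic.

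\emph{The fallback of reducing (ii) to (i).} This rests on a false premise, because $H^1(\langle\sigma\rangle,R'^\times)$ need not vanish under the hypotheses of (ii). Let $E/F$ be a ramified quadratic extension with uniformizers $\varpi_E,\varpi_F$, and take $R=O_F+\varpi_FO_E\subseteq R'=O_E$. The norm-one unit $\varpi_E/\varpi_E^\sigma$ is not of the form $w/w^\sigma$ with $w\in O_E^\times$: otherwise $\varpi_Ew^{-1}$ would lie in $F$ while having odd $E$-valuation. So $H^1\neq 1$, yet $\grf_{O_E/R}=\varpi_FO_E$ has the invariant generator $\varpi_F$. What must be shown is that the particular class of $u'$ determined by $\grf_{R'/R}$ is trivial. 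For that you have to use the $O_F$-order hypothesis to identify the conductor itself.

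That identification is the missing step, and it is elementary; you already wrote down the relevant shape of the orders. Since $\sigma\neq\id$ and $F$ is a field, $E$ is quadratic étale over $F$. Every order containing $O_F$ is then $O_F+\pi^kO_E$ for a uniformizer $\pi$ of $O_F$, so $R=O_F+\pi^kO_E\subseteq R'=O_F+\pi^{k'}O_E$ with $k'\leq k$. Take $x=a+\pi^ky\in R$ with $a\in O_F$ and $y\in O_E$. Then $xR'\subseteq R$ holds if and only if $a\pi^{k'}O_E\subseteq R$, i.e.\ $a\in\pi^{k-k'}O_F$. Hence $\grf_{R'/R}=\pi^{k-k'}O_F+\pi^kO_E=\pi^{k-k'}R'$, and $\pi^{k-k'}\in F$ is a $\sigma$-invariant generator. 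This is the paper's proof of (ii); no cohomology or trace argument is needed.
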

\begin{proof}
    (i) Choose an arbitrary generator $\alpha'$ of $\grf_{R'/R}$ as above. From \eqref{eqn:5.7}, the unit $u'\in R'^\times$ determines a unique $1$-cocycle of $\langle \sigma \rangle$ into  $R'^\times$,  whose cohomology class depends only on  $\grf_{R'/R}$
    and not on the choice of the generator $\alpha'$.
    If $H^1(\langle \sigma \rangle, R'^\times)=\{1\}$, then there exists $v'\in R'^\times$ such that $u'=v'^{-1}(v')^\sigma$.  Now $\beta'\coloneqq v'^{-1}\alpha'$ is a $\sigma$-invariant  generator of $\grf_{R'/R}$.

   (ii) In this case,   every $O_F$-suborder $R\subseteq O_E$ is of the form $O_F+\grf O_E$ for some $O_F$-ideal $\grf$. Another $O_F$-order $R'=O_F+\grf'O_E$ contains $R$ if and only if $\grf'\mid \grf$, in which case the relative conductor $\grf_{R'/R}$ is given by $(\grf'^{-1}\grf) R'$.  Now any $O_F$-generator of $\grf'^{-1}\grf$ is a $\sigma$-invariant generator of $\grf_{R'/R}$. 
\end{proof}

\begin{ex}
In general there may not exist any $\sigma$-invariant generator of $\grf_{R'/R}$, as shown by the following example. Let  $E\coloneqq\Q_2(\sqrt{-1}, \sqrt{-3})$, and $\sigma\in\Gal(E/\Q_2)$ be the unique nontrivial automorphism of $E$ over the subfield $F\coloneqq \Q_2(\sqrt{-3})$. The order $R\coloneqq \Z_2+\Z_2\sqrt{-1}+\Z_2\sqrt{-3}+\Z_2(1+\sqrt{-1})(1+\sqrt{-3})/2$ is a $\sigma$-stable suborder of $O_E$  of index $2$.   If we put $L\coloneqq \Q_2(\sqrt{-1})$ and $\grp\coloneqq (1+\sqrt{-1})O_L$, then $R=O_L+\grp O_E$.  In particular, we have $(R:O_E)=\grp O_E$, which cannot have a $\sigma$-invariant generator since $E/F$ is ramified. 
\end{ex}

Keep the  generator $\alpha'$ of $\grf_{R'/R}$ fixed. Note that each $R'$-lattice $M'$ can also be viewed as an
$R$-lattice, so we have two different kinds of sesquilinear forms on $M'$, namely,
those $R'$-valued ones and the $R$-valued ones. If $\langle~,~\rangle':M'\times M'\to R'$ is an $R'$-valued form, then clearly $\alpha'\langle~,~\rangle$ takes value in $R$.
The following lemma
shows that all $R$-valued pairings on $M'$ arise in this way. 
\begin{cor}\label{cor:bij-pairing}
For each $R'$-lattice $M'$, multiplication by $\alpha'$ induces a bijection between the
following two sets of sesquilinear
forms:
\begin{equation}\label{eq:22}
\left\{\parbox{3.2cm}{$R'$-valued form \newline $\langle~,~\rangle': M'\times
    M'\to R'$}\right\}\quad \xleftrightarrow[\langle~,~\rangle'  \leftrightarrow  \alpha' \langle~,~\rangle']{1:1}\quad
  \left\{\parbox{3cm}{$R$-valued form \newline $\langle~,~\rangle: M'\times
  M'\to R$}\right\}. 
\end{equation}
Moreover, this bijection enjoys the following two properties:
\begin{enumerate}[(i)]
    \item[{\rm (i)}] $\langle~,~\rangle': M'\times
    M'\to R'$ is a perfect pairing of $R'$-lattices if and only if the
    corresponding $\alpha'\langle~,~\rangle': M'\times
    M'\to R$ is a perfect pairing of $R$-lattices; 
    \item[{\rm (ii)}] $\langle~,~\rangle'$ is $\varepsilon'$-hermitian (for some $\varepsilon'\in O_E^\times$ with $\varepsilon'(\varepsilon')^\sigma=1$) if and only if the corresponding  pairing $\alpha'\langle~,~\rangle'$ is $(u'^{-1}\varepsilon')$-hermitian, where $u'=(\alpha')^{-1}(\alpha')^\sigma\in R'^\times$ as in \eqref{eqn:5.7}. 
\end{enumerate}
In particular, if $(\alpha')^\sigma=\alpha'$, then the correspondence $\langle~,~\rangle'  \leftrightarrow  \alpha' \langle~,~\rangle'$  establishes  a bijection between hermitian forms. 
\end{cor}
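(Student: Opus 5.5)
The bijection itself comes from Lemma~\ref{lem:rel_cond}. Given an $R'$-valued sesquilinear form $\langle~,~\rangle'$ on $M'$, its values lie in $R'$, so $\alpha'\langle~,~\rangle'$ takes values in $\alpha'R'=\grf_{R'/R}\subseteq R$. Conversely, let $\langle~,~\rangle$ be an $R$-valued form on the $R'$-lattice $M'$. For fixed $x,y$, the set $\langle R'x, y\rangle=R'\langle x,y\rangle$ lies in $R$, because $R'x\subseteq M'$ and $\langle~,~\rangle$ is $R$-valued and $R$-linear in the first variable. Here I would first check that $\langle~,~\rangle$ is automatically $R'$-sesquilinear. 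The form extends $E$-sesquilinearly to $M'\otimes_Z Q$, since it is $R$-linear and $R\otimes Q=E$, so $R'$-linearity is automatic. Hence $\langle x,y\rangle\in (R:R')=\grf_{R'/R}=\alpha'R'$, and $\langle~,~\rangle'\coloneqq \alpha'^{-1}\langle~,~\rangle$ is $R'$-valued. The two maps are mutually inverse, and sesquilinearity is clearly preserved in both directions.

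For (i), I would compare the two adjoint maps. Let $\Phi'\colon M'\to \Hom_{R'}(M',R')$ and $\Phi\colon M'\to\Hom_R(M',R)$ be attached to $\langle~,~\rangle'$ and $\alpha'\langle~,~\rangle'$ respectively. Every $R$-linear map $M'\to R$ is $R'$-linear after tensoring with $Q$, and its image $f(M')$ is an $R'$-submodule of $E$ lying in $R$, hence inside $\grf_{R'/R}$. This gives an identification $\Hom_R(M',R)=\Hom_{R'}(M',\grf_{R'/R})$. Composing with the isomorphism $\grf_{R'/R}=\alpha'R'\xrightarrow{\alpha'^{-1}}R'$ gives an isomorphism $\Hom_R(M',R)\simeq \Hom_{R'}(M',R')$, $f\mapsto \alpha'^{-1}f$. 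Under it, $\Phi$ corresponds exactly to $\Phi'$. So one map is bijective if and only if the other is.

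For (ii), I would compute directly. Suppose $\langle y,x\rangle'=\varepsilon'(\langle x,y\rangle')^\sigma$. Then
\[\alpha'\langle y,x\rangle'=\alpha'\varepsilon'(\langle x,y\rangle')^\sigma=\varepsilon'\alpha'(\alpha'^{\sigma})^{-1}(\alpha'\langle x,y\rangle')^\sigma=u'^{-1}\varepsilon'(\alpha'\langle x,y\rangle')^\sigma,\]
using $(\alpha')^\sigma=u'\alpha'$ from \eqref{eqn:5.7}. The converse follows by reversing the same steps. The unit $u'^{-1}\varepsilon'$ again has norm one, since $u'u'^\sigma=1$, and it lies in $O_E^\times$ because $R'^\times\subseteq O_E^\times$. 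The final statement is the case $u'=1$.

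The only step that needs care is the first one: showing that an $R$-valued form is automatically $R'$-compatible and lands in the conductor. Everything there rests on extending the form to $E$, together with the identification $(R:R')=\grf_{R'/R}$ and its principality from Lemma~\ref{lem:rel_cond}.
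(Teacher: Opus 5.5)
Your proof is correct and follows essentially the same route as the paper. The converse direction uses that $\grf_{R'/R}=\alpha'R'$ is the largest $R'$-ideal contained in $R$, after extending the form linearly from $R$ to $R'$. Part (i) is exactly the paper's identity $\Hom_R(M',R)=\alpha'\Hom_{R'}(M',R')$ combined with $\Phi=\alpha'\Phi'$. Part (ii) is the same direct computation using $(\alpha')^\sigma=u'\alpha'$.
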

\begin{proof}
  As explained above, multiplication by $\alpha'$ induces a injection
  from the left hand side to the right hand side. Conversely, suppose
  that $\langle~,~\rangle: M'\times M'\to R$ is an $R$-valued pairing on
  $M'$ (regarded as an $R$-lattice). For each element $x'\in M'$, the
  set $\langle M', x'\rangle$ form an $R'$-ideal contained in $R$, but
  we have seen that $\grf_{R'/R}=R'\alpha'$ is the maximal $R'$-ideal
  contained in $R$, so the pairing $\langle~,~\rangle: M'\times M'\to R$
  actually takes values in $R'\alpha'$.  Extending by ($\sigma$-)linearity from
  $R$ to $R'$, we obtain an $R'$-valued pairing
  $\alpha'^{-1}\langle~,~\rangle: M'\times M'\to R'$ on the $R'$-lattice
  $M'$. This establishes the desired bijection \eqref{eq:22}.  Property (ii) of the bijection follows from a straightforward calculation. 

  To prove property (i) of the bijection, let
  $\langle~,~\rangle': M' \times M'\to R'$ be an $R'$-valued pairing on
  the $R'$-lattice $M'$.  We write
  $\Phi': M'\to \Hom_{R'}(M', R')$ (resp.~$\Phi: M'\to \Hom_R(M', R)$)
  for the map induced by $\langle~,~\rangle'$
  (resp.~by $\langle~,~\rangle\coloneqq \alpha'\langle~,~\rangle'$) as
  defined in   \eqref{eq:18}.  By construction, $\Phi=\alpha'\Phi'$. 
  Note that $\Phi'$ determines a unique $(E,\sigma)$-linear map 
  \[ M'\otimes_Z
  Q\to \Hom_E(M'\otimes_Z Q, E), \]  
and 
  both $\Hom_{R'}(M', R')$ and
  $\Hom_R(M', R)$ can be regarded canonically as $R'$-submodules of
  $\Hom_E(M'\otimes_Z Q, E)$.  We claim that 
  \[ \Hom_R(M',
  R)=\alpha'\Hom_{R'}(M', R'), \] 
  from which it then follows that
  $\Phi'$ is a bijection if and only if $\Phi$ is a bijection. Once
  again, the
  claim is a direct consequence of the fact that $\grf_{R'/R}=R'\alpha'$ is the maximal $R'$-ideal
  contained in $R$, since we have 
  \[
    \begin{split}
      \Hom_R(M', R)&=\{\lambda\in \Hom_E(M'\otimes_ZQ, E)\mid
                     \lambda(M')\subseteq R\}\\
      &=\{\lambda\in \Hom_E(M'\otimes_ZQ, E)\mid
        \lambda(M')\subseteq \alpha' R'\}\\
      &=\alpha'\cdot \{\lambda\in \Hom_E(M'\otimes_ZQ, E)\mid
        \lambda(M')\subseteq  R'\}\\&=\alpha'\Hom_{R'}(M', R'). 
    \end{split}
  \]
This verifies the claim and completes the proof of the corollary. 
\end{proof}

\begin{cor}\label{cor:pseudo-basis-dual}
Let $M$ be an $R$-lattice whose structural invariants are given by
the ascending chain \eqref{eq:20} and the $m$-tuple
$(r_1, \cdots, r_m)\in \Z_{>0}^m$. For each $1\leq i \leq m$, fix a
generator $\alpha_i$ of $(R:R_i)$.   
  Let $\scrB_M=\{e^{(i)}_k\mid 1\leq i \leq
 m , 1\leq k\leq r_i\}$ be a pseudo-basis of $M$ as in \eqref{eq:26}, and
 $\{f_k^{(i)}\}$ be the corresponding dual $E$-basis in $\Hom_E(M\otimes
 Q, E)$. In other words, we have
\[f_k^{(i)}(e_l^{(j)})=
  \begin{cases}
    1 & \text{if } i=j \text{ and } k=l;\\
    0 & \text{otherwise}.     
  \end{cases}
\]
Then a pseudo-basis of the $R$-lattice  $\Hom_R(M,R)$ is given by
\[ \scrC_M\coloneqq \{\alpha_i
f^{(i)}_k\mid 1\leq i\leq  m, 1\leq k\leq r_i\}. \] 
\end{cor}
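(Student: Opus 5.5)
The plan is to reduce to the isotypic components and then to a rank-one computation. By \eqref{eq:19}, $M=M_1\oplus\cdots\oplus M_m$ with $M_i=\bigoplus_k R_ie^{(i)}_k$. Hence
\[
\Hom_R(M,R)=\bigoplus_{i=1}^m\Hom_R(M_i,R)=\bigoplus_{i=1}^m\bigoplus_{k=1}^{r_i}\Hom_R(R_ie^{(i)}_k,R),
\]
viewed inside $\Hom_E(M\otimes_Z Q,E)$. Here each summand $\Hom_R(R_ie^{(i)}_k,R)$ consists of those $E$-linear functionals that vanish on all other pseudo-basis vectors $e^{(j)}_l$ and send $R_ie^{(i)}_k$ into $R$. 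Such a functional has the form $x f^{(i)}_k$ with $x\in E$, and the condition becomes $xR_i\subseteq R$, i.e.\ $x\in(R:R_i)$.

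Next I will invoke Lemma~\ref{lem:rel_cond}, which gives $(R:R_i)=\grf_{R_i/R}=R_i\alpha_i$. The $k$-th summand is therefore $R_i\alpha_if^{(i)}_k$, a free $R_i$-module of rank one generated by $\alpha_if^{(i)}_k$. The only point to check here is that $x\mapsto xf^{(i)}_k$ is injective, which holds since $\alpha_i\in E^\times$.

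Assembling, $\Hom_R(M,R)=\bigoplus_i\bigoplus_k R_i\,\alpha_if^{(i)}_k$. This exhibits a decomposition into free $R_i$-modules of ranks $r_i$ with the same chain \eqref{eq:20}, so $\scrC_M$ is a pseudo-basis in the sense of \eqref{eq:26}. By the uniqueness of structural invariants, this also shows $\Hom_R(M,R)\simeq M$.

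I expect no real obstacle. The one care point is the identification of $\Hom_R(M,R)$ with a sublattice of the $E$-dual, including the fact that the $R_i$-module structure on the dual (via $E$) matches the one used in the definition of a pseudo-basis. This works because each $M_i$ spans an $E$-submodule and $R$-linear maps extend uniquely $E$-linearly after tensoring with $Q$.
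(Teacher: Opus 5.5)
Your proposal is correct and follows essentially the same route as the paper: split $\Hom_R(M,R)$ along the pseudo-basis as $\bigoplus_{i,k}\Hom_R(R_ie^{(i)}_k,R)$, identify each summand with $(R:R_i)f^{(i)}_k$, and use Lemma~\ref{lem:rel_cond} to rewrite $(R:R_i)$ as $R_i\alpha_i$. One small point: the injectivity you really need for freeness is that $r\mapsto r\alpha_i f^{(i)}_k$ is injective on $R_i$, which holds because $\alpha_i\in E^\times$ and $f^{(i)}_k(e^{(i)}_k)=1$.
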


\begin{proof}
  By the definition of pseudo-basis, we have $M=\oplus_{i=1}^m\oplus_{k=1}^{r_i}
  R_ie_k^{(i)}$.  Thus
  \[
    \begin{split}
    \Hom_R(M,R)&=\oplus_{i=1}^m\oplus_{k=1}^{r_i}
    \Hom_R(R_ie_k^{(i)},
                  R)\\&=\oplus_{i=1}^m\oplus_{k=1}^{r_i}(R:R_i)f_k^{(i)}=
      \oplus_{i=1}^m\oplus_{k=1}^{r_i} R_i\alpha_if_k^{(i)}.
    \end{split}
  \]
  This clearly shows that $\scrC_M=\{\alpha_if^{(i)}_k\}$ form a
  pseudo-basis of $\Hom_R(M,R)$. 
\end{proof}

Keep the notation and assumption of
Corollary~\ref{cor:pseudo-basis-dual}, and fix a sesquilinear form $\langle~,~\rangle: M\times
M\to R$ on $M$ for the rest of this section. 
 We form its Gram matrix $A$ with respect to the pseudo-basis
$\scrB_M$ as follows: $A=[A_{ij}]_{1\leq i, j\leq m}$ is defined to be
a block
matrix, where each $A_{ij}=[a_{kl}^{(ij)}]$
is an $(r_i\times r_j)$-matrix with $(k, l)$-entry
$a^{(ij)}_{kl}\coloneqq \langle e^{(i)}_k , e^{(j)}_l  \rangle$. As both
$\langle R_ie^{(i)}_k , e^{(j)}_l  \rangle$ and $\langle e^{(i)}_k ,
R_je^{(j)}_l  \rangle$ are contained in $R$, we find that
\begin{equation}
  \label{eq:24}
  a_{kl}^{(ij)}\coloneqq \langle e^{(i)}_k , e^{(j)}_l  \rangle\in
 R_i\alpha_i\cap R_j\alpha_j= R_t\alpha_t \qquad \text{with}\quad t\coloneqq \max\{i, j\}. 
\end{equation}
Indeed, if $i\leq j$, then $R_i\subseteq R_j$, and hence $  R_i\alpha_i=(R: R_i)\supseteq (R: R_j)=R_j\alpha_j$.
Now a direct calculation yields the following lemma. 
\begin{lem}\label{lem:exp-matrix}
  Let $\Phi: M\to Hom_R(M, R)$ be the map induced by
  $\langle~,~\rangle: M\times M\to R$ as defined in \eqref{eq:18}. Let
  $\scrB_M=\{e_k^{(i)}\}$ and $\scrC_M=\{\alpha_if_k^{(i)}\}$ be the
  pseudo-bases of $M$ and $\Hom_R(M, R)$ respectively given by
  Corollary~\ref{cor:pseudo-basis-dual}. Then the matrix
  of $\Phi$ with respective to these two bases is given by
  $B=[\alpha_i^{-1}A_{ij}]_{1\leq i,j\leq m}$,
  where $A=[A_{ij}]_{1\leq i, j\leq m}$ is the Gram matrix of the
  sesquilinear form   $\langle~,~\rangle$ with respect to $\scrB_M$
  defined  above.
\end{lem}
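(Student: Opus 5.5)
The plan is to compute $\Phi$ directly on each basis vector $e^{(j)}_l$ of the pseudo-basis $\scrB_M$. By definition, $\Phi(e^{(j)}_l)=\langle\cdot,e^{(j)}_l\rangle$. This is an $R$-linear functional on $M$, and it extends uniquely to an $E$-linear functional on $M\otimes_Z Q$. Since $\{f^{(i)}_k\}$ is the $E$-basis dual to $\scrB_M$, we can expand
\[
\Phi(e^{(j)}_l)=\sum_{i,k}\langle e^{(i)}_k,e^{(j)}_l\rangle\, f^{(i)}_k=\sum_{i,k}a^{(ij)}_{kl} f^{(i)}_k .
\]
To check this, evaluate both sides at each $e^{(i)}_k$. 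The two sides then agree on an $E$-basis, and both are $E$-linear in the argument, so they are equal.

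Next I rewrite this expansion in terms of $\scrC_M$. Writing $f^{(i)}_k=\alpha_i^{-1}\cdot(\alpha_i f^{(i)}_k)$ gives
\[
\Phi(e^{(j)}_l)=\sum_{i,k}\bigl(\alpha_i^{-1}a^{(ij)}_{kl}\bigr)\,\alpha_i f^{(i)}_k .
\]
So the coordinates of $\Phi(e^{(j)}_l)$ with respect to $\scrC_M$ form the $(j,l)$-th column of $B=[\alpha_i^{-1}A_{ij}]$. This is exactly the claimed matrix, with the convention that columns record images of basis vectors.

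The only point that needs care is that these coefficients are legitimate entries for a pseudo-basis. The $i$-th block of $\scrC_M$ is a free $R_i$-module, so each coefficient $\alpha_i^{-1}a^{(ij)}_{kl}$ should lie in $R_i$. This follows from \eqref{eq:24}: there $a^{(ij)}_{kl}\in R_t\alpha_t\subseteq R_i\alpha_i$, using $t\ge i$ and $(R:R_t)\subseteq(R:R_i)$. Hence $\alpha_i^{-1}a^{(ij)}_{kl}\in R_i$, and the expansion is a genuine expression in the pseudo-basis $\scrC_M$ of Corollary~\ref{cor:pseudo-basis-dual}.

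One more remark concerns semilinearity. $\Phi$ is $(R,\sigma)$-linear, so $B$ describes $\Phi$ on the basis vectors, and general elements transform via $\sigma$ on their coefficients. This convention should be stated explicitly. No real obstacle is expected, since the whole argument is bookkeeping.
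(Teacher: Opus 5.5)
Your proposal is correct and is the same direct calculation the paper invokes without writing out: expanding $\Phi(e^{(j)}_l)=\sum_{i,k}a^{(ij)}_{kl}f^{(i)}_k$ in the dual basis and rescaling by $\alpha_i$ gives the $(i,j)$-block $\alpha_i^{-1}A_{ij}$. Your extra checks (that $\alpha_i^{-1}a^{(ij)}_{kl}\in R_i$ by \eqref{eq:24}, and the column convention under $\sigma$-semilinearity) agree with how $B$ is used afterwards.
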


For simplicity, we put
$n\coloneqq \sum_{i=1}^m r_i=\rk_E(M\otimes_Z Q)$.  Let
$\Lambda_0\coloneqq\oplus_{i=1}^m R_i^{r_i}$ be the $R$-lattice in
$E^n$ with the same structural invariants as $M$. We  regard the matrix $B$ as an element of
$\End_R(\Lambda_0)$, which is an $R$-suborder of $\Mat_n(O_E)$. By
definition, the sesquilinear form $\langle~,~\rangle: M\times M\to R$ is perfect
if and only if $B\in \Aut(\Lambda_0)$. On the other hand, since $O_E$
is a finite $R$-algebra, a similar proof as the one for
\cite[Proposition~2.6]{xue-yang-yu:ECNF} shows that
\begin{equation}
  \label{eq:27}
  \Aut(\Lambda_0)=\End_R(\Lambda_0)\cap \GL_n(O_E).  
\end{equation}
Let $\grJ(O_E)$ denote the Jacobson radical of $O_E$, and $\bar{B}\in
\Mat_n(O_E/\grJ(O_E))$ be the image of $B$ modulo
$\grJ(O_E)$. We have the following equivalent characterizations
about invertibility of $B$: 
\begin{equation}
  \label{eq:28}
  B\in \Aut(\Lambda_0)\quad \iff \quad B\in
  \GL_n(O_E) \quad \iff \quad \bar{B}\in
  \GL_n(O_E/\grJ(O_E)).   
\end{equation}
 Note that for each pair $(i,j)$ with
$1\leq i < j \leq m$, the $(i,j)$-block $B_{ij}$ of $B$ defines an
element of $\Hom_R(R_j^{r_j}, R_i^{r_i})$, so every entry of $B_{ij}$
lies in $(R_i: R_j)$. This is compatible with our previous
description of entries of $B=[\alpha_i^{-1}A_{ij}]_{1\leq i,j\leq m}$ given in \eqref{eq:24} since 
\begin{equation}
  \label{eq:25}
  \begin{split}
  (R_i: R_j)&=\alpha_i^{-1}((R: R_i): R_j)=\alpha_i^{-1}(R:
  R_iR_j)\\&=\alpha_i^{-1}(R: R_j)=\alpha_i^{-1}\alpha_jR_j.     
  \end{split}
\end{equation}
\begin{lem}\label{lem:upper-block-radical}
We have $(R_i: R_j)\subseteq \grJ(O_E)$ for every pair $(i,j)$ with
$1\leq i < j \leq m$. In particular, $B_{ij}\subseteq
\Mat_{r_i\times r_j}(\grJ(O_E))$ for every such pair $(i, j)$. 
\end{lem}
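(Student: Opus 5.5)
By \eqref{eq:25} we have $(R_i:R_j)=\alpha_i^{-1}\alpha_jR_j$, which is an $R_j$-ideal because $R_iR_j=R_j$. The plan is to show that this $R_j$-ideal is not all of $R_j$ and lies inside $O_E$, and then deduce that it sits in $\grJ(O_E)$. The second claim of the lemma then follows at once: every entry of $B_{ij}$ lies in $(R_i:R_j)$, as noted just before the statement.

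\emph{Step 1: $(R_i:R_j)\subseteq \grJ(R_j)$.} By definition $(R_i:R_j)=\{x\in E\mid xR_j\subseteq R_i\}$, so it is an $R_j$-ideal contained in $R_i$. If it contained a unit of $R_j$, then multiplying by the inverse unit would give $R_j\subseteq R_i$, contradicting $R_i\subsetneq R_j$. Hence it is a proper ideal of $R_j$ and lies in the union of the maximal ideals of $R_j$. To get containment in the Jacobson radical itself, I split into two cases. If $R_j$ is local, the union of its maximal ideals is exactly $\grJ(R_j)$, and we are done. If $R_j$ is not local, then by the listed Drozd--Kirichenko--Roiter properties we have $j=m$ and $R_m=O_{E_1}\times O_{E_2}$. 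In that case $(R_i:R_j)=\grf_{R_j/R_i}$ is an $O_E$-ideal of the form $\gra_1\times\gra_2$, and I would argue that neither factor can be all of $O_{E_k}$. Suppose $\gra_1=O_{E_1}$. Then $R_i\supseteq O_{E_1}\times \gra_2$, so $R_i$ contains the idempotent $(1,0)$ and therefore splits as a product of two orders. A split order is not local, which contradicts $R_i$ being local for $i\le m-1$ (and $i<j=m$ here). The same argument rules out $\gra_2=O_{E_2}$. Hence $(R_i:R_j)\subseteq \grJ(O_{E_1})\times\grJ(O_{E_2})=\grJ(O_E)$.

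\emph{Step 2: pass from $\grJ(R_j)$ to $\grJ(O_E)$ when $R_j$ is local.} Here $O_E$ is finite (integral) over $R_j$. By lying over and going up, every maximal ideal of $O_E$ contracts to the maximal ideal $\grJ(R_j)$ of $R_j$. Therefore $\grJ(R_j)\subseteq \grm$ for every maximal ideal $\grm$ of $O_E$, so $\grJ(R_j)\subseteq\grJ(O_E)$. Combining this with Step 1 gives $(R_i:R_j)\subseteq\grJ(O_E)$ in both cases.

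The main obstacle is the non-local case $j=m$ in Step 1, where a proper ideal need not lie in the radical. The argument there rests on the locality of $R_i$ for $i<m$ together with the subdirect-sum structure. I expect the idempotent argument to settle it, since a local ring has no nontrivial idempotents.
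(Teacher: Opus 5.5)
Your proof is correct and follows essentially the same route as the paper. Both arguments first note that $(R_i:R_j)$ is a proper ideal of $R_j$. In the local case both then use the inclusion $\grJ(R_j)\subseteq\grJ(O_E)$: the paper cites Atiyah--Macdonald, Exercise~5.5(ii), while you derive it directly via going-up. When $R_j=R_m=O_{E_1}\times O_{E_2}$, both use the same idempotent argument, which contradicts the locality of $R_i$ for $i<m$.
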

\begin{proof}
First note that   $(R_i: R_j)$ is a proper ideal of
$R_j$ since $R_i\subsetneq R_j$. 
  From \cite[Exercise~5.5.ii)]{Atiyah-Mac},  the
Jacobson radical $\grJ(R_j)$ coincides with $\grJ(O_E)\cap
R_j$. If $R_j$ is local, then
$\grJ(R_j)$ is just the unique maximal ideal of $R_j$. In particular,
if $R_j$ is local, then $(R_i: R_j)\subseteq \grJ(R_j)\subseteq
\grJ(O_E)$. 

For the remaining proof assume that $R_j$ is not local. This 
happens only when $j=m$, $E=E_1\times E_2$ and $R_m=O_E=O_{E_1}\times
O_{E_2}$. The ideals of $O_E$ not contained in $\grJ(O_E)$ are of
the form $O_{E_1}\times \gra_2$ or $\gra_1\times O_{E_2}$, where
$\gra_1$ and $\gra_2$ are 
ideals of $O_{E_1}$  and $O_{E_2}$ respectively. On the other
hand, $(R_i: R_j)$ is also an ideal of $R_i$, so if $(R_i:
R_j)\not\subseteq \grJ(O_E)$, then $R_i$ necessarily contains a
nontrivial idempotent, contradicting to the fact that $R_i$ is
always local. In conclusion, we have shown that $(R_i:
R_j)\subseteq \grJ(O_E)$ whenever $1\leq i<j\leq m$. 
\end{proof}

\begin{prop}\label{prop:restr-unimodular}
  Let $M=M_1\oplus \cdots \oplus M_m$ be a decomposition of an $R$-lattice
  into isotypic components as in \eqref{eq:19}, 
  and
  $\langle~,~\rangle_i:M_i\times M_i\to R$ be  the restriction of the
  given pairing $\langle~,~\rangle$ to $M_i$. 
  The following are
  equivalent:
  \begin{enumerate}[(i)]
  \item[{\rm (i)}] $\langle~,~\rangle: M\times M \to R$ is a perfect pairing of
    $R$-lattices;
    \item[{\rm (ii)}] $\langle~,~\rangle_i: M_i\times M_i \to R$ is a perfect pairing of
      $R$-lattices for every $1\leq i\leq m$;
     \item[{\rm (iii)}]  $\alpha_i^{-1}\langle~,~\rangle_i: M_i\times M_i \to R_i$ is a perfect pairing of
      $R_i$-lattices for every $1\leq i\leq m$. 
  \end{enumerate}
\end{prop}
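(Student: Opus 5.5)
The strategy is to work entirely with the matrix $B=[\alpha_i^{-1}A_{ij}]$ of Lemma~\ref{lem:exp-matrix} and the invertibility criterion \eqref{eq:28}. By Lemma~\ref{lem:upper-block-radical}, every block $B_{ij}$ with $i<j$ has entries in $\grJ(O_E)$. Hence the reduction $\bar B$ modulo $\grJ(O_E)$ is block lower triangular. Its determinant is therefore the product of the determinants of its diagonal blocks $\bar B_{ii}$. It follows that $B\in\GL_n(O_E)$ if and only if each $\bar B_{ii}\in\GL_{r_i}(O_E/\grJ(O_E))$, which in turn holds if and only if each $B_{ii}\in \GL_{r_i}(O_E)$. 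This is the core of (i)$\Leftrightarrow$(ii).

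To close the loop, the diagonal blocks must be identified with the restricted pairings. The sublattice $M_i$ is itself an $R$-lattice with the single structural invariant $R_i$ and multiplicity $r_i$. Its pseudo-basis is $\{e^{(i)}_k\}_k$, and the Gram matrix of $\langle~,~\rangle_i$ is exactly $A_{ii}$. Applying Lemma~\ref{lem:exp-matrix} to $M_i$ (with $m=1$), the matrix of $\Phi_i: M_i\to\Hom_R(M_i,R)$ is $\alpha_i^{-1}A_{ii}=B_{ii}$. Applying \eqref{eq:28} to the lattice $\Lambda_0=R_i^{r_i}$, we get that $\langle~,~\rangle_i$ is perfect if and only if $B_{ii}\in\GL_{r_i}(O_E)$. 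Combined with the first paragraph, this gives (i)$\Leftrightarrow$(ii). One point to check is that \eqref{eq:27} and \eqref{eq:28} are stated for general structural invariants, so they apply verbatim to $M_i$. I expect this to be routine, since \eqref{eq:27} only uses that $O_E$ is finite over $R$.

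The equivalence (ii)$\Leftrightarrow$(iii) is exactly Corollary~\ref{cor:bij-pairing}(i), applied with $R'=R_i$, $\alpha'=\alpha_i$ and $M'=M_i$. The $R$-valued form $\langle~,~\rangle_i$ corresponds to the $R_i$-valued form $\alpha_i^{-1}\langle~,~\rangle_i$, and perfection is preserved under this correspondence.

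The main obstacle is the determinant argument over $O_E/\grJ(O_E)$. This ring may be a product of two fields, $k_1\times k_2$, rather than a field. I would handle this componentwise: $\GL_n$ of a product is the product of the $\GL_n$'s. In each factor field, a block lower triangular matrix is invertible if and only if its diagonal blocks are invertible. This avoids any subtlety about determinants over non-domains. The only other point to be careful about is the direction of triangularity. Blocks with $i<j$ lie in the radical, so $\bar B$ is lower triangular. This holds regardless of whether $\langle~,~\rangle$ is sesquilinear rather than hermitian, because no symmetry of $A$ is used.
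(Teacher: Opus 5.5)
Your proposal is correct and follows essentially the same route as the paper: reduce $B$ modulo $\grJ(O_E)$, note via Lemma~\ref{lem:upper-block-radical} that the reduction is block lower triangular, and use \eqref{eq:28} to reduce invertibility of $B$ to invertibility of the diagonal blocks; (ii)$\Leftrightarrow$(iii) then comes from Corollary~\ref{cor:bij-pairing}. The only cosmetic difference is the link to the diagonal blocks: the paper connects them directly to (iii), reading $B_{ii}$ as the Gram matrix of $\alpha_i^{-1}\langle~,~\rangle_i$ and using $B_{ii}\in\GL_{r_i}(O_E)\iff B_{ii}\in\GL_{r_i}(R_i)$, whereas you connect them to (ii) by applying Lemma~\ref{lem:exp-matrix} and \eqref{eq:28} to $M_i$ alone.
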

\begin{proof}
  The equivalence of (ii) and (iii) follows from
  Corollary~\ref{cor:bij-pairing},  so we focus on proving the
  equivalence between (i) and (iii).  Recall that the
  pseudo-basis $\scrB_M=\{e^{(i)}_k\mid 1\leq i \leq
 m , 1\leq k\leq r_i\}$ is constructed by picking an $R_i$-basis $\{e_1^{(i)},
\cdots, e_{r_i}^{(i)}\}$  for each $M_i$ and collecting them together. Let $B=[B_{ij}]_{1\leq i,
  j\leq m}$ be the matrix of $\Phi: M\to \Hom_R(M, R)$ with
respect to the pseudo-bases $\scrB_M$ and $\scrC_M$ as above. Then
each $B_{ij}=\alpha_i^{-1}[\langle e^{(i)}_k,
e^{(j)}_l\rangle]_{\substack{\tiny 1\leq
k\leq r_i\\\tiny 1\leq l\leq r_j}}$. In particular, $B_{ii}$ coincides
with the Gram matrix of the  pairing
$\alpha_i^{-1}\langle~,~\rangle_i: M_i\times M_i \to R_i$ with respect
to the $R_i$-basis $\{e_1^{(i)},
\cdots, e_{r_i}^{(i)}\}$ of $M_i$.  We have seen in \eqref{eq:28} that
$\langle~,~\rangle: M\times
M \to R$ is a perfect pairing if and only if $B\in \GL_n(O_E)$, while
by definition each $\alpha_i^{-1}\langle~,~\rangle_i: M_i\times
M_i \to R_i$ is a perfect pairing if and only if $B_{ii}\in
\GL_{r_i}(R_i)$ for each $1\leq i\leq m$. Thus the proof of the proposition is reduced to showing that
$B\in \GL_n(O_E)$ if and only if each $B_{ii}\in \GL_{r_i}(R_i)$ for
every $1\leq i\leq m$.

On the other hand, we have seen in
Lemma~\ref{lem:upper-block-radical} that $B_{ij}\subseteq
\Mat_{r_i\times r_j}(\grJ(O_E))$ for every pair $(i, j)$ with $1\leq
i<j\leq m$. Reducing $B$ modulo $\grJ(O_E)$, we obtain a block
lower triangular matrix $\bar{B}\in \Mat_n(O_E/\grJ(O_E))$, whose diagonal blocks $\bar{B}_{ii}\in
\Mat_{r_i}(O_E/\grJ(O_E))$  are
just  the images of the $B_{ii}$'s modulo $\grJ(O_E)$. Therefore we have the following equivalences
\[
  \begin{split}
 B\in \Aut(\Lambda_0)\quad & \iff \quad  B\in \GL_n(O_E)\quad \iff \quad \bar{B}\in
  \GL_n(O_E/\grJ(O_E))\\ & \iff \quad  \bar{B}_{ii}\in
                           \GL_{r_i}(O_E/\grJ(O_E)),  \quad \forall\, 1\leq i \leq m;\\
                           &\iff \quad  B_{ii}\in
                             \GL_{r_i}(O_E), \quad \forall\, 1\leq i \leq m;\\
                             &\iff \quad B_{ii}\in
                             \GL_{r_i}(R_i), \quad\forall\, 1\leq i \leq m.
  \end{split}
\]
This finishes the proof of equivalence between (i) and (iii). 
\end{proof}

\begin{lem}\label{lem:orthog-decomp}
  Let  $N\subseteq M$ be an $R$-sublattice, and  $\langle~,~\rangle_N: N\times N\to R$ 
  be the restriction of the sesquilinear form $\langle~,~\rangle: M\times M \to
  R$ to $N$. Suppose that $\langle~,~\rangle_N$ is a perfect pairing
  on $N$. Then  there is an orthogonal decomposition
  \begin{equation}
    \label{eq:15}
  M=N\oplus N^\perp,     
  \end{equation}
  where $N^\perp\coloneqq \{v\in M\mid \langle N, v\rangle=0\}$ is the right orthogonal complement of $N$. 
  Moreover,  $(M, \langle~,~\rangle)$ is unimodular if and only if $(N^\perp,
  \langle~,~\rangle_{N^\perp})$ is unimodular. 
\end{lem}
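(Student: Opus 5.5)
The plan is the standard projection argument. Since $\langle~,~\rangle_N$ is perfect, the induced map $\Phi_N: N\to \Hom_R(N,R)$, $y\mapsto \langle\cdot,y\rangle$, is bijective. Note also that the pairing is linear in the first variable, so for each $v\in M$ the functional $\lambda_v\coloneqq \langle \cdot , v\rangle|_N$ lies in $\Hom_R(N,R)$. I would not use left orthogonality anywhere, because for a general sesquilinear form left and right orthogonals differ; the definition of $N^\perp$ as the \emph{right} orthogonal matches the choice of variable in $\Phi$.

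First I construct the projection. For $v\in M$ there is a unique $y=\pi(v)\in N$ with $\langle x, y\rangle=\langle x,v\rangle$ for all $x\in N$, namely $y=\Phi_N^{-1}(\lambda_v)$. Then $v-\pi(v)\in N^\perp$, so $M=N+N^\perp$. For $v\in N\cap N^\perp$ we get $\Phi_N(v)=0$, hence $v=0$ by injectivity. This gives $M=N\oplus N^\perp$ as $R$-modules, and $\langle N,N^\perp\rangle=0$ holds by definition.

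Next comes the unimodularity statement. Relative to the decomposition $M=N\oplus N^\perp$, we have $\Hom_R(M,R)=\Hom_R(N,R)\oplus\Hom_R(N^\perp,R)$. In this decomposition $\Phi_M$ has block form
\[\Phi_M=\begin{bmatrix}\Phi_N & *\\ 0 & \Phi_{N^\perp}\end{bmatrix}.\]
To see this, write $v=y+z$ with $y\in N$ and $z\in N^\perp$. The $N$-component of $\Phi_M(v)$ is $\langle\cdot,y\rangle|_N+\langle \cdot, z\rangle|_N=\Phi_N(y)$, since $\langle N,z\rangle=0$. The $N^\perp$-component is $\langle\cdot,y\rangle|_{N^\perp}+\Phi_{N^\perp}(z)$. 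So $\Phi_M$ is block triangular with respect to this decomposition, and each map is $(R,\sigma)$-semilinear. Since $\Phi_N$ is bijective, a direct check shows that $\Phi_M$ is bijective if and only if $\Phi_{N^\perp}$ is bijective. For surjectivity: given $(f,g)$, solve for $z$ from $g$ after correcting by $y$, and there is a unique solution since $y$ is forced by $f$ via $\Phi_N^{-1}$. Injectivity follows the same way.

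The main point needing care is keeping track of which variable is used. $\Phi$ is defined via $\langle\cdot,x\rangle$, which is what forces the use of the right orthogonal and makes the off-diagonal block vanish on the correct side. Beyond that, the argument is purely formal, requires no Bass hypothesis, and works for arbitrary $R$-lattices, free or not.
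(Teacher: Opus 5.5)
Your argument is correct. It is exactly the standard projection proof of \cite[Proposition~I.3.2]{Baeza-QF-semilocal} and \cite[Proposition~I.2]{Knebusch-1977}, which the paper only cites (``the same proofs apply here''); you have written it out in the sesquilinear, not-necessarily-free setting.

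One cosmetic point: with the usual convention, where columns are indexed by the source summands $N, N^\perp$, the vanishing block is the component $N^\perp\to\Hom_R(N,R)$. So $\Phi_M$ is block \emph{lower} triangular, which is what your componentwise computation actually shows, and the conclusion is unaffected.
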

\begin{proof}
  This is a slight generalization of
  \cite[Proposition~I.3.2]{Baeza-QF-semilocal} or
  \cite[Proposition~I.2]{Knebusch-1977}, and the same proofs apply here. 
\end{proof}


Now we are ready to state the main theorem of this section. For the
reader's convenience, we also recall  briefly the basic setup of this
section. 

\begin{thm}\label{thm:orthogonal-decomp}
Let $R$ be a local Bass order, and $M$ be an $R$-lattice whose structural invariants are given by
the ascending chain $R_1\subsetneq R_2\subsetneq \cdots \subsetneq
R_m$ of overorders of $R$ and the $m$-tuple
$(r_1, \cdots, r_m)\in \Z_{>0}^m$. For each $1\leq i \leq m$, fix a
generator $\alpha_i$ of $(R:R_i)$.   
  Suppose that $\langle~,~\rangle:M\times M\to R$ is a perfect
  sesquilinear form
  on the $R$-lattice $M$. Then there exists an orthogonal decomposition of
  $M$ into isotypic components
  \begin{equation}
    \label{eq:29}
  M=M_1\oplus \cdots\oplus M_m
  \end{equation}
  such that $\langle M_i, M_j\rangle=0$ for all $1\leq i<j\leq m$. 
  In particular, each $M_i$ is a free $R_i$-lattice of rank $r_i$, and  the restriction $\langle~,~\rangle_{M_i}: M_i\times
M_i\to R$ of $\langle~,~\rangle$ to $M_i$ is of the form
$\alpha_i\langle~,~\rangle_i$, where $\langle~,~\rangle_i: M_i\times
M_i\to R_i$ is a  perfect pairing of $R_i$-lattices for each $1\leq
i\leq m$.

\end{thm}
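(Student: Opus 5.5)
We argue by induction on $m$, peeling off the isotypic component attached to the smallest order $R_1$ and using Lemma~\ref{lem:orthog-decomp} to split it off orthogonally. Fix any decomposition $M=M_1'\oplus\cdots\oplus M_m'$ into isotypic components as in \eqref{eq:19}. By Proposition~\ref{prop:restr-unimodular}, since $\langle~,~\rangle$ is perfect on $M$, its restriction to $M_1'$ is already a perfect $R$-valued pairing, and $\alpha_1^{-1}\langle~,~\rangle|_{M_1'}$ is a perfect $R_1$-valued pairing. Set $M_1\coloneqq M_1'$. Lemma~\ref{lem:orthog-decomp} then gives $M=M_1\oplus M_1^\perp$ with $\langle M_1,M_1^\perp\rangle=0$, and $M_1^\perp$ is unimodular.

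Next we identify the structural invariants of $M_1^\perp$. Since $M_1^\perp\simeq M/M_1\simeq M_2'\oplus\cdots\oplus M_m'$ as $R$-modules (the projection along $M_1$ restricts to an isomorphism), the uniqueness of structural invariants in the Borevich--Faddeev theorem shows that $M_1^\perp$ has invariants $R_2\subsetneq\cdots\subsetneq R_m$ and $(r_2,\dots,r_m)$. The same $R$ is still the base local Bass order, and the generators $\alpha_i$ of $(R:R_i)$ are unchanged. The induction hypothesis, which is trivial for $m=1$ where $M=M_1$ is free over $R_1$ and Corollary~\ref{cor:bij-pairing} gives $\langle~,~\rangle=\alpha_1\langle~,~\rangle_1$ with $\langle~,~\rangle_1$ perfect, now yields $M_1^\perp=M_2\oplus\cdots\oplus M_m$ with $\langle M_i,M_j\rangle=0$ for $2\le i<j$. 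Combined with $\langle M_1,M_j\rangle=0$, this gives the required decomposition with $\langle M_i,M_j\rangle=0$ for all $i<j$. The statements about each restriction being $\alpha_i$ times a perfect $R_i$-valued form then follow from Proposition~\ref{prop:restr-unimodular}, using (i)$\Rightarrow$(iii) applied to the final decomposition.

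The main obstacle is the application of Lemma~\ref{lem:orthog-decomp} in the sesquilinear (non-symmetric) setting. We only get $\langle M_1, M_1^\perp\rangle=0$ for the right orthogonal complement, which is why the conclusion only asserts vanishing for $i<j$. Peeling off components in increasing order of $i$ is exactly what produces $\langle M_i,M_j\rangle=0$ for $i<j$.

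We must check that the proof of $M=N\oplus N^\perp$ goes through for non-free $N$. Given $v\in M$, the functional $\langle -,v\rangle|_N\in\Hom_R(N,R)$ is realized by some $n\in N$, by perfectness of the restriction via the map $\Phi$ of \eqref{eq:18}. Hence $v-n\in N^\perp$, while $N\cap N^\perp=0$ by nondegeneracy. The argument therefore needs only that $\Phi_N$ is bijective, not that $N$ is free.
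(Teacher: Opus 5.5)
Your proof is correct and follows the paper's argument. Both proceed by induction on $m$: split off the first isotypic component using Proposition~\ref{prop:restr-unimodular} and Lemma~\ref{lem:orthog-decomp}, then apply the induction hypothesis to the right orthogonal complement $M_1^\perp$. The differences are cosmetic: the paper first rescales by $\alpha_1^{-1}$ to assume $R_1=R$, which is unnecessary since Proposition~\ref{prop:restr-unimodular} handles a general chain, and you additionally spell out why $M_1^\perp$ has invariants $(R_2,\dots,R_m)$ and why the splitting needs only bijectivity of $\Phi_N$.
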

\begin{proof}[Proof by induction on $m$]
  We focus on proving the existence of the orthogonal
  decomposition  \eqref{eq:29}, as the last statement of the theorem
  follows directly from    Corollary~\ref{cor:bij-pairing}  and Proposition~\ref{prop:restr-unimodular}.
   If $m=1$, then there is no decomposition to be made.

Now suppose that such an orthogonal decomposition exists for any
unimodular $R$-lattice with $m-1$
  isotypic components. Let $(M, \langle~,~\rangle)$ be a unimodular
  $R$-lattice with $m\geq 2$ isotypic
  components. Replacing the pairing
  by $\alpha_1^{-1}\langle~,~\rangle$ if necessary, we assume that
  $R_1=R$. Let $M=N_1\oplus \cdots \oplus N_m$ be an arbitrary
  decomposition of $M$ into isotypic components and put
  $M_1\coloneqq N_1$, which is a free $R$-lattice of rank $r_1$. Since
  $\langle~,~\rangle: M\times M\to R$ is a perfect pairing, 
the
  restriction $\langle~,~\rangle_{M_1}: M_1\times M_1\to R$ is again perfect
 by  Proposition~\ref{prop:restr-unimodular}. Now Lemma~\ref{lem:orthog-decomp} allows us to decompose
  $M$ as the  direct sum of $M_1$ and   $M_1^\perp$, and the restriction
  $\langle~,~\rangle_{M_1^\perp}: M_1^\perp\times M_1^\perp\to R$ is
  again perfect.  Since $M_1^\perp$ is an $R$-lattice with $m-1$
  isotypic components, we  apply the induction hypothesis to
  $(M_1^\perp, \langle~,~\rangle_{M_1^\perp})$ to obtain an orthogonal
  decomposition of $M_1^\perp$ into isotypic components
  $M_1^\perp=M_2\oplus \cdots \oplus M_m$ such that $\langle M_i, M_j\rangle=0$ for all $2\leq i<j\leq m$.  Putting this together with
  the original decomposition $M=M_1\oplus M_1^\perp$, we obtain the
  desired orthogonal decomposition of $M$. 
\end{proof}

Suppose further that $\langle~,~\rangle$ is $\varepsilon$-hermitian for some $\varepsilon\in O_E^\times$ with $\varepsilon\varepsilon^\sigma=1$. Then Theorem~\ref{thm:orthogonal-decomp} produces an orthogonal decomposition $M=M_1\perp\cdots \perp M_m$ in the classical sense, namely, $M=M_1\oplus\cdots \oplus M_m$ with $\langle M_i, M_j\rangle=0$ for all $i\neq j$. From the proof of Theorem~\ref{thm:orthogonal-decomp}, it is clear
that such a  decomposition is not unique. A natural
question to ask is that given two orthogonal decompositions 
\[ M=M_1\perp \cdots\perp M_m=N_1\perp \cdots\perp N_m,\] with both
$M_i$ and $N_i$ being free $R_i$-lattices of rank $r_i$ for all
$1\leq i\leq m$, do we necessarily have
$(M_i,\langle~,~\rangle_{M_i})\simeq (N_i, \langle~,~\rangle_{N_i})$
for all $i$?  Unfortunately, the answer is negative in
general, as shown by the example below. 
\begin{ex}\label{ex:no-witt}
Take
$E=\Q_2(\sqrt{-2})$, and $R=\Z_2[2\sqrt{-2}]=\Z_2+2O_E$.  Let
$(M, \langle~,~\rangle)$ be the hermitian $R$-lattice
$Re_1\oplus O_Ee_2\subseteq E^2$ with the hermitian pairing given by
\[\langle e_1, e_1\rangle=1, \qquad \langle e_1, e_2\rangle=0, \qquad
  \langle e_2, e_2\rangle=2.\]
From Proposition~\ref{prop:restr-unimodular}, the pairing
$\langle~,~\rangle: M\times M\to R$ is perfect. 
The following are  two orthogonal decompositions of $M$: 
\[M=Re_1\perp 
O_Ee_2=R(e_1+e_2)\perp O_E(-2e_1+e_2).\]
Since $\langle e_1+e_2, e_1+e_2\rangle =3\not\in \Nm(R^\times)$, we
find that 
\[(Re_1, \langle~,~\rangle_{Re_1})\not\simeq (R(e_1+e_2), \langle~,~\rangle_{R(e_1+e_2)}).\]
\end{ex}

\section*{Acknowledgments}
The present work was prepared during the last author's visits to Wuhan University. He thanks his host Xue and Wuhan University for the warm
hospitality and great working conditions.   
Xue is partially supported by the National Natural Science Foundation of China grant No.~12271410 and No.~12331002. Yu is partially supported
by the grants NSTC 114-2115-M-001-001 and AS-IA-112-M01.   

\bibliographystyle{hplain}
\bibliography{TeXBiB}
\end{document}